\theoremstyle{plain}
\newtheorem{theorem}{Theorem}
\newtheorem{lemma}{Lemma}[section]
\newtheorem{proposition}[lemma]{Proposition}
\newtheorem{corollary}[lemma]{Corollary}
\newtheorem*{question}{Question}
\newtheorem{maincorollary}[theorem]{Corollary}
\theoremstyle{definition}
\newtheorem{definition}{Definition}[section]
\newtheorem*{condition}{Condition}
\theoremstyle{remark}
\newtheorem{remark}{Remark}[section]
\def\namedlabel#1#2{\begingroup
   #2%
 \def\@currentlabel{#2}%
   \phantomsection\label{#1}\endgroup
}
\def\nn{\ensuremath{\mathscr N}}
\def\R{\ensuremath{\mathbb R}}
\def\N{\ensuremath{\mathbb N}}
\def\Z{\ensuremath{\mathbb Z}}
\def\I{\ensuremath{{\bf 1}}}
\def\e{{\ensuremath{\rm e}}}
\def\S{\ensuremath{\mathcal S}}
\def\RR{\ensuremath{\mathcal R}}
\def\B{\ensuremath{\mathcal B}}
\def\M{\ensuremath{\mathcal M}}
\def\C{\ensuremath{\mathbb C}}
\def\l{{\rm Leb}}
\def\P{\ensuremath{\mathcal P}}
\def\p{\ensuremath{\mathbb P}}
\def\QQ{\ensuremath{\mathscr Q}}
\def\n{\ensuremath{n}}
\def\t{\ensuremath{t}}
\def\X{\mathcal{X}}
\def\ie{{\em i.e.}, }
\def\cv{\ensuremath{\text {Cor}}}
\def\o{\ensuremath{\underline{\omega}}}
\newcommand{\dif}{\mathrm{d}}
\DeclareMathOperator*{\esssup}{ess\;sup}
\DeclareMathOperator*{\essinf}{ess\;inf}
\def\dist{\ensuremath{\text{dist}}}
\def\eps{\varepsilon}
\def\V{\text{Var}}
\def\Iuc{\ensuremath{\I_{U_m^c}}}
\def\sm{\ensuremath{\mu_\varepsilon}}
\def\nm{\ensuremath{\theta^{\N}_{\varepsilon}}}
\def\Pe{\P_\varepsilon}
\numberwithin{equation}{section}
\begin{document}

\title{Laws of rare events for deterministic and random dynamical systems}

\author[H. Ayta\c{c}]{Hale Ayta\c{c}}
\address{Hale Ayta\c{c}\\ Centro de Matem\'{a}tica da Universidade do Porto\\ Rua do
Campo Alegre 687\\ 4169-007 Porto\\ Portugal}
\email{aytach@fc.up.pt}

\author[J. M. Freitas]{Jorge Milhazes Freitas}
\address{Jorge Milhazes Freitas\\ Centro de Matem\'{a}tica \& Faculdade de Ci\^encias da Universidade do Porto\\ Rua do
Campo Alegre 687\\ 4169-007 Porto\\ Portugal}
\email{jmfreita@fc.up.pt}
\urladdr{http://www.fc.up.pt/pessoas/jmfreita}

\author{ Sandro Vaienti}
\address{Sandro Vaienti\\ 
UMR-7332 Centre de Physique Th\'{e}orique, CNRS, Universit\'{e}
d'Aix-Marseille, Universit\'{e} du Sud, Toulon-Var and FRUMAM,
F\'{e}d\'{e}ration de Recherche des Unit\'{e}s des Math\'{e}matiques de Marseille,
CPT Luminy, Case 907, F-13288 Marseille CEDEX 9}
\email {vaienti@cpt.univ-mrs.fr}

\thanks{HA was partially supported by FCT (Portugal) grant SFRH/BD/33371/2008. JMF was partially supported by FCT grant SFRH/BPD/66040/2009 and by FCT project PTDC/MAT/099493/2008. HA and JMF were supported by the European Regional Development Fund through the programme COMPETE and by the Portuguese Government through the FCT and CMUP under the project PEst-C/MAT/UI0144/2011. All three authors were supported by FCT  project PTDC/MAT/120346/2010, which is financed by national and European Community structural funds through the programs  FEDER and COMPETE. SV was supported by the CNRS-PEPS {\em Mathematical Methods of Climate Theory} and by the ANR-Project {\em Perturbations}; part of this work was done while he was visiting the {\em Centro de Modelamiento Matem\'{a}tico, UMI2807}, in Santiago de Chile with a CNRS support (d\'el\'egation).}

\date{\today}

\keywords{Random dynamical systems, Extreme Values, Hitting Times Statistics, Extremal Index} \subjclass[2010]{
37A50, 60G70, 37B20, 60G10, 37A25, 37H99}


\begin{abstract}
The object of this paper is twofold. From one side we study  the dichotomy, in terms of the Extremal Index  of the possible Extreme Value Laws, when the rare events are centred around periodic or non periodic points. Then we build a general theory of Extreme Value Laws for randomly perturbed dynamical systems. We also address, in both situations, the convergence of Rare Events Point Processes. Decay of correlations against $L^1$ observables will play a central role in our investigations.
\end{abstract}

\maketitle

\tableofcontents

\section{Introduction}

Deterministic discrete dynamical systems are often used to model physical phenomena. In many situations, inevitable observation errors make it more realistic to consider random dynamics, where the mathematical model is adjusted by adding random noise to the iterative process in order to account for these practical imprecisions. The behaviour of such random systems has been studied thoroughly in the last decades. We mention, for example, \cite{K86a,KL06} for excellent expositions on the subject.

Laws of rare events for chaotic (deterministic) dynamical systems have also been exhaustively studied in the last years. When these results first appeared these notions were described as Hitting Times Statistics (HTS) or Return Times Statistics (RTS). In this setting, rare events correspond to entrances in small regions of the phase space and the goal is to prove distributional limiting  laws for the normalised waiting times before hitting/returning to  these asymptotically small sets. We refer to \cite{S09} for an excellent review. More recently, rare events have also been studied through Extreme Value Laws (EVLs), \ie the distributional limit of the partial maxima of stochastic processes arising from such chaotic systems simply by evaluating an observable function along the orbits of the system. Very recently, in \cite{FFT10,FFT11}, the two perspectives have been proved to be linked so that, under general conditions on the observable functions, the existence of HTS/RTS is equivalent to the existence of EVLs. These observable functions achieve a maximum (possibly $\infty$) at some chosen point $\zeta$ in the phase space so that the rare event of occurring an exceedance of a high level corresponds to an entrance in a small ball around $\zeta$. The study of rare events may be enhanced if we enrich the process by considering multiple exceedances (or hits/returns to target sets) that are recorded by Rare Events Point Processes (REPP), which count the number of exceedances (or hits/returns) in a certain time frame. Then one looks for limits in distribution for such REPP when time is adequately normalised. 

Surprisingly, not much is known about rare events for random dynamical systems. One of the main goals here is to establish what we believe to be the first result proving the existence of EVLs (or equivalently HTS/RTS) as well as the convergence of REPP, for randomly perturbed dynamical systems.  

We remark that in the recent paper
\cite{MR11} the authors defined the meaning of first hitting/return time in the random dynamical setting. To our knowledge this was the first paper to address this issue of recurrence for random dynamics. There, the authors define the concepts of quenched and annealed return times for systems generated by the composition of random maps. Moreover, they prove that for super-polynomially mixing systems, the random recurrence rate is equal to the local dimension of the stationary measure.

In here, we are interested in establishing the right setting in order to have the connection between EVL and HTS/RTS, for random dynamics, and, eventually, to prove the existence of EVLs and HTS/RTS for random orbits. Moreover, we also study the convergence of the REPP for randomly perturbed systems. 
These achievements are, in our opinion, the main accomplishments of this paper.


In general terms, we will consider uniformly expanding and piecewise expanding maps. Then we randomly perturb these discrete systems with additive, independent, identically distributed (i.i.d.)~noise introduced at each iteration. The noise distribution is absolutely continuous with respect to (w.r.t.)~Lebesgue measure. The details will be given in Section~\ref{sec:statement-results}.

The main ingredients will be decay of correlations against all $L^1$ observables (we mean decay of correlations of all observables in some Banach space against all observables in $L^1$, which will be made more precise in Definition~\ref{def:dc} below) and the notion of first return time from a set to itself.

We realised that the techniques we were using to study the random scenario also allowed us to give an answer to one of the questions raised in \cite{FFT12}. There the connection between periodicity, clustering of rare events and the Extremal Index (EI) was studied. In certain situations, like when rare events are defined as entrances in balls around (repelling) periodic points, the stochastic processes generated by the dynamics present clustering of rare events. The EI is a parameter $\vartheta\in[0,1]$ which quantifies the intensity of the clustering. In fact, in most situations the average cluster size is just $1/\vartheta$. No clustering means that $\vartheta=1$ and strong clustering means that $\vartheta$ is close to $0$. In  \cite[Section~6]{FFT12}, it is shown that, for uniformly expanding maps of the circle equipped with the Bernoulli measure,
there is a dichotomy in terms of the possible EVL: either the rare events are centred at (repelling) periodic points and $\vartheta<1$ or at non periodic points and the EI is $1$. This was proved for cylinders, in the sense that rare events corresponded to entrances into dynamically defined cylinders (instead of balls) and one of the questions it raised was if this dichotomy could be proved more generally for balls and for more general systems.  In \cite{FP12}, the authors build up on the work of \cite{H93} and eventually obtain the dichotomy for balls and for conformal repellers.

One of our results here, Theorem~\ref{thm:DC+R=>EVL}, allows to prove the dichotomy for balls and for systems with decay of correlations against $L^1$ which include, for example, piecewise expanding maps of the interval like Rychlik maps (Proposition~\ref{prop:Rychlik}) or piecewise expanding maps in higher dimensions, like the ones studied by Saussol, in \cite{S00}, (Proposition~\ref{prop:MPE}). Moreover, as an end product of our approach, we can express the dichotomy for these systems in the following more general terms (see Propositions~\ref{prop:Rychlik} and \ref{prop:MPE}): either we have, at non periodic points, the convergence of the REPP to the standard Poisson process or we have, at repelling periodic points, the convergence of REPP to a compound Poisson process consisting of an underlying asymptotic Poisson process governing the positions of the clusters of exceedances and a multiplicity distribution associated to each such Poisson event, which is determined by the average cluster size. In fact, at repelling periodic points, we always get that the multiplicity distribution is the geometric distribution (see \cite{HV09,FFT12a}).

We also consider discontinuity points of the map as centres of the rare events (see Proposition~\ref{prop:Rychlik.discontinuous}). A very interesting immediate consequence of this study is that, when we consider the REPP, we can obtain convergence to a compound Poisson process whose multiplicity distribution is not a geometric distribution. To our knowledge this is the first time these limits are obtained  for the general piecewise expanding systems considered and in the balls' setting (rather than cylinders), in the sense that exceedances or rare events correspond to the entrance of the orbits in topological balls.       

In the course of writing this paper we came across a paper by Keller, \cite{K12}, where he proved the dichotomy of expanding maps with a spectral gap for the corresponding Perron-Frobenius operator (which also include Rychlik maps and the higher dimensional piecewise expanding maps studied by Saussol \cite{S00}, for example). He uses of a powerful technique developed in \cite{KL09}, based on an eigenvalue perturbation formula. Our approach here is different since we use an EVL kind of argument and our assumptions are based on decay of correlations against $L^1$ observables. Moreover, in here, we also deal with the convergence of the REPP and obtain, in particular, the interesting fact that at discontinuity points we observe multiplicity distributions other than the geometric one.
 
 We also mention the very recent paper \cite{KR12}, where the dichotomy for cylinders is established for mixing countable alphabet shifts, but also in the context of nonconventional ergodic sums. It also includes examples of non-convergence of the REPP, in the cylinder setting.

We remark that in most situations, decay of correlations against $L^1$ observables is a consequence of the existence of a gap in the spectrum of the map's corresponding Perron-Frobenius operator. However, in  \cite{D98}, Dolgopyat proves exponential decay of correlations for certain Axiom A flows but along the way he proves it for semiflows against $L^1$ observables. This is done via estimates on families of twisted transfer operators for the Poincar\'e map, but without considering the Perron-Frobenius operator for the flow itself. This means that the discretisation of this flow by using a time 1 map, for example, provides an example of a system with decay of correlations against $L^1$ for which it is not known if there exists a spectral gap of the corresponding Perron-Frobenius operator. 
Apparently, the existence of a spectral gap for the map's Perron-Frobenius operator, defined in some nice function space, implies decay of correlations against $L^1$ observables. However, the latter is still a very strong property. In fact, from decay of correlations against $L^1$ observables, regardless of the rate, as long as it is summable, one can  actually  show that the system has exponential decay of correlations of H\"older observables against $L^\infty$. (See \cite[Theorem~B]{AFL11}). So an interesting question is:

\begin{question}
If a system presents summable decay of correlations against $L^1$ observables, is there a spectral gap for the system's Perron-Frobenius operator, defined in some appropriate function space?
\end{question}

We note that, as we point out in Remark~\ref{rem:L1stuff}, we do not actually need decay of correlations against $L^1$ in its full strength.

Returning to the random setting, our main result asserts that the dichotomy observed for deterministic systems vanishes and regardless of the centre being a periodic point or not, we always get standard exponential EVLs or, equivalently, standard exponential HTS/RTS (which means that $\vartheta=1$). Moreover, we also show that the REPP converges in distribution to a standard Poisson process. We will prove these results in Section~\ref{sec:random} using an EVL approach, where the main assumption will be decay of correlations against $L^1$.

Still in the random setting, motivated by the deep work of Keller, \cite{K12}, in Section~\ref{sec:Keller}, we prove  results in the same directions as before but based on the spectral approach used by Keller and Liverani to study deterministic systems. As a byproduct we get an HTS/RTS formula with sharp error terms for random dynamical systems (see Proposition~\ref{prop:hts-error-terms}). We will point out the differences between the two techniques (which we name here as {\em direct} and {\em spectral}, respectively), at the beginning of Section 5; let us simply stress that we implemented the spectral technique in random situation only for one-dimensional systems and the existence of EI was proved for a substantially large class of noises. On the other hand, the  direct technique worked for systems in higher dimensions as well, but it required additive noise with a continuous distribution. However, the latter was necessary to prove that EI is $1$ in the spectral approach too.

\section{Statement of Results}\label{sec:statement-results}

Consider a discrete time dynamical system $(\X,\B,\p,T)$ which will denote two different but interrelated settings throughout the paper. $\X$ is a topological space, $\mathcal B$ is the Borel $\sigma$-algebra, $T:\X\to\X$ is a measurable map and $\p$ is a $T$-invariant probability measure, \ie $\p(T^{-1}(B))=\p(B)$, for all $B\in \mathcal B$. Also, given any $A\in\mathcal B$ with $\p(A)>0$, let $\p_A$ denote the conditional measure on $A\in\B$, \ie $\p_A:=\frac{\p|_A}{\p(A)}$.

Firstly, it will denote a deterministic setting where $\X=\mathcal M$ is a compact Riemannian manifold, $\mathcal B$ is the Borel $\sigma$-algebra, $T=f:\M\to\M$ is a piecewise differentiable map and $\p=\mu$ is an $f$-invariant probability measure. Let $\dist(\cdot,\cdot)$ denote a Riemannian metric on $\mathcal M$ and $\l$ a normalized volume form on the Borel sets of $\mathcal M$ that we call Lebesgue measure.

 Secondly, it will denote a random setting which is constructed from the deterministic system via perturbing the original map with random additive noise. We assume that $\mathcal M$ is a quotient of a Banach vector space $\mathcal V$, like $\mathcal M=\mathbb T^d=\R^d/\Z^d$, for some $d\in\N$.  In the case $d=1$, we will also denote the circle $\mathbb T^1$ by ${\mathcal S}^1$. Let $\dist(\cdot,\cdot)$ denote the induced usual quotient metric on $\mathcal M$ and $\l$ a normalised volume form on the Borel sets of $\mathcal M$ that we call Lebesgue measure. Also denote the ball of radius $\varepsilon>0$ around $x\in \mathcal M$ by $B_\varepsilon(x):=\{y\in\mathcal M: \dist(x,y)<\varepsilon\}$. Consider the unperturbed deterministic system $f:\mathcal M\to\mathcal M$. For some $\varepsilon>0$, let $\theta_\varepsilon$ be a probability measure defined on the Borel subsets of $B_\varepsilon(0)$, such that
 \begin{equation}
 \label{eq:noise-distribution}
\theta_\varepsilon=g_\varepsilon\l\quad\mbox{and} \quad 0<\underline{g_\varepsilon}\leq g_\varepsilon\leq\overline{g_\varepsilon}<\infty.
 \end{equation}
 For each $\omega\in B_\varepsilon(0)$, we define the additive
 perturbation of $f$ that we denote by $f_\omega$ as the map
 $f_\omega:\mathcal M\to\mathcal M$, given by \footnote{In the general theory of
 randomly perturbed dynamical systems one could
 consider perturbations other than the additive ones and
 distributions $\theta_{\eps}$ which are not necessarily absolutely
 continuous. Our choice is motivated by the fact that our main
 result for the extreme values in presence of noise could be relatively easily
 showed with those assumptions, but it is also clear from the proof
 where possible generalizations could occur. We were especially
 concerned in constructing the framework and in finding the good
 assumptions for the theory, which is surely satisfied for more
 general perturbations and probability distributions. Let us notice
 that other authors basically used additive noise when they studied
 statistical properties of random dynamical systems \cite{BBM02,BBM03, AA03}, for instance.}
\begin{equation}
\label{eq:add-perturbation}
f_{\omega}(x)=f(x)+\omega. 
\end{equation}
Consider a sequence of i.i.d.\ random variables
(r.v.)~ $W_1, W_2,\ldots$ taking values on $B_\varepsilon(0)$
with common distribution given by $\theta_{\varepsilon}$.
 Let $\Omega=B_\varepsilon(0)^\N$ denote the space of realisations of such process and $\theta_\varepsilon^\N$ the product measure defined on its Borel subsets. Given a point $x\in\mathcal M$ and the realisation of the stochastic process  $\o=(\omega_1,\omega_2,\ldots)\in\Omega$, we define the random orbit of $x$ as $x, f_{\o}(x), f^2_{\o}(x),\ldots$ where, the evolution of $x$, up to time $n\in\N$, is obtained by the concatenation of the respective additive randomly perturbed maps in the following way:
\begin{equation}
\label{eq:concatenated}
f_{\o}^n(x)=f_{\omega_n}\circ f_{\omega_{n-1}}\circ\cdots\circ f_{\omega_1}(x),
\end{equation}
with $f_{\o}^0$ being the identity map on $\mathcal M$.
Next definition gives a notion that plays the role of invariance in the deterministic setting.
\begin{definition}
\label{def:stationary-measure}
Given $\varepsilon>0$, we say that the probability measure $\mu_\varepsilon$ on the Borel subsets of $\mathcal M$ is stationary if
\[
\iint \phi(f_{\omega}(x)) \,\dif\mu_\varepsilon(x)\,\dif\theta_\varepsilon(\omega)=\int \phi(x)\,\dif\mu_\varepsilon(x),
\]
for every $\phi:\mathcal M\to\R$ integrable w.r.t.\ $\mu_\varepsilon$.
\end{definition}
The previous equality could also be written as
$$
\int {\mathcal U}_{\eps}\phi\,\dif\mu_{\eps}=\int \phi\,\dif\mu_{\eps}
$$
where the operator ${\mathcal U}_{\eps}: L^{\infty}(\l)\rightarrow
L^{\infty}(\l)$, 
is defined as $({\mathcal U}_{\eps}\phi)(x)=\int
_{B_{\eps}(0)}\phi(f_{\omega}(x))\,\dif\theta _{\eps}$ and it is called
the {\em random evolution operator}.\\ The adjoint of this operator
is called the {\em random Perron-Frobenius operator}, ${\mathcal
P}_{\eps}: L^1(\l)\rightarrow L^1(\l)$, 
 and it acts by duality as
$$
\int {\mathcal P}_{\eps}\psi\cdot\phi\, \dif\l = \int {\mathcal
U}_{\eps}\phi\cdot\psi\, \dif\l
$$
where $\psi\in L^1$ and $\phi \in L^{\infty}$.\\ It is immediate
from this definition to get another useful representation of this
operator, namely for $\psi\in L^1$: $$ ({\mathcal
P}_{\eps}\psi)(x)=\int_{B_{\eps}(0)}({\mathcal P}_{\omega}\psi)(x)
\,\dif\theta_{\eps}(\omega),$$ where ${\mathcal P}_{\omega}$ is the
Perron-Frobenius operator associated to $f_{\omega}$.\\
We recall that the stationary measure $\mu_{\eps}$ is absolutely
continuous w.r.t.\ the Lebesgue measure and with density $h_{\eps}$
if and only if such a density is a fixed point of the random
Perron-Frobenius operator: ${\mathcal P}_{\eps}h_{\eps}=h_{\eps}$ \footnote{The duality explains why we take ${\mathcal
P}_{\eps}$ acting on $L^1$ and ${\mathcal U}_{\eps}$ on $L^{\infty}$. Moreover our stationary measures will be absolutely continuous with density given by the fixed point of the Perron-Frobenius operator ${\mathcal P}_{\eps}$. }

We can give a deterministic representation of this random setting using the following skew product transformation:
\begin{equation} \label{def:skew-product}S:
\begin{array}[t]{ccc}
\mathcal M\times \Omega & \longrightarrow & \mathcal M\times \Omega\\
(x,\o)& \longmapsto & (f_{\omega_1},\sigma(\o)),
\end{array}
\end{equation}
where $\sigma:\Omega\to\Omega$ is the one-sided shift $\sigma(\o)=\sigma(\omega_1,\omega_2,\ldots)=(\omega_2, \omega_3, \ldots)$.
We remark that $\mu_\varepsilon$ is stationary if and only if the product measure $\mu_\varepsilon\times \theta_\varepsilon^\N$ is an $S$-invariant measure.

Hence, the random evolution can fit the original model $(\X,\mathcal B, \p, T)$ by taking the product space
$\X=\mathcal M\times \Omega$, with the corresponding product Borel $\sigma$-algebra $\mathcal B$,
 where the product measure $\p=\sm\times\theta_\epsilon^\N$ is defined. The system is then given by the skew product map $T=S$.

For systems we will consider, $\p$ has very good mixing properties, which in loose terms means that the system loses memory quite fast. In order to quantify the memory loss we look at the system's rates of decay of correlations w.r.t.\ $\p$.

\begin{definition}[Decay of correlations]
\label{def:dc}
Let \( \mathcal C_{1}, \mathcal C_{2} \) denote Banach spaces of real valued measurable functions defined on \( \X \).
We denote the \emph{correlation} of non-zero functions $\phi\in \mathcal C_{1}$ and  \( \psi\in \mathcal C_{2} \) w.r.t.\ a measure $\p$ as
\[
\cv_\p(\phi,\psi,n):=\frac{1}{\|\phi\|_{\mathcal C_{1}}\|\psi\|_{\mathcal C_{2}}}
\left|\int \phi\, (\psi\circ T^n)\, \dif\p-\int  \phi\, \dif\p\int
\psi\, \dif\p\right|.
\]

We say that we have \emph{decay
of correlations}, w.r.t.\ the measure $\p$, for observables in $\mathcal C_1$ \emph{against}
observables in $\mathcal C_2$ if, for every $\phi\in\mathcal C_1$ and every
$\psi\in\mathcal C_2$ we have
 $$\cv_\p(\phi,\psi,n)\to 0,\quad\text{ as $n\to\infty$.}$$
  \end{definition}

In the random setting, we will only be interested in Banach spaces of functions that do not depend on $\o\in\Omega$, hence, we assume that $\phi,\psi$ are actually functions defined on $\mathcal M$ and the correlation between these two observables can be written more simply as
\begin{align}
\cv_\p(\phi,\psi, n):&=\frac{1}{\|\phi\|_{\mathcal
C_{1}}\|\psi\|_{\mathcal C_{2}}} \left|\int \left(\int \psi\circ
f^n_{\o}\, \dif\theta_{\varepsilon}^{\N}\right)\phi\, \dif\sm-\int  \phi\,
\dif\sm\int \psi\, \dif\sm\right|\nonumber\\
&=\frac{1}{\|\phi\|_{\mathcal C_{1}}\|\psi\|_{\mathcal C_{2}}}
\left|\int {\mathcal U}_{\eps}^n\psi\cdot\phi\, \dif\mu_{\eps}-\int  \phi\,
\dif\sm\int \psi\, \dif\sm\right| \label{eq:cor-random}
\end{align}
where $(\mathcal{U}^n_{\varepsilon} \psi)(x)=\int\cdots\int
\psi(f_{\omega_n}\circ\cdots\circ f_{\omega_1}x)\,
\dif\theta_\varepsilon (\omega_n)\ldots \dif\theta_\varepsilon
(\omega_1)=\int \psi\circ f^n_{\o}(x)\, \dif\theta_{\varepsilon}^{\N}.$\\

We say that we have \emph{decay of correlations against $L^1$
observables} whenever  we have decay of correlations, with respect
to the measure $\p$, for observables in $\mathcal C_1$ against
observables in $\mathcal C_2$ and $\mathcal C_2=L^1(\l)$ is the
space of $\l$-integrable functions on $\mathcal M$ and
$\|\psi\|_{\mathcal C_{2}}=\|\psi\|_1=\int |\psi|\,\dif\l$. Note that
when $\mu$, $\mu_\varepsilon$ are absolutely continuous with respect
to $\l$ and the respective Radon-Nikodym derivatives are bounded
above and below by positive constants, then
$L^1(\l)=L^1(\mu)=L^1(\mu_\varepsilon)$.

The goal is to study the statistical properties of such systems regarding the occurrence of rare events. There are two approaches for this purpose which were recently proved to be equivalent.

We first turn to the existence of an EVL for the partial maximum of observations made along the time evolution of the system. To be more precise consider the time series $X_0,X_1,X_2,\dots$ arising from such a system simply by evaluating a given random variable (r.v.) $\varphi:\mathcal M\to\R\cup\{+\infty\}$ along the orbits of the system:
\begin{equation}
\label{eq:def-stat-stoch-proc-DS} X_n=\varphi\circ f^n,\quad \mbox{for
each } n\in {\mathbb N}.
\end{equation}
Note that when we consider the random dynamics, the process will be
\begin{equation}
\label{eq:def-rand-stat-stoch-proc-RDS2} X_n=\varphi\circ f^n_{\o},\quad \mbox{for
each } n\in {\mathbb N},
\end{equation}
which can also be written as $X_n=\bar\varphi\circ S^n$, where \begin{equation} \label{def:random-observable}\bar\varphi:
\begin{array}[t]{ccc}
\mathcal M\times \Omega & \longrightarrow & \R\cup\{+\infty\}\\
(x,\o)& \longmapsto & \varphi(x)
\end{array},
\end{equation}
Clearly, $X_0,X_1,\dots$ defined in this way is not an independent sequence.  However, invariance of $\mu$ and stationarity of $\mu_\varepsilon$ guarantee that the stochastic process is stationary in both cases.

We assume that the r.v.\ $\varphi:\M\to\R\cup\{\pm\infty\}$
achieves a global maximum at $\zeta\in \M$ (we allow
$\varphi(\zeta)=+\infty$). 
We also assume that $\varphi$ and $\p$ are sufficiently regular so that:

\begin{enumerate}

\item[\namedlabel{item:U-ball}{(R1)}] 
for $u$ sufficiently close to $u_F:=\varphi(\zeta)$,  the event 
\begin{equation*}
U(u)=\{X_0>u\}=\{x\in\M:\; \varphi(x)>u\}
\end{equation*} corresponds to a topological ball centred at $\zeta$. Moreover, the quantity $\p(U(u))$, as a function of $u$, varies continuously on a neighbourhood of $u_F$.

\end{enumerate}

In what follows, an \emph{exceedance} of the level $u\in\R$ at time $j\in\N$ means that the event $\{X_j>u\}$ occurs. We denote by $F$ the distribution function (d.f.)~of $X_0$, \ie $F(x)=\p(X_0\leq x)$. Given any d.f.\ $G$, let $\bar{G}=1-G$ and $u_G$ denote the right endpoint of the d.f.\ $G$, \ie
$
u_G=\sup\{x: G(x)<1\}.
$

The idea then is to consider the extremal behaviour of the system for which we define a new sequence of random variables  $M_1, M_2,\ldots$ given by
\begin{equation}
\label{eq:Mn-definition}
M_n=\max\{X_0,\ldots,X_{n-1}\}.
\end{equation}

\begin{definition}
We say that we have an \emph{EVL} for $M_n$ if there is a non-degenerate d.f.\ $H:\R\to[0,1]$ with $H(0)=0$ and,  for every $\tau>0$, there exists a sequence of levels $u_n=u_n(\tau)$, $n=1,2,\ldots$,  such that
\begin{equation}
\label{eq:un}
  n\,\p(X_0>u_n)\to \tau,\;\mbox{ as $n\to\infty$,}
\end{equation}
and for which the following holds:
\begin{equation}
\label{eq:EVL-law}
\p(M_n\leq u_n)\to \bar H(\tau),\;\mbox{ as $n\to\infty$.}
\end{equation}
\end{definition}


\begin{remark}
\label{rem:advantage-EVL}
We remark that one of the advantages of the EVL approach for the study of rare events for random dynamics is that its definition follows straightforwardly from the deterministic case. In fact, the only difference is that for random dynamical systems, the r.v.\ $M_n$'s are defined on $\mathcal M\times \Omega$ where we use the measure $\p=\mu_\varepsilon\times\theta_\varepsilon^\N$ as opposed to the deterministic case where the ambient space is $\mathcal M$ and $\p=\mu$.
\end{remark}
The motivation for using a normalising sequence $u_n$ satisfying \eqref{eq:un} comes from the case when $X_0, X_1,\ldots$ are independent and identically distributed. In this i.i.d.\ setting, it is clear that $\p(M_n\leq u)= (F(u))^n$. Hence, condition \eqref{eq:un} implies that
\[
\p(M_n\leq u_n)= (1-\p(X_0>u_n))^n\sim\left(1-\frac\tau n\right)^n\to\e^{-\tau},
\]
as $n\to\infty$. Moreover, the reciprocal is also true. Note that in this case $H(\tau)=1-\e^{-\tau}$ is the standard exponential d.f.

For every sequence $(u_n)_{n\in\N}$ satisfying \eqref{eq:un} we define:
\begin{equation}
\label{def:Un}
U_n:=\{X_0>u_n\}.
\end{equation} 

When $X_0,X_1,X_2,\ldots$ are not independent, the standard exponential law still applies under some conditions on the dependence structure. These conditions are the following:
\begin{condition}[$D_2(u_n)$]\label{cond:D2} We say that $D_2(u_n)$ holds for the sequence $X_0,X_1,\ldots$ if for all $\ell,t$
and $n$
\begin{align*}
|\p\left(X_0>u_n\cap
  \max\{X_{t},\ldots,X_{t+\ell-1}\leq u_n\}\right)-\p(X_0>u_n)
  \p(M_{\ell}\leq u_n)|\leq \gamma(n,t),
\end{align*}
where $\gamma(n,t)$ is decreasing in $t$ for each $n$ and
$n\gamma(n,t_n)\to0$ when $n\rightarrow\infty$ for some sequence
$t_n=o(n)$.
\end{condition}
Now, let $(k_n)_{n\in\N}$ be a sequence of integers such that
\begin{equation}
\label{eq:kn-sequence-1}
k_n\to\infty\quad \mbox{and}\quad  k_n t_n = o(n).
\end{equation}
\begin{condition}[$D'(u_n)$]\label{cond:D'} We say that $D'(u_n)$
holds for the sequence $X_0, X_1, X_2, \ldots$ if there exists a sequence $(k_n)_{n\in\N}$ satisfying \eqref{eq:kn-sequence-1} and such that
\begin{equation}
\label{eq:D'un}
\lim_{n\rightarrow\infty}\,n\sum_{j=1}^{\lfloor n/k_n \rfloor}\p( X_0>u_n,X_j>u_n)=0.
\end{equation}
\end{condition}
By \cite[Theorem~1]{FF08a}, if conditions $D_2(u_n)$ and $D'(u_n)$ hold for $X_0, X_1,\ldots$ then there exists an EVL for $M_n$ and $H(\tau)=1-\e^{-\tau}$. Besides, as it can be seen in  \cite[Section~2]{FF08a} condition  $D_2(u_n)$ follows immediately if $X_0, X_1,\ldots$ is given by \eqref{eq:def-stat-stoch-proc-DS} and the system has sufficiently fast decay of correlations.

Now,  we turn to the other approach which regards the existence of HTS and RTS. In the deterministic case,
consider a set $A\in\B$. We define a function that we refer to as \emph{first hitting time function} to $A$ and denote by $r_A:\X\to\N\cup\{+\infty\}$ where
\begin{equation*}
r_A(x)=\min\left\{j\in\N\cup\{+\infty\}:\; f^j(x)\in A\right\}.
\end{equation*}
The restriction of $r_A$ to $A$ is called the \emph{first return time function} to $A$. We define the \emph{first return time} to $A$, which we denote by $R(A)$, as the minimum of the return time function to $A$, \ie
\[
R(A)=\min_{x\in A} r_A(x).
\]

In the random case, we have to a make choice regarding the type of definition we want to play the roles of the first hitting/return times (functions). Essentially, there are two possibilities. The \emph{quenched} perspective which consists of fixing a realisation $\o\in\Omega$ and define the objects in the same way as in the deterministic case. The \emph{annealed} perspective consists of defining the same objects by averaging over all possible realisations $\o$. Here, we will use the quenched perspective to define hitting/return times because it will facilitate the connection between EVLs and HTS/RTS in the random setting. (We refer to \cite{MR11} for more details on both perspectives.)

For some $\o\in\Omega$ fixed, some $x\in \M$ and $A\subset \M$ measurable, we may define the \emph{first random hitting time}
\begin{equation*}
\label{eq:rht}
r_A^{\o}(x):=\min\{j\in\N:\; f_{\o}^j(x)\in A\}
\end{equation*}
and the \emph{first random return} from A to A as
\begin{equation*}
\label{eq:rrt}
R^{\o}(A)=\min\{r_A^{\o}(x):\; x\in A\}.
\end{equation*}
\begin{definition}
\label{def:HTS/RTS}
Given a sequence of subsets of $\X$, $(V_n)_{n\in \N}$, so that
$\p(V_n)\to 0$, the system has (random) \emph{HTS}
$G$ for $(V_n)_{n\in \N}$ if for all $t\ge 0$
\begin{equation}\label{eq:def-HTS-law}
 \p\left(r_{V_n}\leq\frac t{\p(V_n)}\right)\to G(t) \;\mbox{ as $n\to\infty$,}
\end{equation}
and the system has (random) \emph{RTS}
$\tilde G$ for $(V_n)_{n\in \N}$ if for all $t\ge 0$
\begin{equation}\label{eq:def-RTS-law}
\p_{V_n}\left(r_{V_n}\leq\frac t{\p(V_n)}\right)\to\tilde G (t)\;\mbox{ as $n\to\infty$}.
\end{equation}
In the deterministic case, $\X=\mathcal M$, $\p=\mu$ and $T=f$. In the random case, $\X=\mathcal M\times\Omega$, $\p=\mu_\varepsilon\times\theta_\varepsilon^\N$, $T=S$ defined in \eqref{def:skew-product}, $V_n=V_n^*\times\Omega$, where $V_n^*\subset \mathcal M$ and $\mu_\varepsilon(V_n^*)\to 0$, as $n\to \infty$.
\end{definition}
Note that
\[
 \p\left(r_{V_n}\leq\frac t{\p(V_n)}\right)=\mu_\varepsilon\times\theta_\varepsilon^\N\left(r^{\o}_{V_n^*}\leq\frac t{\mu_{\varepsilon}(V_n^*)}\right).
\]

The normalising sequences to obtain HTS/RTS, are motivated by Kac's Lemma, which states that the expected value of $r_A$ w.r.t.\ $\mu_A$ is  $\int_A r_A~d\mu_A =1/\mu(A)$.  So in studying the fluctuations of $r_A$ on $A$, the relevant normalising factor should be $1/\mu(A)$.

The existence of exponential HTS is equivalent to the existence of exponential RTS. In fact, according to the Main Theorem in \cite{HLV05}, a system has HTS $G$ if and only if it has RTS $\tilde G$ and
\begin{equation}
\label{eq:HTS-RTS}
G(t)=\int_0^t(1-\tilde G(s))\,\dif s.
\end{equation}

In \cite{FFT10}, the link between HTS/RTS (for balls) and  EVLs of stochastic processes given by \eqref{eq:def-stat-stoch-proc-DS} was established for invariant measures $\mu$ absolutely continuous w.r.t.\ $\l$. Essentially, it was proved that if such time series have an EVL $H$ then the system has HTS $H$ for balls ``centred'' at $\zeta$ and vice versa. 
(Recall that having HTS $H$ is equivalent to saying that the system has RTS $\tilde H$, where $H$ and $\tilde H$ are related by \eqref{eq:HTS-RTS}). This was based on the elementary observation that for stochastic processes given by \eqref{eq:def-stat-stoch-proc-DS} we have:
\begin{equation}
\label{eq:rel-Mn-r}
\{M_n\leq u\}=\{r_{\{X_0>u\}}>n\}.
\end{equation}
This connection was exploited to prove EVLs using tools from HTS/RTS and the other way around. In  \cite{FFT11}, we carried the connection further to include more general measures, which, in particular, allows us to obtain the connection in the random setting. To check that we just need to use the skew product map to look at the random setting as a deterministic system and take the observable $\bar\varphi:\mathcal M\times\Omega\to\R\cup\{+\infty\}$ defined as in \eqref{def:random-observable} with $\varphi:\mathcal M\to\R\cup\{+\infty\}$ as in \cite[equation (4.1)]{FFT11}. Then Theorems~1 and 2 from \cite{FFT11} guarantee that if we have an EVL, in the sense that \eqref{eq:EVL-law} holds for some d.f.\ $H$, then we have HTS for sequences $\{V_n\}_{n\in\N}$, where $V_n=B_{\delta_n}\times \Omega$ and $\delta_n\to0$ as $n\to\infty$, with $G=H$ and vice-versa.

If we consider multiple exceedances we are lead to point processes of rare events counting the number of exceedances in a certain time frame. For every $A\subset\R$ we define 
\[
\nn_u(A):=\sum_{i\in A\cap\N_0}\I_{X_i>u}.
\]
In the particular case where $A=I=[a,b)$ we simply write 
$\nn_{u,a}^b:=\nn_u([a,b)).$ 
Observe that $\nn_{u,0}^n$ counts the number of exceedances amongst the first $n$ observations of the process $X_0,X_1,\ldots,X_n$ or, in other words, the number of entrances in $U(u)$ up to time $n$. Also, note that
\begin{equation}
\label{eq:rel-HTS-EVL-pp}
\{\nn_{u,0}^n=0\}=\{M_n\leq u\}
\end{equation}

In order to define a point process that captures the essence of an EVL and HTS through \eqref{eq:rel-HTS-EVL-pp}, we need to re-scale time using the factor $v:=1/\p(X>u)$ given by Kac's Theorem. However, before we give the definition, we need some formalism. Let $\S$ denote the semi-ring of subsets of  $\R_0^+$ whose elements
are intervals of the type $[a,b)$, for $a,b\in\R_0^+$. Let $\RR$
denote the ring generated by $\S$. Recall that for every $J\in\RR$
there are $k\in\N$ and $k$ intervals $I_1,\ldots,I_k\in\S$ such that
$J=\cup_{i=1}^k I_j$. In order to fix notation, let
$a_j,b_j\in\R_0^+$ be such that $I_j=[a_j,b_j)\in\S$. For
$I=[a,b)\in\S$ and $\alpha\in \R$, we denote $\alpha I:=[\alpha
a,\alpha b)$ and $I+\alpha:=[a+\alpha,b+\alpha)$. Similarly, for
$J\in\RR$ define $\alpha J:=\alpha I_1\cup\cdots\cup \alpha I_k$ and
$J+\alpha:=(I_1+\alpha)\cup\cdots\cup (I_k+\alpha)$.

\begin{definition}
We define the \emph{rare event point process} (REPP) by
counting the number of exceedances (or hits to $U(u_n)$) during the (re-scaled) time period $v_nJ\in\RR$, where $J\in\RR$. To be more precise, for every $J\in\RR$, set
\begin{equation}
\label{eq:def-REPP} N_n(J):=\nn_{u_n}(v_nJ)=\sum_{j\in v_nJ\cap\N_0}\I_{X_j>u_n}.
\end{equation}
\end{definition}

Under similar dependence conditions to the ones just seen above, the REPP just defined converges in distribution to a standard Poisson process, when no clustering is involved and to a compound Poisson process with intensity $\theta$ and a geometric multiplicity d.f., otherwise. For completeness, we define here what we mean by a Poisson and a compound Poisson process. (See \cite{K86} for more details.)

\begin{definition}
\label{def:compound-poisson-process}
Let $T_1, T_2,\ldots$ be  an i.i.d.\ sequence of random variables with common exponential distribution of mean $1/\theta$. Let  $D_1, D_2, \ldots$ be another i.i.d.\ sequence of random variables, independent of the previous one, and with d.f.\ $\pi$. Given these sequences, for $J\in\RR$, set
$$
N(J)=\int \I_J\;d\left(\sum_{i=1}^\infty D_i \delta_{T_1+\ldots+T_i}\right),
$$ 
where $\delta_t$ denotes the Dirac measure at $t>0$.  Whenever we are in this setting, we say that $N$ is a compound Poisson process of intensity $\theta$ and multiplicity d.f.\ $\pi$.
\end{definition}
\begin{remark}
\label{rem:poisson-process}
In this paper, the multiplicity will always be integer valued which means that $\pi$ is completely defined by the values $\pi_k=\p(D_1=k)$, for every $k\in\N_0$. Note that, if $\pi_1=1$ and $\theta=1$, then $N$ is the standard Poisson process and, for every $t>0$, the random variable $N([0,t))$ has a Poisson distribution of mean $t$. 
\end{remark}

\begin{remark}
\label{rem:compound-poisson}
When clustering is involved, we will see that $\pi$ is actually a geometric distribution of parameter $\theta\in (0,1]$,  \ie $\pi_k=\theta(1-\theta)^k$, for every $k\in\N_0$. This means that, as in \cite{HV09}, here, the random variable $N([0,t))$ follows a P\'olya-Aeppli distribution, \emph{i.e.}:
$$
\p(N([0,t))=k)=\e^{-\theta t}\sum_{j=1}^k \theta^j(1-\theta)^{k-j}\frac{(\theta t)^j}{j!}\binom{k-1}{j-1},
$$
for all $k\in\N$ and $\p(N([0,t))=0)=\e^{-\theta t}$. 
\end{remark}

When $D'(u_n)$ holds, since there is no clustering, then, due to a criterion proposed by Kallenberg \cite[Theorem~4.7]{K86}, which applies only to simple point processes, without multiple events, we can simply adjust condition $D_2(u_n)$ to this scenario of multiple exceedances in order to prove that the REPP converges in distribution to a standard Poisson process. We denote this adapted condition by:
\begin{condition}[$D_3(u_n)$]\label{cond:D^*} Let $A\in\RR$ and $t\in\N$.
We say that $D_3(u_n)$ holds for the sequence
$X_0,X_1,\ldots$ if
\[ \left|\p\left(\{X_0>u_n\}\cap
  \{\nn(A+t)=0\}\right)-\p(\{X_0>u_n\})
  \p(\nn(A)=0)\right|\leq \gamma(n,t),
\]
where $\gamma(n,t)$ is nonincreasing in $t$ for each $n$ and
$n\gamma(n,t_n)\to0$ as $n\rightarrow\infty$ for some sequence
$t_n=o(n)$, which means that $t_n/n\to0$ as $n\to \infty$.
\end{condition}
Condition $D_3(u_n)$ follows, as easily as $D_2(u_n)$, from sufficiently fast decay of correlations. 

In \cite[Theorem~5]{FFT10}  a strengthening of \cite[Theorem~1]{FF08a} is proved, which essentially says that, under $D_3(u_n)$ and $D'(u_n)$, the REPP  $N_n$ defined in \eqref{eq:def-REPP} converges in distribution to a standard Poisson process.

Next, we give an abstract result, in the deterministic setting, that allows to check conditions $D_2(u_n)$ and $D'(u_n)$ for any stochastic process $X_0,X_1,\dots$ arising from a system which has decay of correlations against $L^1$ observables. As a consequence of this result in Section~\ref{sec:dichotomy}, more precisely in Propositions \ref{prop:Rychlik}, \ref{prop:MPE} and \ref{prop:Rychlik.discontinuous}, we will obtain the announced dichotomy for the EI based on the periodicity of the point $\zeta$.

\begin{theorem}
\label{thm:DC+R=>EVL}
Consider a dynamical system $(\mathcal M,\B,\mu,f)$ for which there exists 
a Banach space $\mathcal C$ of real valued functions such that 
for all $\phi\in\mathcal C$ and $\psi\in L^1(\mu)$,
\begin{equation}
\label{DC:L1}
\cv_\mu(\phi,\,\psi,n)\leq  Cn^{-2},
\end{equation} where $C>0$ is a constant independent of both $\phi, \psi$. Let $X_0, X_1, \ldots$ be given by \eqref{eq:def-stat-stoch-proc-DS}, where $\varphi$ achieves a global maximum at some point $\zeta$ for which condition \ref{item:U-ball} 
 holds.  Let $u_n$ be such that \eqref{eq:un} holds,  $U_n$ be defined as in \eqref{def:Un} and set $R_n:=R(U_n)$.
%
%
%

 If there exists $C'>0$ such that for all $n$ we have $\I_{U_n}\in \mathcal C$, $\|\I_{U_n}\|_{\mathcal C}\leq C'$ and $R_n\to\infty$, as $n\to\infty$, then conditions $D_2(u_n)$ and $D'(u_n)$ hold for $X_0,X_1,\ldots$. This implies that there is an EVL for $M_n$ defined in \eqref{eq:Mn-definition}   and  $H(\tau)=1-\e^{-\tau}$.
 \end{theorem}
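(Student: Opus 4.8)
The plan is to derive the two dependence conditions $D_2(u_n)$ and $D'(u_n)$ directly from the decay of correlations hypothesis \eqref{DC:L1}, after which the conclusion is immediate from \cite[Theorem~1]{FF08a}. I would begin by fixing auxiliary sequences to be used throughout: take $t_n=\lfloor n^{3/4}\rfloor$, so that $t_n=o(n)$ and $n\,t_n^{-2}\to0$, and $k_n=\lfloor n^{1/8}\rfloor$, so that $k_n\to\infty$ and $k_nt_n=o(n)$, i.e.\ \eqref{eq:kn-sequence-1} holds. The same two sequences will serve for both conditions.

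\emph{Step 1: checking $D_2(u_n)$.} Since $X_{t+i}=(\varphi\circ f^i)\circ f^t$, the event $\{\max\{X_t,\dots,X_{t+\ell-1}\}\le u_n\}$ coincides with $f^{-t}\{M_\ell\le u_n\}$, so the quantity appearing in $D_2(u_n)$ equals $\bigl|\int \I_{U_n}\cdot(\I_{\{M_\ell\le u_n\}}\circ f^t)\,\dif\mu-\mu(U_n)\,\mu(M_\ell\le u_n)\bigr|$. I would apply \eqref{DC:L1} with $\phi=\I_{U_n}\in\mathcal C$ (using $\|\I_{U_n}\|_{\mathcal C}\le C'$) and $\psi=\I_{\{M_\ell\le u_n\}}\in L^1(\mu)$ (using $\|\psi\|_{L^1(\mu)}\le 1$) to bound it by $\gamma(n,t):=CC't^{-2}$, which does not depend on $\ell$, is decreasing in $t$, and satisfies $n\gamma(n,t_n)=CC'\,n\,t_n^{-2}\to0$. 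Hence $D_2(u_n)$ holds.

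\emph{Step 2: checking $D'(u_n)$.} This is where $R_n=R(U_n)\to\infty$ enters decisively: by the very definition of $R_n$, for every $1\le j<R_n$ we have $U_n\cap f^{-j}U_n=\es$, hence $\p(X_0>u_n,X_j>u_n)=0$. For $j\ge R_n$ I would again invoke \eqref{DC:L1}, now with $\phi=\psi=\I_{U_n}$, obtaining $\p(X_0>u_n,X_j>u_n)\le\mu(U_n)^2+CC'\mu(U_n)\,j^{-2}$. Summing and using the elementary tail bound $\sum_{j\ge R_n}j^{-2}\le(R_n-1)^{-1}$ gives
\[
n\sum_{j=1}^{\lfloor n/k_n\rfloor}\p(X_0>u_n,X_j>u_n)\le\frac{\bigl(n\,\mu(U_n)\bigr)^2}{k_n}+\frac{CC'\,n\,\mu(U_n)}{R_n-1}.
\]
Since $n\,\mu(U_n)=n\,\p(X_0>u_n)\to\tau$ by \eqref{eq:un}, and since $k_n\to\infty$ and $R_n\to\infty$, both summands vanish in the limit, so \eqref{eq:D'un} holds.

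\emph{Conclusion and main obstacle.} With $D_2(u_n)$ and $D'(u_n)$ in place, \cite[Theorem~1]{FF08a} yields the EVL for $M_n$ with $H(\tau)=1-\e^{-\tau}$. The single point that really requires care — and the reason the first-return-time assumption is indispensable rather than cosmetic — is the sum in Step~2: decay of correlations gives no information for small lags $j$, where the pair probabilities $\p(X_0>u_n,X_j>u_n)$ could a priori be of order $\mu(U_n)$ and make the whole sum of order $\tau$ instead of $o(1)$; it is precisely the identical vanishing of these terms for $j<R_n$ that saves the estimate. Everything else (the choice of $t_n$, $k_n$, the norm bound $\|\I_{U_n}\|_{\mathcal C}\le C'$, and the $\sum j^{-2}$ tail) is routine bookkeeping.
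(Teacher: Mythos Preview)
Your proof is correct and follows essentially the same route as the paper's: apply \eqref{DC:L1} with $\phi=\I_{U_n}$ and $\psi=\I_{\{M_\ell\le u_n\}}$ to obtain $D_2(u_n)$ with $\gamma(n,t)=CC't^{-2}$, then for $D'(u_n)$ use $R_n$ to kill the short-range terms and \eqref{DC:L1} with $\phi=\psi=\I_{U_n}$ for the remaining ones, splitting the resulting bound into a $(n\mu(U_n))^2/k_n$ piece and a $\sum_{j\ge R_n}j^{-2}$ tail. The only differences are cosmetic: the paper takes $t_n=n^{2/3}$ rather than your $n^{3/4}$, does not fix $k_n$ explicitly, and simply writes $\sum_{j=R_n}^\infty j^{-2}\to0$ instead of your explicit bound $(R_n-1)^{-1}$.
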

 In light of the connection between EVLs and HTS/RTS it follows immediately:
 \begin{maincorollary}
 \label{for:DC+R=>HTS} Under the same hypothesis of Theorem~\ref{thm:DC+R=>EVL} we have HTS/RTS for balls around $\zeta$ with $G(t)=\tilde G(t)=1-\e^{\t}$.
 \end{maincorollary}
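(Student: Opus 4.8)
The plan is to deduce this corollary from Theorem~\ref{thm:DC+R=>EVL} by invoking the already-established equivalence between Extreme Value Laws and Hitting Time Statistics for balls, followed by the equivalence between HTS and RTS of \cite{HLV05}. There is essentially no new analytic content here: the substance is all in Theorem~\ref{thm:DC+R=>EVL}, and the corollary is a bookkeeping exercise with the dictionary relating the two approaches.

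First I would record that Theorem~\ref{thm:DC+R=>EVL} gives, under the stated hypotheses, an EVL for $M_n$ with $H(\tau)=1-\e^{-\tau}$; that is, \eqref{eq:EVL-law} holds with $\bar H(\tau)=\e^{-\tau}$ along every sequence $u_n$ satisfying \eqref{eq:un}. Next I would pass from this to HTS for balls centred at $\zeta$. By condition~\ref{item:U-ball}, for $u$ close to $u_F$ the exceedance set $U(u)=\{X_0>u\}$ is a topological ball centred at $\zeta$ and $u\mapsto\p(U(u))$ is continuous near $u_F$, so the sets $U_n$ defined in \eqref{def:Un} form a sequence of balls centred at $\zeta$ with $\p(U_n)\to0$; conversely, any sequence of balls shrinking to $\zeta$ arises this way for a suitable choice of levels and of (radially monotone) observable $\varphi$. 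Combining the EVL with the elementary identity \eqref{eq:rel-Mn-r}, $\{M_n\le u_n\}=\{r_{U_n}>n\}$, and with $n\sim\tau/\p(U_n)$ from \eqref{eq:un}, one reads off $\p\bigl(r_{U_n}\le t/\p(U_n)\bigr)\to1-\e^{-t}$ for each $t>0$. Since $\mu$ is absolutely continuous with respect to $\l$, this is precisely the implication ``EVL $H$ $\Rightarrow$ HTS $G=H$ for balls shrinking to $\zeta$'' furnished by Theorems~1 and~2 of \cite{FFT11} (see also \cite[Theorems~1 and~2]{FFT10}). Hence the system has HTS for balls around $\zeta$ with $G(t)=1-\e^{-t}$.

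Finally I would invoke the Main Theorem of \cite{HLV05}: a system has HTS $G$ if and only if it has RTS $\tilde G$, with $G$ and $\tilde G$ linked by \eqref{eq:HTS-RTS}. Differentiating $G(t)=\int_0^t(1-\tilde G(s))\,\dif s$ gives $1-\tilde G(t)=G'(t)=\e^{-t}$, so $\tilde G(t)=1-\e^{-t}$, which yields the asserted $G(t)=\tilde G(t)=1-\e^{-t}$. I do not expect any genuine obstacle: the only point to keep in mind is that condition~\ref{item:U-ball} is exactly what guarantees that the exceedance sets $U_n$ entering Theorem~\ref{thm:DC+R=>EVL} coincide with metric balls centred at $\zeta$, so that the hypotheses imposed there on $\I_{U_n}$ and on $R_n=R(U_n)$ are the ones relevant for balls, and no additional verification is required.
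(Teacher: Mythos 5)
Your argument is correct and is exactly the paper's route: the corollary is stated as an immediate consequence of Theorem~\ref{thm:DC+R=>EVL} via the EVL--HTS link of \cite{FFT10,FFT11} (through the identity \eqref{eq:rel-Mn-r} and condition~\ref{item:U-ball}) together with the HTS--RTS equivalence \eqref{eq:HTS-RTS} from \cite{HLV05}, which indeed gives $G(t)=\tilde G(t)=1-\e^{-t}$. No further comment is needed.
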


Since, under the same assumptions of Theorem~\ref{thm:DC+R=>EVL}, condition $D_3(u_n)$ holds trivially then applying \cite[Theorem~5]{FFT10} we obtain:
 \begin{maincorollary}
 \label{cor:DC+R=>Poisson} Under the same hypothesis of Theorem~\ref{thm:DC+R=>EVL}, the REPP $N_n$ defined
  in \eqref{eq:def-REPP} is such that
  $N_n\xrightarrow[]{d}N$, as $n\rightarrow\infty$, where $N$
  denotes a Poisson Process with intensity $1$.
 \end{maincorollary}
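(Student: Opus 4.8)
The plan is to deduce the corollary from \cite[Theorem~5]{FFT10}, the point-process strengthening of \cite[Theorem~1]{FF08a}: for a stationary process $X_0,X_1,\dots$ of the form \eqref{eq:def-stat-stoch-proc-DS} with $u_n$ satisfying \eqref{eq:un}, the REPP $N_n$ converges in distribution to the intensity-one Poisson process provided both $D_3(u_n)$ and $D'(u_n)$ hold. Under the hypotheses of Theorem~\ref{thm:DC+R=>EVL} the condition $D'(u_n)$ is already part of that theorem's conclusion, so the only thing left to verify is $D_3(u_n)$, and I would obtain it by repeating, essentially word for word, the decay-of-correlations estimate that establishes $D_2(u_n)$ in Theorem~\ref{thm:DC+R=>EVL}.

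The first step is the elementary identity $\{\nn(A+t)=0\}=f^{-t}\!\big(\{\nn(A)=0\}\big)$, valid for every $A\in\RR$ and $t\in\N$: since $A\subseteq\R_0^+$ and $t$ is an integer we have $(A+t)\cap\N_0=(A\cap\N_0)+t$, and as $X_j=\varphi\circ f^j$ we have $\{X_{j+t}\le u_n\}=f^{-(j+t)}(\{\varphi\le u_n\})$, so intersecting over $j\in A\cap\N_0$ pulls out $f^{-t}$. Writing $U_n=\{X_0>u_n\}$ as in \eqref{def:Un} and $\{\nn(A)=0\}=\bigcap_{j\in A\cap\N_0}\{X_j\le u_n\}$, the quantity estimated in $D_3(u_n)$ is then exactly
\begin{align*}
&\big|\mu\big(U_n\cap f^{-t}(\{\nn(A)=0\})\big)-\mu(U_n)\,\mu\big(\{\nn(A)=0\}\big)\big|\\
&\qquad=\cv_\mu\big(\I_{U_n},\,\I_{\{\nn(A)=0\}},\,t\big)\,\|\I_{U_n}\|_{\mathcal C}\,\|\I_{\{\nn(A)=0\}}\|_{1},
\end{align*}
with $\|\cdot\|_1$ denoting the $L^1(\mu)$-norm. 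Now I would invoke the standing hypotheses: $\I_{U_n}\in\mathcal C$ with $\|\I_{U_n}\|_{\mathcal C}\le C'$, while $\|\I_{\{\nn(A)=0\}}\|_{1}=\mu(\{\nn(A)=0\})\le 1$; feeding $\phi=\I_{U_n}$ and $\psi=\I_{\{\nn(A)=0\}}$ into \eqref{DC:L1} bounds the above by $\gamma(n,t):=CC'\,t^{-2}$, which is nonincreasing in $t$ and independent of $n$. Taking any $t_n$ with $\sqrt n\ll t_n=o(n)$ --- for instance $t_n=\lfloor n^{2/3}\rfloor$, which can be chosen to coincide with the sequence used to verify $D'(u_n)$ --- gives $n\gamma(n,t_n)\to 0$, so $D_3(u_n)$ holds, and \cite[Theorem~5]{FFT10} yields the claimed convergence $N_n\xrightarrow[]{d}N$ to the Poisson process of intensity $1$.

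I do not foresee any real obstacle. The substance is only the observation that the correlation argument behind $D_2(u_n)$ is indifferent to the precise ``future'' event --- whether $\{M_\ell\le u_n\}$ or the more general $\{\nn(A)=0\}$ --- since in either case one correlates the single indicator $\I_{U_n}\in\mathcal C$ against an $L^1$ indicator of an event measurable with respect to the later coordinates $X_1,X_2,\dots$; the only place needing a little care is the bookkeeping identity $\{\nn(A+t)=0\}=f^{-t}(\{\nn(A)=0\})$ when $A$ is a finite union of intervals with real endpoints, which comes down to the fact that an integer translation of such a set translates its integer points by the same amount.
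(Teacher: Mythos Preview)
Your argument is correct and is essentially the same as the paper's: the paper simply remarks that $D_3(u_n)$ follows from \eqref{DC:L1} by taking $\phi=\I_{U_n}$ and $\psi=\I_{\{\nn(A)=0\}}$, with $\gamma(n,t)=CC'\,t^{-2}$ and $t_n=n^{2/3}$, and then invokes \cite[Theorem~5]{FFT10}. You have spelled out the shift identity $\{\nn(A+t)=0\}=f^{-t}(\{\nn(A)=0\})$ that the paper leaves implicit, but the route is identical.
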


\begin{remark}
\label{rem:cont}
Note that condition $R_n\to\infty$, as $n\to\infty$, is easily verified if the map is continuous at every point of the orbit of $\zeta$. We will state this formally in Lemma~\ref{prop:Rn-continuous}.
\end{remark}

\begin{remark}
\label{rem:decay-L1}
Observe that decay of correlations as in \eqref{DC:L1} against $L^1(\mu)$ observables is a very strong property. In fact, regardless of the rate (in this case $n^{-2}$), as long as it is summable, one can  actually  show that the system has exponential decay of correlations of H\"older observables against $L^\infty(\mu)$. (See \cite[Theorem~B]{AFL11}.)
\end{remark}

Now, we give an abstract result in the random setting which concludes by stating that by adding random noise, regardless of the   point $\zeta$ chosen, we always get an EI equal to 1.
\begin{theorem}
\label{thm:random-EVL}
Consider a dynamical system $(\mathcal M\times \Omega, \mathcal B, \mu_\varepsilon\times\theta_\varepsilon^\N, S)$, where $\mathcal M=\mathbb T^d$, for some $d\in\N$, $f:\mathcal M\to \mathcal M$ is a deterministic system which is randomly perturbed as in \eqref{eq:add-perturbation} with noise distribution given by \eqref{eq:noise-distribution} and $S$ is the skew product map defined in \eqref{def:skew-product}. Assume that there exists $\eta>0$ such that $\dist(f(x),f(y))\leq \eta\dist(x,y)$, for all $x,y\in\mathcal M$. Assume also that the stationary measure $\mu_\varepsilon$ is such that $\mu_\varepsilon=h_\varepsilon \l$, with $0<\underline h _\varepsilon\leq h_\varepsilon\leq \overline h _\varepsilon<\infty$. Suppose that there exists a Banach space $\mathcal C$ of real valued functions defined on $\mathcal M$ such that
for all $\phi\in\mathcal C$ and $\psi\in L^1(\mu_\varepsilon)$,
\begin{equation}
\label{RDC:L1}
\cv_{\mu_\varepsilon\times\theta_\varepsilon^\N}(\phi,\,\psi,n)\leq  Cn^{-2},
\end{equation} where $\cv_{\mu_\varepsilon\times\theta_\varepsilon^\N}(\cdot)$ is defined as in \eqref{eq:cor-random} and $C>0$ is a constant independent of both $\phi, \psi$. 

For any point $\zeta\in\mathcal M$, consider that $X_0, X_1,\ldots$ is defined as in \eqref{eq:def-rand-stat-stoch-proc-RDS2}, let $u_n$ be such that \eqref{eq:un} holds and assume that $U_n$ is defined as in \eqref{def:Un}.
If there exists $C'>0$ such that for all $n$ we have $\I_{U_n}\in \mathcal C$ and $\|\I_{U_n}\|_{\mathcal C}\leq C'$, then the stochastic process $X_0, X_1,\ldots$ satisfies $D_2(u_n)$ and $D'(u_n)$, which implies that we have an EVL for $M_n$ such that $\bar H(\tau)=\e^{-\tau}$.

\end{theorem}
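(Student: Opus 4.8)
The plan is to check conditions $D_2(u_n)$ and $D'(u_n)$ for the stationary process $X_0,X_1,\ldots$ of \eqref{eq:def-rand-stat-stoch-proc-RDS2} (equivalently $X_n=\bar\varphi\circ S^n$), after which \cite[Theorem~1]{FF08a} yields the EVL with $\bar H(\tau)=\e^{-\tau}$. Throughout, write $U_n^*=\{x\in\mathcal M:\varphi(x)>u_n\}$, so that $U_n=U_n^*\times\Omega$ and, viewed as a function on $\mathcal M$ independent of $\o$, $\I_{U_n}=\I_{U_n^*}\in\mathcal C$ with $\|\I_{U_n^*}\|_{\mathcal C}\le C'$; note $\mu_\varepsilon(U_n^*)=\p(X_0>u_n)$.

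For $D_2(u_n)$ I would use the decay of correlations \eqref{RDC:L1} in the standard way, the only delicate point being that $\{\max\{X_t,\ldots,X_{t+\ell-1}\}\le u_n\}$ genuinely depends on the realisation $\o$. Using the cocycle identity $f_{\o}^{j}=f_{\sigma^{t}\o}^{j-t}\circ f_{\o}^{t}$ for $j\ge t$ and the independence of the coordinates of $\o$, one rewrites
\[
\p\bigl(X_0>u_n,\ \max\{X_t,\ldots,X_{t+\ell-1}\}\le u_n\bigr)=\int\I_{U_n^*}\cdot\bigl(\mathcal U_\varepsilon^{t}H_\ell\bigr)\,\dif\mu_\varepsilon,
\]
where $H_\ell\colon\mathcal M\to[0,1]$ is $H_\ell(y)=\theta_\varepsilon^{\N}\bigl(\max_{0\le i<\ell}\varphi(f_{\o}^{i}(y))\le u_n\bigr)$; this function does not depend on $\o$, satisfies $\|H_\ell\|_1\le1$, and $\int H_\ell\,\dif\mu_\varepsilon=\p(M_\ell\le u_n)$. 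Feeding $\phi=\I_{U_n^*}$ and $\psi=H_\ell$ into \eqref{RDC:L1}, in the form given by \eqref{eq:cor-random}, produces
\[
\bigl|\p\bigl(X_0>u_n,\ \max\{X_t,\ldots,X_{t+\ell-1}\}\le u_n\bigr)-\p(X_0>u_n)\,\p(M_\ell\le u_n)\bigr|\le CC'\,t^{-2}=:\gamma(n,t),
\]
uniformly in $\ell$. Since $\gamma(n,t)$ is decreasing in $t$ and summable, $t_n=\lfloor n^{3/4}\rfloor$ gives $t_n=o(n)$ and $n\gamma(n,t_n)\to0$, i.e.\ $D_2(u_n)$.

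The main obstacle, and the heart of the matter, is $D'(u_n)$: this is precisely where the noise destroys the dichotomy, since in the deterministic case the short-return terms $\p(X_0>u_n,X_j>u_n)$ with $j$ a period of $\zeta$ are only of first order in $\mu_\varepsilon(U_n^*)$, which is what forces an extremal index below $1$. The key observation is that one application of $\mathcal U_\varepsilon$ replaces evaluation at a point by an average against a density bounded by $\overline{g_\varepsilon}$, and further applications cannot increase the supremum norm because $\mathcal U_\varepsilon 1=1$ and $\mathcal U_\varepsilon$ is monotone; hence $\|\mathcal U_\varepsilon^{j}\I_{U_n^*}\|_\infty\le\overline{g_\varepsilon}\,\l(U_n^*)$ for every $j\ge1$. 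Consequently, uniformly in $j\ge1$,
\[
\p(X_0>u_n,X_j>u_n)=\int\I_{U_n^*}\cdot\bigl(\mathcal U_\varepsilon^{j}\I_{U_n^*}\bigr)\,\dif\mu_\varepsilon\le\overline{g_\varepsilon}\,\l(U_n^*)\,\mu_\varepsilon(U_n^*)\le\frac{\overline{g_\varepsilon}}{\underline{h}_\varepsilon}\,\mu_\varepsilon(U_n^*)^2,
\]
using $\mu_\varepsilon=h_\varepsilon\l\ge\underline{h}_\varepsilon\l$. Since $\mu_\varepsilon(U_n^*)=\p(X_0>u_n)\sim\tau/n$ by \eqref{eq:un}, summing over $j=1,\ldots,\lfloor n/k_n\rfloor$ and multiplying by $n$ gives a bound of order $\tau^2/k_n$, which tends to $0$ for any $k_n\to\infty$; e.g.\ $k_n=\lfloor n^{1/8}\rfloor$ also satisfies $k_nt_n=o(n)$ as required in \eqref{eq:kn-sequence-1}. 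This is $D'(u_n)$, and \cite[Theorem~1]{FF08a} then completes the proof. I note that the estimates above only use the bounds on $g_\varepsilon$ and $h_\varepsilon$ together with \eqref{RDC:L1}; presumably the Lipschitz hypothesis $\dist(f(x),f(y))\le\eta\,\dist(x,y)$ enters only when verifying, for specific maps, that $\I_{U_n}\in\mathcal C$ with uniformly bounded norm.
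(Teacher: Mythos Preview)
Your proof is correct, and your route to $D'(u_n)$ is genuinely different from, and in fact cleaner than, the paper's. The paper splits the sum at a cutoff $\alpha_n$ with $\alpha_n\to\infty$, $\alpha_n=o(\log k_n)$: for $j\ge\alpha_n$ it uses decay of correlations against $L^1$ exactly as in the deterministic Theorem~\ref{thm:DC+R=>EVL}, while for $j<\alpha_n$ it controls short returns by showing $\theta_\varepsilon^\N\{\o:R^{\o}(U_n)\le\alpha_n\}$ is small, which is where the Lipschitz bound $\dist(f(x),f(y))\le\eta\,\dist(x,y)$ is actually used (to bound $|f_{\o}^j(U_n)|\le\eta^j|U_n|$). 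Your argument bypasses both the cutoff and the Lipschitz hypothesis by exploiting directly the smoothing effect of one step of additive noise: since $(\mathcal U_\varepsilon\I_{U_n^*})(x)=\int_{(U_n^*-f(x))\cap B_\varepsilon(0)}g_\varepsilon\,\dif\l\le\overline{g_\varepsilon}\,\l(U_n^*)$ and $\mathcal U_\varepsilon$ is an $L^\infty$-contraction, you get the uniform second-order bound $\p(X_0>u_n,X_j>u_n)\le(\overline{g_\varepsilon}/\underline h_\varepsilon)\,\mu_\varepsilon(U_n^*)^2$ for all $j\ge1$. This uses neither the Lipschitz bound nor, for $D'(u_n)$, the decay of correlations; so your closing remark is right in spirit, though the Lipschitz hypothesis in the paper is there for the paper's own proof of $D'(u_n)$ rather than for checking $\I_{U_n}\in\mathcal C$. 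The trade-off is that the paper's argument, being based on tracking orbits geometrically, adapts with minor bookkeeping to piecewise maps where one also has to keep iterates away from discontinuities (as done in the propositions following the theorem), whereas your argument hinges squarely on the convolution structure of additive noise on a translation-invariant space.
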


Again, using the connection between EVLs and HTS/RTS we get
 \begin{maincorollary}
 \label{cor:random-EVL=>HTS} Under the same hypothesis of Theorem~\ref{thm:random-EVL} we have exponential HTS/RTS for balls around $\zeta$, in the sense that \eqref{eq:def-HTS-law} and \eqref{eq:def-RTS-law} hold with $G(t)=\tilde G(t)=1-\e^{\t}$ and $V_n=B_{\delta_n}(\zeta)\times\Omega$, where $\delta_n\to0$, as $n\to\infty$.
 \end{maincorollary}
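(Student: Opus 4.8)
The plan is to obtain the corollary as a direct consequence of Theorem~\ref{thm:random-EVL} through the equivalence between EVLs and HTS/RTS, in the form already prepared in the paragraph preceding \eqref{eq:rel-HTS-EVL-pp}. First I would fix a distance-based observable: following \cite[equation~(4.1)]{FFT11}, take $\varphi(x)=g(\dist(x,\zeta))$ with $g$ strictly decreasing in a neighbourhood of $0$ and $g(0)=u_F=\varphi(\zeta)$ its global maximum (for instance $g(y)=-\log y$ or $g(y)=y^{-\alpha}$), so that for $u$ close to $u_F$ the set $U(u)=\{\varphi>u\}$ equals the ball $B_{\delta(u)}(\zeta)$ with $\delta(u)\to 0$ as $u\to u_F$. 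For this $\varphi$, condition~\ref{item:U-ball} holds: the ball property is immediate, and since $\mu_\varepsilon=h_\varepsilon\l$ with $h_\varepsilon$ bounded above and below, the function $\delta\mapsto\mu_\varepsilon(B_\delta(\zeta))$ is continuous and vanishes as $\delta\to 0$, so $u\mapsto\p(U(u))$ is continuous near $u_F$. Since $U_n=B_{\delta_n}(\zeta)$, the standing hypothesis of Theorem~\ref{thm:random-EVL} that $\I_{U_n}\in\mathcal C$ with $\|\I_{U_n}\|_{\mathcal C}\le C'$ is exactly the one inherited in the corollary, as are the Lipschitz bound on $f$, the absolute continuity of $\mu_\varepsilon$, and the decay of correlations \eqref{RDC:L1}. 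Hence Theorem~\ref{thm:random-EVL} applies and delivers an EVL for $M_n$ with $\bar H(\tau)=\e^{-\tau}$, i.e.\ $H(\tau)=1-\e^{-\tau}$.

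Next I would convert this EVL into random HTS. Regarding the random system as the deterministic skew product $(\mathcal M\times\Omega,\mathcal B,\mu_\varepsilon\times\theta_\varepsilon^\N,S)$ with observable $\bar\varphi$ of \eqref{def:random-observable}, the elementary identity \eqref{eq:rel-Mn-r} becomes $\{M_n\le u\}=\{r_{U(u)\times\Omega}>n\}$, where $r_{\,\cdot\,}$ is the first hitting time for $S$; and since $S^j(x,\o)=(f_{\o}^j(x),\sigma^j(\o))$ belongs to $U(u)\times\Omega$ precisely when $f_{\o}^j(x)\in U(u)$, this $S$-hitting time coincides with the quenched random hitting time $r_{U(u)}^{\o}(x)$ — which is exactly why the quenched perspective was adopted, and the setting in which Theorems~1 and~2 of \cite{FFT11} operate. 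Applying those theorems with $H(\tau)=1-\e^{-\tau}$ yields random HTS for $V_n=B_{\delta_n}(\zeta)\times\Omega$, $\delta_n\to0$, with $G(t)=H(t)=1-\e^{-t}$, which is \eqref{eq:def-HTS-law}.

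Finally I would pass to RTS using the Main Theorem in \cite{HLV05}: HTS $G$ is equivalent to RTS $\tilde G$ with $G$ and $\tilde G$ related by \eqref{eq:HTS-RTS}; plugging $G(t)=1-\e^{-t}$ into $G(t)=\int_0^t(1-\tilde G(s))\,\dif s$ and differentiating gives $\e^{-t}=1-\tilde G(t)$, so $\tilde G(t)=1-\e^{-t}=G(t)$, establishing \eqref{eq:def-RTS-law}.

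There is no new analytic content here — everything is contained in Theorem~\ref{thm:random-EVL} — so the only point that needs care is purely a matter of bookkeeping: verifying that the distance-based observable required in order to phrase HTS/RTS for genuine balls $B_{\delta_n}(\zeta)$ still satisfies all hypotheses of Theorem~\ref{thm:random-EVL}, in particular that the indicators $\I_{B_{\delta_n}(\zeta)}$ lie in $\mathcal C$ with uniformly bounded norm, which is precisely the standing assumption of that theorem. One should also check that the normalisation $t/\p(V_n)$ in \eqref{eq:def-HTS-law} is the correct one — but this follows, via Kac's lemma and \eqref{eq:un}, from the matching between $\tau$ and $t$ in the translation provided by \cite{FFT11}, so the deduction is immediate.
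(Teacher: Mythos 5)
Your proposal is correct and follows essentially the same route as the paper: the corollary is obtained directly from Theorem~\ref{thm:random-EVL} by viewing the random system as the skew product with the observable $\bar\varphi$ of \eqref{def:random-observable}, invoking the EVL--HTS equivalence of \cite{FFT11} to get \eqref{eq:def-HTS-law} for $V_n=B_{\delta_n}(\zeta)\times\Omega$, and passing to RTS via \eqref{eq:HTS-RTS} from \cite{HLV05}. Your extra bookkeeping on the distance-based observable and the continuity of $u\mapsto\p(U(u))$ is consistent with the paper's standing assumptions and does not change the argument.
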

 
 Moreover, appealing to \cite[Theorem~5]{FFT10} once again, we have
 \begin{maincorollary}
 \label{cor:random-EVL=>Poisson} Under the same hypothesis of Theorem~\ref{thm:random-EVL}, the stochastic process $X_0, X_1,\ldots$ satisfies $D_3(u_n)$ and $D'(u_n)$, which implies that  the REPP $N_n$ defined
  in \eqref{eq:def-REPP} is such that
  $N_n\xrightarrow[]{d}N$, as $n\rightarrow\infty$, where $N$
  denotes a Poisson Process with intensity $1$.
 \end{maincorollary}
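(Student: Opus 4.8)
The plan is that this corollary is an almost immediate consequence of Theorem~\ref{thm:random-EVL} together with one extra, essentially cost-free, ingredient. Theorem~\ref{thm:random-EVL} already provides $D'(u_n)$ for $X_0,X_1,\ldots$, so the only genuinely new thing to check is the point-process condition $D_3(u_n)$; once $D_3(u_n)$ and $D'(u_n)$ both hold, the convergence $N_n\xrightarrow[]{d}N$ to the standard (intensity-one) Poisson process is exactly \cite[Theorem~5]{FFT10}, the absence of clustering being precisely what $D'(u_n)$ records. Thus the whole argument reduces to verifying $D_3(u_n)$, and I would do this by repeating, essentially verbatim, the decay-of-correlations estimate that yields $D_2(u_n)$ in the proof of Theorem~\ref{thm:random-EVL}, the sole change being that the ``future'' event $\{M_\ell\le u_n\}$ is replaced by the more general $\{\nn(A)=0\}$, $A\in\RR$.

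To carry this out, I would fix $A\in\RR$ and $t\in\N$ (assuming $A\cap\N_0\ne\emptyset$, else there is nothing to prove) and work in the skew product $S$ on $\mathcal M\times\Omega$, where $X_j=\bar\varphi\circ S^j$ and $\{X_0>u_n\}=U_n^*\times\Omega$ with $U_n^*=\{x\in\mathcal M:\varphi(x)>u_n\}$. The elementary set identity $(A+t)\cap\N_0=t+(A\cap\N_0)$, valid since $A\subseteq\R_0^+$ and $t\in\N_0$, gives $\{\nn(A+t)=0\}=S^{-t}\{\nn(A)=0\}$, so that
\[
\p\big(\{X_0>u_n\}\cap\{\nn(A+t)=0\}\big)=\int_{\mathcal M\times\Omega}\I_{U_n^*}(x)\,\big(\I_{\{\nn(A)=0\}}\circ S^t\big)(x,\o)\,\dif\mu_\varepsilon(x)\,\dif\theta_\varepsilon^\N(\o).
\]
Here $\big(\I_{\{\nn(A)=0\}}\circ S^t\big)(x,\o)=\prod_{i\in A\cap\N_0}\I_{\{\varphi\le u_n\}}\big(f_\o^{t+i}(x)\big)$ depends only on $x$ and on $\omega_1,\dots,\omega_{t+L}$, where $L:=\max(A\cap\N_0)$; integrating out $\omega_{t+1},\dots,\omega_{t+L}$, which under $\theta_\varepsilon^\N$ are independent of $\omega_1,\dots,\omega_t$, produces the bounded function on $\mathcal M$
\[
\psi(y):=\int\prod_{i\in A\cap\N_0}\I_{\{\varphi\le u_n\}}\big(f_{\o'}^i(y)\big)\,\dif\theta_\varepsilon^\N(\o'),\qquad 0\le\psi\le 1,
\]
and one is left with $\int_{\mathcal M}\I_{U_n^*}\cdot(\mathcal{U}_\varepsilon^t\psi)\,\dif\mu_\varepsilon$. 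Since $\psi$ is bounded, $\psi\in L^1(\l)=L^1(\mu_\varepsilon)$ with $\|\psi\|_1\le 1$, while $\I_{U_n^*}\in\mathcal C$ with $\|\I_{U_n^*}\|_{\mathcal C}\le C'$ by hypothesis; hence \eqref{RDC:L1} together with the representation \eqref{eq:cor-random} gives
\[
\Big|\,\p\big(\{X_0>u_n\}\cap\{\nn(A+t)=0\}\big)-\mu_\varepsilon(U_n^*)\!\int\psi\,\dif\mu_\varepsilon\,\Big|\le CC'\,t^{-2}.
\]
Finally $\mu_\varepsilon(U_n^*)=\p(\{X_0>u_n\})$ and, by Fubini and the definition of $\psi$, $\int\psi\,\dif\mu_\varepsilon=\int_{\mathcal M\times\Omega}\I_{\{\nn(A)=0\}}\,\dif\p=\p(\{\nn(A)=0\})$; so $D_3(u_n)$ holds with $\gamma(n,t)=CC't^{-2}$, and choosing, say, $t_n=\lfloor n^{2/3}\rfloor=o(n)$ makes $n\gamma(n,t_n)\to0$ (this $t_n$ is compatible with the auxiliary sequence $k_n$ used for $D'(u_n)$ in Theorem~\ref{thm:random-EVL}, since there too the relevant bound decays like $t^{-2}$).

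With $D_3(u_n)$ and $D'(u_n)$ both in force, \cite[Theorem~5]{FFT10} yields $N_n\xrightarrow[]{d}N$ with $N$ a Poisson process of intensity $1$, which is the assertion. I do not foresee a real obstacle here: the substantive work — the $L^1$-decay estimate and the absolute-continuity/translation-invariance argument behind $D'(u_n)$ — is already carried out inside Theorem~\ref{thm:random-EVL}. The only step requiring some care is the bookkeeping in the middle paragraph: checking that integrating out the ``far'' noise increments genuinely turns the point-process event $\{\nn(A+t)=0\}$ into a bona fide bounded observable $\psi$ on $\mathcal M$ — so that hypothesis \eqref{RDC:L1}, stated only for observables on $\mathcal M$, is applicable — and verifying the trivial identity $(A+t)\cap\N_0=t+(A\cap\N_0)$ that licenses pulling the future event back by $S^t$.
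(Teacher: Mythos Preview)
Your proposal is correct and follows essentially the same approach as the paper: the paper's proof likewise notes that only $D_3(u_n)$ is new and obtains it by replacing the observable $\psi$ from the $D_2(u_n)$ argument with the noise-averaged indicator $\psi(x)=\int \I_{\bigcap_{i\in A\cap\N}\{f^{i}_{\tilde\o}(x)\leq u_n\}}\,\dif\theta_\varepsilon^{\N}(\tilde\o)$, then invoking \eqref{RDC:L1} and \cite[Theorem~5]{FFT10}. Your write-up is simply more explicit about the skew-product bookkeeping (the identity $(A+t)\cap\N_0=t+(A\cap\N_0)$ and the integration over the far noise increments) that the paper leaves implicit.
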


\begin{remark}
\label{rem:interval-maps}
We remark that we do not need to consider that $\mathcal M$ is a $d$ dimensional torus in order to apply the theory. Basically, we only need that $f_\omega(\mathcal M)\subset \mathcal M$, for all $\omega\in B_\eps(0)$.
As we will see in more details in Section~\ref{sec:random}, for example, piecewise expanding maps of the interval, with finitely many branches, 
satisfy all the conclusions of Theorem~\ref{thm:random-EVL} .
\end{remark}

\section{Extremal Index dichotomy for deterministic systems}
\label{sec:dichotomy}
In this section we will start by proving Theorem~\ref{thm:DC+R=>EVL}, Corollary~\ref{cor:DC+R=>Poisson} and a simple lemma asserting that continuity is enough to guarantee that $R_n\to\infty$, as $n\to\infty$.

Next, we give examples of systems to which we can apply Theorem~\ref{thm:DC+R=>EVL} in order to prove a dichotomy regarding the existence of an EI equal to 1 or less than 1, depending on whether $\zeta$ is non-periodic or periodic, respectively. This will be done for uniformly expanding and piecewise expanding maps, when all points in the orbit of $\zeta$ are continuity points of the map.

In the third subsection, we will consider Rychlik maps, which are piecewise expanding maps of the interval, and will analyse the EI also in the cases when the orbit of $\zeta$ hits a discontinuity point of the map. 


\subsection{Decay against $L^1$ implies exponential EVL at non-periodic points}
\label{subsec:proof-dichtomy}
\begin{proof}[Proofs of Theorem~\ref{thm:DC+R=>EVL} and Corollary~\ref{cor:DC+R=>Poisson}]

As explained in \cite[Section~5.1]{F12}, conditions $D_2(u_n)$ and $D_3(u_n)$ are designed to follow easily from decay of correlations. In fact, if we choose $\phi=\I_{U_n}$ and $\psi=\I_{\{M_\ell\leq u_n\}}$, in the case of $D_2(u_n)$, and $\psi=\I_{\nn(A)=0}$, for some $A\in\mathcal R$, in the case of $D_3(u_n)$, we have that we can take $\gamma(n,t)=C^* t^{-2}$, where $C^*=CC'$. Hence, conditions $D_2(u_n)$ and $D_3(u_n)$ are trivially satisfied for the sequence $(t_n)_n$ given by $t_n=n^{2/3}$, for example.

Now, we turn to condition $D'(u_n)$.
Taking $\psi=\phi=\I_{U_n}$ in \eqref{DC:L1} and since $\|\I_{U_n}\|_{\mathcal C}\leq C'$ we easily get
\begin{align}
\label{eq:estimate1}
\mu\left(U_n\cap f^{-j}(U_n)\right) \le
 (\mu(U_n))^2+C \left\| \I_{U_n}\right\|_{\mathcal C} \left\| \I_{U_n}\right\|_{L^1(\mu)} j^{-2}\leq  (\mu(U_n))^2+C^*\mu(U_n)j^{-2},
\end{align}
where $C^*=CC'>0$.
By definition of $R_n$, estimate \eqref{eq:estimate1} and since $n\mu(U_n)\to \tau$ as $n \to \infty$ it follows that there exists some constant $D>0$ such that
\begin{align*}
n\sum_{j=1}^{\lfloor n/k_n \rfloor}& \mu(U_n\cap f^{-j}(U_n)) = n\sum_{j=R_n}^{\lfloor n/k_n \rfloor} \mu(U_n\cap f^{-j}(U_n))\le n\big\lfloor\tfrac {n}{k_n}\big\rfloor\mu(U_n)^2 +n\,C^*\mu(U_n) \sum_{j=R_n}^{\lfloor n/k_n \rfloor}j^{-2}\\
&\le \frac{(n\mu(U_n))^2}{k_n} +n\,C^*\mu(U_n) \sum_{j=R_n}^{\infty}j^{-2}\leq D \left(\frac{\tau^2}{k_n}+\tau \sum_{j=R_n}^{\infty}j^{-2} \right)\xrightarrow[n\to\infty]{}0.
\end{align*}
\end{proof}

\begin{remark}
\label{rem:L1stuff}
In the above demonstration it is important to use $L^1$-norm to obtain the factor $\mu(U_n)$ in the second summand of the last term in \eqref{eq:estimate1}, which is crucial to kill off the $n$ factor coming from the definition of $D'(u_n)$. However, note that we actually do not need decay of correlations against $L^1$ in its full strength, which means that it holds for all $L^1$ functions. In fact, in order to prove $D'(u_n)$ we only need it to hold for the functions $\I_{U_n}$.

Also, note that we do not need such a strong statement regarding the decay of correlations of the system in order to prove $D_2(u_n)$ or $D_3(u_n)$. In particular, even if $\I_{U_n}\notin \mathcal C$ (as when $\mathcal C$ is the space of H\"older continuous functions), we can still verify these conditions by using a suitable H\"older approximation. (See \cite[Proposition~5.2]{F12}.)
%
%
\end{remark}

According to Theorem~\ref{thm:DC+R=>EVL}, in general terms, if the system has decay of correlations against $L^1$ observables, then to prove $D'(u_n)$ one has basically to show that $R_n\to\infty$, as $n\to\infty$. Next lemma gives us a sufficient condition for that to happen.

\begin{lemma}
\label{prop:Rn-continuous}
Assume that $\zeta$ is not a periodic point and that $f$ is continuous at every point of the orbit of $\zeta$, namely $\zeta, f(\zeta), f^2(\zeta),\ldots$, then $\lim_{n\to\infty}R_n=\infty$, where $R_n$ is as in Theorem~\ref{thm:DC+R=>EVL}.
\end{lemma}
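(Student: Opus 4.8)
The plan is to argue by contradiction using a compactness and continuity argument. Recall that $R_n = R(U_n)$ is the first return time from $U_n$ to itself, where $U_n = U(u_n) = \{X_0 > u_n\}$ is, by condition \ref{item:U-ball}, a topological ball centred at $\zeta$ whose $\mu$-measure tends to $0$ as $n\to\infty$; hence (since $\mu$ is absolutely continuous, or at least gives positive mass to every ball around $\zeta$) the Euclidean radius of $U_n$ shrinks to $0$. Suppose, for contradiction, that $R_n \not\to \infty$. Then there is some fixed $q\in\N$ and a subsequence $(n_k)$ along which $R_{n_k} = q$ for all $k$; that is, for each $k$ there is a point $x_k \in U_{n_k}$ with $f^q_\omega(x_k) = f^q(x_k) \in U_{n_k}$.

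Next I would extract the limit. Since $x_k \in U_{n_k}$ and $U_{n_k}$ is a ball around $\zeta$ with radius going to $0$, we have $x_k \to \zeta$. Likewise $f^q(x_k) \to \zeta$ along the subsequence. Now I invoke the hypothesis that $f$ is continuous at every point of the orbit $\zeta, f(\zeta), \dots, f^{q-1}(\zeta)$: composing, $f^q$ is continuous at $\zeta$, so $f^q(x_k) \to f^q(\zeta)$. Comparing the two limits of the same sequence $f^q(x_k)$ forces $f^q(\zeta) = \zeta$, i.e.\ $\zeta$ is a periodic point of period dividing $q$. This contradicts the assumption that $\zeta$ is not periodic, and the lemma follows.

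A couple of routine points need to be handled carefully but present no real obstacle. First, one must be slightly careful about how $U_n$ ``centred at $\zeta$'' is used: condition \ref{item:U-ball} only gives that $U_n$ is a topological ball around $\zeta$ and that $\mu(U_n) = \p(U_n)$ varies continuously; to conclude $\operatorname{diam}(U_n)\to 0$ one uses that $\{u_n\}$ is chosen with $n\,\mu(U_n)\to\tau$, so $\mu(U_n)\to 0$, and $\mu$ assigns positive measure to every neighbourhood of $\zeta$ (as $\mu$ is absolutely continuous with respect to $\l$ with the densities considered here), so the radii must shrink. Second, in the random-dynamics incarnation of the lemma $R_n$ would involve $f_\omega$, but in the deterministic statement here $f^q_\omega = f^q$ and continuity of the single map $f$ at finitely many points is exactly what is assumed; nothing further is needed. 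The only mildly delicate step is the extraction of a constant-value subsequence from a bounded non-divergent integer sequence, which is immediate since $\N$ is discrete.

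I do not expect any genuine obstacle: the whole content is the interplay ``small ball $+$ short return $\Rightarrow$ approximate periodicity'', upgraded to exact periodicity by continuity and the discreteness of the return time. The main thing to get right is simply to state clearly why $\operatorname{diam}(U_n)\to 0$ and why finitely many continuity hypotheses suffice.
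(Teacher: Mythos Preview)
Your proof is correct and uses the same ingredients as the paper's: the paper argues directly by fixing $j$, setting $\epsilon=\min_{1\le i\le j}\dist(f^i(\zeta),\zeta)>0$, and using continuity of each $f^i$ at $\zeta$ to find a neighbourhood $U$ of $\zeta$ with $f^i(U)\cap U=\emptyset$ for $i\le j$, whence $R_n>j$ once $U_n\subset U$. Your contradiction/limit argument is simply the contrapositive of this same reasoning, and both rely on $\operatorname{diam}(U_n)\to 0$ and continuity of finitely many iterates at $\zeta$ in exactly the way you describe.
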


\begin{proof}
Let $j\in\N$. We will show that if $n\in\N$ is sufficiently large then $R_n>j$. Let $\epsilon=\min_{i=1,\ldots,j}\dist(f^i(\zeta),\zeta)$. Our assumptions assure that each $f^i$, for $i=1,\ldots,j$, is continuous at $\zeta$. Hence, for every $i=1,\ldots,j$, there exists $\delta_i>0$ such that $f^i(B_{\delta_i}(\zeta))\subset B_{\epsilon/2}(f^i(\zeta))$. Let $U:=\bigcap_{i=1}^j B_{\delta_i}(\zeta)$. If we choose $N$ sufficiently large that $U_n\subset U$ for all $n\geq N$, then using the definition of $\epsilon$ it is clear that $f^i(U_n)\cap U_n=\emptyset$, for all $i=1,\ldots,j$, which implies that $R_n>j$.
\end{proof}


\subsection{The dichotomy for specific systems}\label{subsec:examples-dichotomy}
One of the results in \cite{FFT12} is that for uniformly expanding systems like the doubling map, there is a dichotomy in terms of the type of laws of rare events that one gets at every possible centre $\zeta$. Namely, it was shown that either $\zeta$ is non-periodic, in which case, you always get a standard exponential EVL/HTS, or $\zeta$ is a periodic (repelling) point, in which case you obtain an exponential law with an EI $0<\vartheta<1$ given by the expansion rate at $\zeta$ (see \cite[Section~6]{FFT12}). This was proved for cylinders rather than balls, meaning that the sets $U_n$ are dynamically defined cylinders (see \cite[Section~5]{FFT12} or \cite[Section~5]{FFT11}, for details). Results for cylinders are weaker than the ones for balls, since, in rough terms, it means that the limit is only obtained for certain subsequences of $n\in\N$ rather than for the whole sequence.

In \cite{FFT12}, it was conjectured that this dichotomy should hold in greater generality, namely for balls rather than cylinders and more general systems. As a consequence of Theorem~\ref{thm:DC+R=>EVL} we will be able to show that the dichotomy indeed holds for balls and more general systems. We remark that from the results in \cite{FP12}, one can also derive the dichotomy for conformal repellers and, in \cite{K12}, the dichotomy is also obtained for maps with a spectral gap for their Perron-Frobenius operator. In both these papers, the results were obtained by studying the spectral properties of the Perron-Frobenius operator.

\subsubsection{Rychlik maps}

We will introduce a class of dynamical systems considered by Rychlik in \cite{R83}.  This class includes, for example, piecewise $C^2$ uniformly expanding maps of the unit interval with the relevant physical measures.  We first need some definitions.

\begin{definition}
\label{def:variation}
Given a potential $\psi:Y\to \R$ on an interval $Y$, the \emph{variation} of $\psi$ is defined as
$${\rm Var}(\psi):=\sup\left\{\sum_{i=0}^{n-1} |\psi(x_{i+1})-\psi(x_i)|\right\},$$
where the supremum is taken over all finite ordered sequences $(x_i)_{i=0}^n\subset Y$.
\end{definition}

We use the norm $\|\psi\|_{BV}= \sup|\psi|+{\rm Var}(\psi)$, which makes $BV:=\left\{\psi:Y\to \R:\|\psi\|_{BV}<\infty\right\}$ into a Banach space. We also define
\[
S_n\psi(x):=\psi(x)+\cdots +\psi\circ f^{n-1}(x).
\]
\begin{definition}\label{def:equilibrium-state}
For a measurable potential $\psi:\X\to \R$, we define the \emph{pressure} of $(\X,f,\phi)$ to be
$$P(\phi):=\sup_{\p\in \M_f}\left\{h(\p)+\int\phi~d\p:-\int\phi~d\p<\infty\right\},$$
where $\M_f$ is the set of $f$-invariant measures and $h(\p)$ denotes the metric entropy of the measure $\p$, see \cite{W82} for details.
If $\p$ is an invariant probability measure such that $h(\p_\phi)+\int\phi~d\p=P(\phi)$, then we say that $\p$ is an \emph{equilibrium state}.
\end{definition}

\begin{definition}\label{def:conformal-measure}
 A measure $m$ is called a \emph{$\phi$-conformal} measure if $m(\M)=1$ and if whenever $f:A\to f(A)$ is a bijection, for a Borel set $A$, then
$m(f(A))=\int_A e^{-\phi}~dm$.  Therefore, 
if $f^n:A\to f^n(A)$ is a bijection then
$m(f^n(A))=\int_A e^{-S_n\phi}~dm.$
\end{definition}


\begin{definition}[Rychlik system]\label{def:Rychlik}
$(Y,f, \psi)$ is a \emph{Rychlik system} if $Y$ is an interval, $\{Y_i\}_i$ is an at most countable collection of open intervals such that $\bigcup_i \overline{Y}_i\supset Y$ (where $\overline{Y}_i$ is the closure of $Y_i$),  $f:\bigcup_{i} Y_i \to Y$  is a function continuous on each $Y_i$, which admits a continuous extension to the closure of $Y_i$ that we denote by $f_i:\overline{Y}_i\to Y$ and $\psi:Y\to [-\infty, \infty)$ is a potential such that
\begin{enumerate}
\item
$f_i:\overline{Y}_i\to f(\overline{Y}_i)$ is a diffeomorphism;
\item
${\rm Var}\ \e^\psi<+\infty$, $\psi=-\infty$ on $Y \setminus \bigcup_i Y_i$ and $P(\psi)=0$;
\item there is a $\psi$-conformal measure $m_\psi$ on $Y$;
\item
$(f,\psi)$ is expanding: $\displaystyle \sup_{x\in Y} \psi(x) < 0$.
\end{enumerate}
\end{definition}

Rychlik \cite{R83}  proved that these maps have exponential decay of correlations against $L^1$ observables. To be more precise, if
$(Y,f,\psi)$ is a topologically mixing Rychlik system, then there exists an equilibrium state $\mu_\psi=h m_\psi$ where $h\in BV$
and $m_\psi$ and $\mu_\psi$ are non-atomic and $(Y, f, \mu_\psi)$ has exponential decay of correlations, i.e., there exists $C>0$ and $\gamma\in (0,1)$ such that
\begin{equation}
\label{eq:Rychlik-DC}
\left|\int\varsigma\circ f^n\cdot \phi~\dif\mu_\psi- \int\varsigma~\dif\mu_\psi \int\phi~\dif\mu_\psi\right| \le C\|\varsigma\|_{L^1(\mu_\psi)}\|\phi\|_{BV} \gamma^n,
\end{equation}
for any $\varsigma\in L^1(\mu_\psi)$ and $\phi\in BV$. 
Note that, in the original statement, instead of the $L^1(\mu_\psi)$-norm, the $L^1(m_\psi)$-norm appeared. However, we will assume that $h>c$, for some $c>0$, which means that we can write \eqref{eq:Rychlik-DC} as it is. We remark that $h$ being bounded below by a positive constant is not very restrictive. That is the case if, for example, $h$ is lower semi-continuous (see \cite[Theorem~8.2.3]{BG97}) or if the system has summable variations as uniformly expanding systems with H\"older continuous potentials do.

Let $\mathbb S=Y\setminus \bigcup_i Y_i$ and define $ \Lambda:=\{x\in Y: \, f^n(x)\notin \mathbb S,\;\text{for all}\; n\in\N_0 \}$. As a consequence of Theorem~\ref{thm:DC+R=>EVL}, Corollary~\ref{cor:DC+R=>Poisson} and Lemma~\ref{prop:Rn-continuous} it follows immediately:
\begin{proposition}
\label{prop:Rychlik}
Suppose that $(Y,f,\psi)$ is a topologically mixing Rychlik system, $\psi$ is H\"older continuous on each $\overline Y_i$, and $\mu=\mu_\psi$ is the corresponding equilibrium state such that $\frac{d\mu_\psi}{d m_\psi}>c$, for some $c>0$.  Let $X_0, X_1, \ldots$ be given by \eqref{eq:def-stat-stoch-proc-DS}, where $\varphi$ achieves a global maximum at some point $\zeta$. Then we have an EVL for $M_n$ and
\begin{enumerate}

\item if $\zeta\in\Lambda$ is not a periodic point then the EVL is such that  $\bar H(\tau)=\e^{-\tau}$ and the REPP $N_n$ converges in distribution to a standard Poisson process $N$ of intensity 1.

\item if $\zeta\in\Lambda$ is a (repelling) periodic point of prime period $p$ then the EVL is such that $\bar H(\tau)=\e^{-\vartheta\tau}$ where the EI is given by $\vartheta=1-\e^{S_p\psi(\zeta)}$ and the REPP $N_n$ converges in distribution to a compound Poisson process $N$ with intensity $\vartheta$ and multiplicity d.f. $\pi$ given by
$
\pi(\kappa)=\vartheta(1-\vartheta)^\kappa,
$
for every $\kappa\in\N_0$.
\end{enumerate}

\end{proposition}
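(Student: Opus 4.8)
The plan is to run both cases of the proposition through the machinery built around decay of correlations against $L^1$, taking the Banach space to be $\mathcal C=BV$. Rychlik's estimate \eqref{eq:Rychlik-DC}, together with the standing assumption $h>c$ (which lets us replace the $L^1(m_\psi)$-norm there by the $L^1(\mu_\psi)$-norm), gives $\cv_{\mu_\psi}(\phi,\varsigma,n)\le C\gamma^n$ for $\phi\in BV$ and $\varsigma\in L^1(\mu_\psi)$; since $\gamma^n$ is summable, hypothesis \eqref{DC:L1} of Theorem~\ref{thm:DC+R=>EVL} holds a fortiori. Moreover, by \ref{item:U-ball} each $U_n=\{X_0>u_n\}$ is an open subinterval of $Y$, so $\I_{U_n}\in BV$ with $\|\I_{U_n}\|_{BV}=\sup|\I_{U_n}|+{\rm Var}(\I_{U_n})\le 3$, which supplies the uniform bound $C'$. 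For item (1): since $\zeta\in\Lambda$, every point of the orbit of $\zeta$ lies in some $Y_i$, where $f$ is continuous; as $\zeta$ is not periodic, Lemma~\ref{prop:Rn-continuous} gives $R_n\to\infty$, and Theorem~\ref{thm:DC+R=>EVL} together with Corollary~\ref{cor:DC+R=>Poisson} yields $\bar H(\tau)=\e^{-\tau}$ and convergence of $N_n$ to the standard Poisson process.

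For item (2) there is clustering, so Theorem~\ref{thm:DC+R=>EVL} no longer applies directly; instead I would invoke the abstract theory for repelling periodic points from \cite{FFT12a} (cf.\ also \cite{FFT12,HV09}), which reduces the statement to checking a mixing condition $D^p(u_n)$, a periodic anti-clustering condition $D'_p(u_n)$, and the existence of the limit $\vartheta:=\lim_n\mu_\psi(Q_n)/\mu_\psi(U_n)\in(0,1)$, where $Q_n:=U_n\setminus f^{-p}(U_n)$ is the ``annulus'' attached to $\zeta$. Near $\zeta$ the map $f^p$ is a continuous, uniformly expanding local diffeomorphism fixing $\zeta$ (continuity along the orbit is exactly where $\zeta\in\Lambda$ is used), with a contracting local inverse branch $g$ satisfying $g(\zeta)=\zeta$; hence for $n$ large one has $g(U_n)\subset U_n$ and $U_n\cap f^{-p}(U_n)=g(U_n)$, so $Q_n=U_n\setminus g(U_n)$ is a union of at most two open intervals and $\|\I_{Q_n}\|_{BV}\le5$. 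Then $D^p(u_n)$ follows from \eqref{eq:Rychlik-DC} exactly as $D_2(u_n)$ and $D_3(u_n)$ do in the proof of Theorem~\ref{thm:DC+R=>EVL}, and $D'_p(u_n)$ follows by taking $\phi=\psi=\I_{Q_n}$ in \eqref{eq:Rychlik-DC}, which gives $\mu_\psi(Q_n\cap f^{-j}(Q_n))\le\mu_\psi(Q_n)^2+C^*\mu_\psi(Q_n)j^{-2}$ as in \eqref{eq:estimate1}, and then running the summation argument of Theorem~\ref{thm:DC+R=>EVL} (using $n\mu_\psi(Q_n)\to\vartheta\tau<\infty$), provided $R(Q_n)\to\infty$.

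The fact $R(Q_n)\to\infty$ is the annulus analogue of Lemma~\ref{prop:Rn-continuous}: for $x\in Q_n=U_n\setminus g(U_n)$ one has $f^p(x)\notin U_n$, and because $f^p$ expands distance to $\zeta$ near $\zeta$, the iterates $f^{kp}(x)$ keep moving away from $\zeta$ while they remain in the expanding neighbourhood of $\zeta$, which they do for $kp$ up to an order growing like $\log(1/{\rm diam}\,U_n)\to\infty$; for $j=kp+r$ with $0<r<p$ the point $f^{kp}(x)$ is still close to $\zeta$, so the continuous map $f^r$ sends it near $f^r(\zeta)$, which by primeness of $p$ has positive distance to $\zeta$, whence $f^j(x)\notin U_n\supset Q_n$. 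Finally, the value of $\vartheta$ comes from $\psi$-conformality: since $f^p\colon g(U_n)\to U_n$ is a bijection, $m_\psi(U_n)=\int_{g(U_n)}\e^{-S_p\psi}\,\dif m_\psi$, and continuity of $S_p\psi$ at $\zeta$ (each $\psi|_{\overline Y_i}$ is H\"older, $\zeta\in\Lambda$) gives $m_\psi(g(U_n))/m_\psi(U_n)\to\e^{S_p\psi(\zeta)}$; since $\mu_\psi=h\,m_\psi$ with $h$ well behaved at $\zeta\in\Lambda$ the same limit holds for $\mu_\psi$, so $\vartheta=1-\e^{S_p\psi(\zeta)}$, which lies in $(0,1)$ because $\sup_Y\psi<0$ forces $S_p\psi(\zeta)<0$; this gives the EVL, the EI and, via \cite{FFT12a}, the compound Poisson limit with geometric multiplicity $\pi(\kappa)=\vartheta(1-\vartheta)^\kappa$ asserted in (2).

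The step I expect to be the main obstacle is item (2): one must pin down the precise abstract periodic-point statement of \cite{FFT12a} and match its hypotheses, and — the genuinely dynamical point — establish $R(Q_n)\to\infty$ via the expansion estimate near the repelling point $\zeta$; by comparison the verifications of $D^p(u_n)$, $D'_p(u_n)$ and the computation of $\vartheta$ are routine once the annular set $Q_n$ is correctly described.
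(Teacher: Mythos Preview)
Your argument is correct and, for item~(1), essentially identical to the paper's: choose $\mathcal C=BV$, feed Rychlik's exponential estimate \eqref{eq:Rychlik-DC} into Theorem~\ref{thm:DC+R=>EVL}, note the uniform $BV$-bound on $\I_{U_n}$, and use $\zeta\in\Lambda$ together with Lemma~\ref{prop:Rn-continuous} to get $R_n\to\infty$. (Your bound $\|\I_{U_n}\|_{BV}\le3$ is in fact the right one under the paper's norm; the paper writes $\le2$, but of course any uniform constant suffices.)

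For item~(2) the two diverge only in level of detail: the paper does not redo the argument at all, but simply invokes \cite[Proposition~2]{FFT12} and \cite[Corollary~3]{FFT12a}, where exactly this statement for Rychlik systems was already established. Your sketch is an honest reconstruction of what those results do — set up the annulus $Q_p(u_n)$, check $D^p(u_n)$ and $D'_p(u_n)$ from decay of correlations against $L^1$, and compute $\vartheta$ via conformality — and it is correct. One minor remark on your $R(Q_n)\to\infty$ step: the expansion argument you give is fine, but there is a cleaner purely set-theoretic route for the multiples of $p$: since $g(U_n)\subset U_n$, one has $f^{-kp}(U_n)\cap U_n\subset f^{-p}(U_n)$ for every $k\ge1$, hence $Q_n\cap f^{-kp}(Q_n)\subset(U_n\setminus f^{-p}(U_n))\cap f^{-p}(U_n)=\emptyset$; the non-multiples of $p$ are handled exactly as in Lemma~\ref{prop:Rn-continuous}. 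This avoids the need to track how long iterates remain in the expanding neighbourhood.
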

\begin{proof}
We start by noting that statement (2) has already been proved in \cite[Proposition~2]{FFT12} and \cite[Corollary~3]{FFT12a}.

Regarding statement (1),
first note that for Rychlik maps, \eqref{eq:Rychlik-DC} clearly implies that condition \eqref{DC:L1} is satisfied. Besides since $U_n$ must be an interval then $\I_{U_n}\in BV$ and $\|\I_{U_n}\|_{BV}\leq 2$. Moreover, by definition of $\Lambda$, we can apply Lemma~\ref{prop:Rn-continuous} and consequently obtain that $\lim_{n\to\infty}R_n=\infty$. Hence, we are now in condition of applying Theorem~\ref{thm:DC+R=>EVL}  and   Corollary~\ref{cor:DC+R=>Poisson} to obtain the result.
\end{proof}
%
%
%
%
%
%
%
\subsubsection{Piecewise expanding maps in higher dimensions}\label{subset:example-Saussol} As a second example, we will consider multidimensional piecewise uniformly expanding maps for which we follow the definition given by Saussol \cite{S00}. As pointed out in \cite{AFL11}, these maps generalize Markov maps which also contains one-dimensional piecewise uniformly expanding maps. 

We need some notation: $\dist(\cdot,\cdot)$ being the usual metric in $\R^N$, given any $\eps>0$, we introduce $B_{\eps}(x)=\{y\in\R^N: \dist(x,y)<\eps\}$. Moreover, $Z$ being a compact subset of $\R^N$, for any $A\subset Z$ and given a real number $c>0$, we write $B_c(A)=\{x\in \R^N: \dist(x,A)\leq c\}$; $Z^\circ$ stands for the interior of $Z$, and $\overline{Z}$ is the closure.
\begin{definition}[Multidimensional piecewise expanding system]
\label{def:MPE}
$(Z,f,\mu)$ is a \emph{multidimensional piecewise expanding system} if $Z$ is a compact  subset of $\R^N$ with $\overline{Z^\circ}=Z$, $f:Z\to Z$ and $\{Z_i\}$ is a family of at most countably many disjoint open sets such that $\l(Z\setminus\bigcup_{i} Z_i)=0$ and there exist open sets $\widetilde{Z_i}\supset\overline{Z_i}$ and $C^{1+\alpha}$ maps $f_i: \widetilde{Z_i}\to\R^N$, for some real number $0<\alpha\leq 1$ and some sufficiently small real number $\eps_1>0$  such that for all $i$,
\begin{enumerate}
\item $f_i(\widetilde{Z_i})\supset B_{\eps_1}(f(Z_i))$;\\
\item for $x,y\in f(Z_i)$ with $\dist(x,y)\leq\eps_1$,
$$|\det Df_i^{-1}(x)-\det Df_i^{-1}(y)|\leq c|\det Df_i^{-1}(x)|\dist(x,y)^\alpha;$$
\item there exists $s=s(f)<1$ such that $\forall x,y\in f(\widetilde{Z_i}) \textrm{ with } \dist(x,y)\leq\eps_1$, we have
$$\dist(f_i^{-1}x,f_i^{-1}y)\leq s\, \dist(x,y).$$
\item Let us put $G(\eps,\eps_1):=\sup_x G(x,\eps,\eps_1)$ where
\begin{equation}\label{GG}
G(x,\eps,\eps_1):=\sum_{i}\frac{\l(f_i^{-1}B_{\eps}(\partial f Z_i)\cup B_{(1-s)\eps_1}(x))}{\l(B_{(1-s)\eps_1}(x))}
\end{equation}
and assume that $\sup\limits_{\delta\leq\eps_1}\big(s^\alpha+2\sup\limits_{\eps\leq\delta}\frac{G(\eps)}{\eps^\alpha}\delta^\alpha\big)<1$.
\end{enumerate}
\end{definition}

Now, let us introduce the space of quasi-H\"older functions in which the spectrum of corresponding Perron-Frobenius operator is investigated.
Given a Borel set $\Gamma\subset Z$, we define the oscillation of $\varphi\in L^1(\l)$ over $\Gamma$ as
$$\mathrm{osc}(\varphi,\Gamma):=\esssup\limits_{\Gamma}\varphi-\essinf\limits_{\Gamma}\varphi.$$

It is easy to verify that $x\mapsto \mathrm{osc}(\varphi,B_{\eps}(x))$ defines a measurable function (see \cite[Proposition 3.1]{S00}). Given real numbers $0<\alpha\leq1$ and $\eps_0>0$, we define $\alpha$-seminorm of $\varphi$ as
$$|\varphi|_{\alpha}=\displaystyle\sup_{0<\eps\leq\eps_0}\eps^{-\alpha}\int_{\R^\N}\mathrm{osc}(\varphi,B_{\eps}(x))\,\dif\l(x).$$
Let us consider the space of functions with bounded $\alpha$-seminorm
$$V_\alpha=\{\varphi\in L^1(\l): |\varphi|_\alpha<\infty\},$$
and endow $V_\alpha$ with the norm
$$\|\cdot\|_\alpha=\|\cdot\|_{L^1(\l)}+|\cdot|_\alpha$$
which makes it into a Banach space. We note that $V_\alpha$ is independent of the choice of $\eps_0$.
According to \cite[Theorem 5.1]{S00}, there exists an absolutely continuous invariant probability measure (a.c.i.p.) $\mu$. Also in \cite[Theorem 6.1]{S00}, it is shown that on the mixing components $\mu$ enjoys exponential decay of correlations against $L^1$ observables on $V_\alpha$, more precisely, if the map $f$ is as defined above and if $\mu$ is the mixing a.c.i.p., then there exist constants $C<\infty$ and $\gamma<1$ such that
\begin{equation}\label{eq:doc-pen}
\Big|\int_{Z}\psi\circ f^n\, h\, \dif\mu \Big|\leq C\|\psi\|_{L^1}\|h\|_\alpha\gamma^n,\, \forall\psi\in L^1,\mbox{ where }\int\psi\,\dif\mu=0 \textrm{ and } \forall h\in V_\alpha.
\end{equation}
We refer the reader to \cite{S00} for the exact values of the above constants.\\

Let $\mathbb S=Z\setminus \bigcup_{i} Z_i$ and define $ \Lambda:=\{x\in Z: \, f^n(x)\notin \mathbb S,\;\text{for all}\; n\in\N_0 \}$. As a consequence of Theorem~\ref{thm:DC+R=>EVL}, Corollary~\ref{cor:DC+R=>Poisson} and Lemma~\ref{prop:Rn-continuous} it follows immediately:
%
%
%
%
%
\begin{proposition}
\label{prop:MPE}
Suppose that $(Z,f,\mu)$ is a topologically mixing multidimensional piecewise expanding system and $\mu$ is its a.c.i.p. Let $X_0, X_1, \ldots$ be given by \eqref{eq:def-stat-stoch-proc-DS}, where $\varphi$ achieves a global maximum at some point $\zeta$.  Then we have an EVL for $M_n$ and
\begin{enumerate}

\item if $\zeta\in\Lambda$ is not a periodic point then the EVL is such that  $\bar H(\tau)=\e^{-\tau}$  and the REPP $N_n$ converges in distribution to a standard Poisson process $N$ of intensity 1;

\item if $\zeta\in\Lambda$ is a (repelling) periodic point of prime period $p$ then the EVL is such that $\bar H(\tau)=\e^{-\vartheta\tau}$ where the EI is given by $\vartheta=1-|\det D(f^{-p})(\zeta)|$ and the REPP $N_n$ converges in distribution to a compound Poisson process $N$ with intensity $\vartheta$ and multiplicity d.f. $\pi$ given by
$
\pi(\kappa)=\vartheta(1-\vartheta)^\kappa,
$
for every $\kappa\in\N_0$.

\end{enumerate}
\end{proposition}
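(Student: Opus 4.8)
The plan is to mirror exactly the proof of Proposition~\ref{prop:Rychlik}, replacing the role of the $BV$ space and the Rychlik decay-of-correlations estimate \eqref{eq:Rychlik-DC} by the quasi-H\"older space $V_\alpha$ and the Saussol estimate \eqref{eq:doc-pen}. As with the Rychlik case, statement (2) is already known (it is the periodic-point half of the dichotomy proved in \cite{FFT12} and \cite{FFT12a}, with the periodic expansion rate now taking the form $\vartheta=1-|\det D(f^{-p})(\zeta)|$ because the relevant conformal/SRB density is the a.c.i.p.\ density w.r.t.\ Lebesgue); so the work is entirely in statement (1).

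For statement (1), first I would check that \eqref{eq:doc-pen} yields the hypothesis \eqref{DC:L1} of Theorem~\ref{thm:DC+R=>EVL} with $\mathcal C=V_\alpha$: given $\phi\in V_\alpha$ and $\psi\in L^1(\mu)$, apply \eqref{eq:doc-pen} to $\psi-\int\psi\,\dif\mu$ (which is admissible since it has zero $\mu$-integral and, as $\mu=h\l$ with $h$ bounded, lies in $L^1(\l)$ with comparable norm), obtaining exponential — hence certainly $O(n^{-2})$ — decay with a constant $C$ independent of the two observables. Next I would verify the remaining structural hypotheses of Theorem~\ref{thm:DC+R=>EVL}: by condition \ref{item:U-ball} the set $U_n$ is a topological ball in $Z\subset\R^N$, and one checks that $\|\I_{U_n}\|_\alpha$ is uniformly bounded in $n$ — indeed $\|\I_{U_n}\|_{L^1(\l)}=\l(U_n)\to0$, and the $\alpha$-seminorm of the indicator of a ball $B_r(\zeta)$ is bounded by a constant depending only on $N$ and $\alpha$ (the oscillation $\mathrm{osc}(\I_{B_r(\zeta)},B_\eps(x))$ is supported on the $\eps$-neighbourhood of the sphere $\partial B_r(\zeta)$, whose Lebesgue measure is $O(\eps r^{N-1})$ for small $\eps$, so $|\I_{B_r(\zeta)}|_\alpha=\sup_{\eps\le\eps_0}\eps^{-\alpha}O(\eps r^{N-1})$ stays bounded — and in fact tends to $0$ — as $r\to0$). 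So there is $C'>0$ with $\I_{U_n}\in V_\alpha$ and $\|\I_{U_n}\|_\alpha\le C'$ for all large $n$.

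Finally, for the condition $R_n\to\infty$: since $\zeta\in\Lambda$ its whole forward orbit avoids $\mathbb S=Z\setminus\bigcup_i Z_i$, so $f$ is continuous at every point $f^i(\zeta)$, $i\ge0$; as $\zeta$ is assumed non-periodic, Lemma~\ref{prop:Rn-continuous} applies directly and gives $\lim_{n\to\infty}R_n=\infty$. With \eqref{DC:L1}, the uniform bound on $\|\I_{U_n}\|_\alpha$, and $R_n\to\infty$ all in hand, Theorem~\ref{thm:DC+R=>EVL} gives $D_2(u_n)$ and $D'(u_n)$, hence the EVL with $\bar H(\tau)=\e^{-\tau}$; Corollary~\ref{cor:DC+R=>Poisson} then upgrades this to convergence of $N_n$ to the standard Poisson process. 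The only genuinely non-routine point is the uniform estimate on $\|\I_{U_n}\|_\alpha$ — i.e.\ controlling the quasi-H\"older seminorm of the indicator of a small ball — but this is a direct Lebesgue-measure computation on $\eps$-annuli and presents no real difficulty; everything else is bookkeeping that exactly parallels the Rychlik argument.
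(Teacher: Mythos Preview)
Your proposal is correct and follows essentially the same route as the paper's own proof: statement (2) is deferred to \cite{FFT12a}, and statement (1) is obtained by feeding Saussol's decay estimate \eqref{eq:doc-pen} (with $\mathcal C=V_\alpha$) into Theorem~\ref{thm:DC+R=>EVL} and Corollary~\ref{cor:DC+R=>Poisson}, after checking that $\|\I_{U_n}\|_\alpha$ is uniformly bounded and invoking Lemma~\ref{prop:Rn-continuous} via $\zeta\in\Lambda$. The paper simply asserts that the uniform bound on $\|\I_{U_n}\|_\alpha$ ``follows easily'' from the definition of $|\cdot|_\alpha$, whereas you actually sketch the annulus computation --- so your write-up is, if anything, slightly more detailed than the original at this one point.
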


\begin{proof}
Statement (2) has already been proved in \cite[Corollary~4]{FFT12a}.

For proving (1), we can start by remarking that the condition \eqref{DC:L1} is satisfied since we have \eqref{eq:doc-pen}. Since $U_n$ corresponds to a ball, by definition of $|\cdot|_\alpha$, it follows easily that $\I_{U_n}\in V_\alpha$ and $\|\I_{U_n}\|_{\alpha}$ is uniformly bounded by above.
Now, considering the definition of $\Lambda$, we can apply Lemma~\ref{prop:Rn-continuous} and consequently obtain that $\lim_{n\to\infty}R_n=\infty$. The result then follows by applying Theorem~\ref{thm:DC+R=>EVL} and Corollary~\ref{cor:DC+R=>Poisson}.
\end{proof}

\subsection{The extremal behaviour at discontinuity points}\label{subsec:discontinuity}

 In this section, we go back to Rychlik maps introduced in Section 3.2.1, but with finitely many branches, and study the extremal behaviour of the system when the orbit of $\zeta$ hits a discontinuity point of the map.

Consider a point $\zeta\in Y$. Note that here we consider at most finitely many collection of open intervals such that $\bigcup_i \overline{Y}_i\supset Y$. If $\zeta\in\Lambda$ then we say that $\zeta$ is a \emph{simple point}. If $\zeta$ is a \emph{non-simple point}, which means that $r_{\mathbb S}(\zeta)$ is finite, then let $\ell=r_{\mathbb S}(\zeta)$ and $z=f^\ell(\zeta)$. We will always assume that $z\in \mathbb S$ is such that: there exist $i^+,i^-\in\N$ so that $z$ is the right end point of $Y_{i^-}$ and the left end point of $Y_{i^+}$. We  consider that the point $z$ is doubled and has two versions: $z^+\in Y_{i^+}$ and $z^-\in Y_{i^-}$, so that $f(z^+):=f_{i^+}(z)=\lim_{x\to z,\, x\in Y_{i^+}}f(x)$ and $f(z^-):=f_{i^-}(z)=\lim_{x\to z,\, x\in Y_{i^-}}f(x)$. 
When $\zeta$ is a non-simple point we consider that its orbit bifurcates when it hits $\mathbb S$ and consider its two possible evolutions. We express this fact by saying that when $\zeta$ is non-simple we consider the ``orbits'' of $\zeta^+$ and $\zeta^-$ which are defined in the following way:
\begin{itemize}
\item for $j=1,\ldots, \ell$ we let $f^j(\zeta^\pm):=f^j(\zeta)$;

\item for $j=\ell+1$, we define $f^j(\zeta^\pm):=f_{i^\pm}(f^{j-1}(\zeta^\pm))$

\item for $j>\ell+1$ we consider two possibilities:

\begin{itemize}

\item if $j-1$ is such that $f^{j-1}(\zeta^\pm)\notin\mathbb S$, then we set $f^j(\zeta^\pm):=f(f^{j-1}(\zeta^\pm))$

\item otherwise we set $f^j(\zeta^\pm):=f_{i}(f^{j-1}(\zeta^\pm))$, where $i$ is such that $f^{j-\ell}(z^\pm)\in Y_i$

\end{itemize}

\end{itemize}

\begin{remark}\label{rem:2-evolutions}
Note that for the ``orbits'' of $\zeta^\pm$ just defined above, there is a sequence $(i_j^\pm)_{j\in\N}$ such that, for all $n\in\N$, we have $f^n(\zeta^\pm)\in \overline Y_{i_n^\pm}$ and $f^n(\zeta^\pm)=f_{i_n^\pm}\circ\cdots\circ f_{i_1^\pm}(\zeta)$. Also observe that, in the notation above, $i^\pm_\ell=i^\pm$.
\end{remark}

A non-simple point $\zeta$ is \emph{aperiodic} if for all $j\in\N$ we have $f^j(\zeta^+)\neq\zeta\neq f^j(\zeta^-)$.

If there exists $p^\pm$ such that $f^{p^\pm}(\zeta^\pm)=\zeta$ and for $j=1,\ldots,p^\pm-1$ we have $f^j(\zeta^\pm)\neq\zeta$, but,  for all $j \in\N$, we have $f^j(\zeta^\mp)\neq\zeta$, then we say that $\zeta$ is \emph{singly returning}. If $\zeta$ is singly returning and $f^\pm(\zeta^\pm)=\zeta^\pm$, which means that $f^{p^\pm}(z^\pm)\in Y_{i^\pm}$, then we say that $\zeta$ is a \emph{singly periodic}  point of period $p^\pm$. If $\zeta$ is singly returning and $f^{p^\pm}(\zeta^\pm)=\zeta^\mp$, which means that $f^{p^\pm}(z^\pm)\in Y_{i^\mp}$, then we say that $\zeta$ is an \emph{eventually aperiodic}  point.

If there exist $p^+$ and $p^-$ such that $f^{p^+}(\zeta^+)=\zeta=f^{p^-}(\zeta^-)$ and for $j=1,\ldots,p^+-1$ and $k=1,\ldots, p^--1$ we have $f^j(\zeta^+)\neq\zeta\neq f^k(\zeta^-)$, then we say that $\zeta$ is \emph{doubly returning}. In the case, $\zeta$ is a doubly returning point and both $f^{p^+}(\zeta^+)=\zeta^+$ and $f^{p^-}(\zeta^-)=\zeta^-$, then we say that $\zeta$ is \emph{doubly periodic} with periods $p^+$ and $p^-$, respectively. If $\zeta$ is doubly returning, $f^{p^\pm}(\zeta^\pm)=\zeta^\pm$ and $f^{p^\mp}(\zeta^\mp)=\zeta^\pm$ then we say that $\zeta$ is \emph{doubly returning with one switch}.  If $\zeta$ is doubly returning, $f^{p^\pm}(\zeta^\pm)=\zeta^\mp$ and $f^{p^\mp}(\zeta^\mp)=\zeta^\pm$ then we say that $\zeta$ is \emph{doubly returning with two switches}.

In what follows consider that
\[
U_n^\pm=U_n\cap f^{-\ell}(Y_{i^\pm}).
\]
The main goals of this section are to compute the EI and also the limit for the REPP at non-simple points as defined above. In the case of aperiodic non-simple points, the analysis is very similar to the one held for non-periodic points, in the previous sections, and we get an EI equal to 1 and the convergence of the REPP to the standard Poisson process. In the case of singly returning and doubly returning points, we have periodicity and consequently clustering. This means that the analysis should follow the footsteps of \cite{FFT12,FFT12a} with the necessary adjustments. For completeness we include a brief review of the results needed in Appendix~\ref{sec:periodicity}  and in particular the formulas \eqref{eq:EI-formula} and \eqref{eq:multiplicity} in Appendix~\ref{sec:EI-multiplicity-formulas}, that we will use to compute the EI and the multiplicity distribution of the limiting compound Poisson process.

\begin{proposition}\label{prop:Rychlik.discontinuous}
Suppose that $(Y,f,\psi)$ is a topologically mixing Rychlik system with finitely many branches, $\psi$ is H\"older continuous on each $\overline Y_i$, and $\mu=\mu_\psi$ is the corresponding equilibrium state.  Let $X_0, X_1, \ldots$ be given by \eqref{eq:def-stat-stoch-proc-DS}, where $\varphi$ achieves a global maximum at some point $\zeta\in Y\setminus \Lambda$.  Let $u_n$ be such that \eqref{eq:un} holds and $U_n$ be defined as in \eqref{def:Un}. 
We assume that $\mu(U_n^\pm)\sim\alpha^\pm\mu(U_n)$, where $0<\alpha^-,\alpha^+<1$ and $\alpha^-+\alpha^+=1$. Then we have an EVL for $M_n$ and\begin{enumerate}

\item if $\zeta$ is an aperiodic non-simple point then the EVL is such that  $\bar H(\tau)=\e^{-\tau}$;

\item if $\zeta$ is a non-simple, repelling singly returning point then the EVL is such that $\bar H(\tau)=\e^{-\vartheta\tau}$ where the EI is given by $\vartheta=1-\alpha^\pm\e^{S_{p^\pm}\psi(\zeta^\pm)}$;

\item if $\zeta$ is a non-simple, repelling doubly returning point, then the EVL is such that $\bar H(\tau)=\e^{-\vartheta\tau}$ where the EI is given by $\vartheta=1-\alpha^+\e^{S_{p^+}\psi(\zeta^+)}-\alpha^-\e^{S_{p^-}\psi(\zeta^-)}$, when $\zeta$ has no switches; $\vartheta=1-\alpha^{\pm}(\e^{S_{p^+}\psi(\zeta^+)}+\e^{S_{p^-}\psi(\zeta^-)})$, when $\zeta$ has one switch; $\vartheta=1-\alpha^-\e^{S_{p^+}\psi(\zeta^+)}-\alpha^+\e^{S_{p^-}\psi(\zeta^-)}$, when $\zeta$ has two switches.

\end{enumerate}

\end{proposition}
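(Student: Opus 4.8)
The plan is to treat the aperiodic case exactly as the non-periodic case in Section~\ref{subsec:proof-dichtomy}, and the singly/doubly returning cases along the lines of \cite{FFT12,FFT12a} as recalled in Appendix~\ref{sec:periodicity}; the only genuinely new feature will be a conformality estimate at the discontinuity $z=f^\ell(\zeta)$ that keeps track of which of the two halves $U_n^+$, $U_n^-$ of $U_n$ is responsible for each return near $\zeta$. Throughout we use that a topologically mixing Rychlik system with finitely many branches has exponential (hence $n^{-2}$) decay of correlations of $BV$ against $L^1(\mu_\psi)$ by \eqref{eq:Rychlik-DC}, so \eqref{DC:L1} holds with $\mathcal C=BV$, and that $U_n$, being an interval, satisfies $\I_{U_n}\in BV$ with $\|\I_{U_n}\|_{BV}\le 2$.

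\textbf{Case (1): $\zeta$ an aperiodic non-simple point.} By the previous remarks it suffices, via Theorem~\ref{thm:DC+R=>EVL}, to show $R_n:=R(U_n)\to\infty$, which is a variant of Lemma~\ref{prop:Rn-continuous}. Write $U_n=U_n^+\cup U_n^-$ up to the single point $\zeta$. For $1\le j\le\ell$ the map $f^j$ is continuous at $\zeta$, so the argument of Lemma~\ref{prop:Rn-continuous} rules out returns at those times once $n$ is large. For $j>\ell$ one tracks the two halves separately: by Remark~\ref{rem:2-evolutions}, for $n$ large and each fixed $j$ the set $f^j(U_n^\pm)$ is a neighbourhood of $f^j(\zeta^\pm)$ contained in $\overline Y_{i_j^\pm}$, since along the fixed itinerary $(i_k^\pm)_{k\le j}$ each branch $f_{i_k^\pm}$ is continuous on its closed domain, and moreover $\dm(f^j(U_n^\pm))\to0$. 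Aperiodicity gives $f^j(\zeta^\pm)\ne\zeta$ for all $j$, so fixing $j_0\in\N$ we may take $n$ so large that $f^j(U_n^\pm)\cap U_n=\es$ for $1\le j\le j_0$, whence $R_n>j_0$. Thus $R_n\to\infty$, Theorem~\ref{thm:DC+R=>EVL} applies and $\bar H(\tau)=\e^{-\tau}$.

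\textbf{Cases (2)--(3): $\zeta$ a returning point.} Here the orbit of $\zeta^+$ (and, in the doubly returning case, also that of $\zeta^-$) comes back to $\zeta$, so $R_n$ stays bounded and the hypothesis $R_n\to\infty$ of Theorem~\ref{thm:DC+R=>EVL} fails; instead we verify the periodic analogues of the dependence conditions from \cite{FFT12,FFT12a} recalled in Appendix~\ref{sec:periodicity} and read off $\vartheta$ from \eqref{eq:EI-formula}. The long-range condition follows from \eqref{eq:Rychlik-DC} exactly as $D_2(u_n)$ did; the anti-clustering condition reduces, via the $L^1$-decay estimate \eqref{eq:estimate1} applied to the part of $U_n$ that leaves the periodic block, together with a return-after-escape argument (once the orbit lands at a return time in the complementary half it follows the itinerary of a non-returning version of $\zeta$, so by a continuity argument as in Lemma~\ref{prop:Rn-continuous} it avoids $U_n$ for a time tending to $\infty$), to the following local computation. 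Fix a returning branch, say $+$ of period $p^+$, and put $W_n:=U_n\cap f^{-p^+}(U_n)$. For $n$ large only points of $U_n^+$ can have $f^{p^+}(x)$ near $\zeta$, so $W_n\subset U_n^+$, and $f^{p^+}$ restricted to $U_n^+$ is a diffeomorphism onto a neighbourhood of $\zeta=f^{p^+}(\zeta^+)$ lying, according to the classification, on the same side as $U_n^+$ when there is no switch on this branch and on the opposite side when there is one; hence $f^{p^+}(W_n)=U_n^{\bullet}$ with $\bullet=+$, resp. $-$, and $W_n=(f^{p^+}|_{U_n^+})^{-1}(U_n^{\bullet})$. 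Since $\psi$ is H\"older on each $\overline Y_i$ and, by Remark~\ref{rem:2-evolutions}, every orbit issued from $U_n^+$ visits up to time $p^+$ only the branches $\overline Y_{i_1^+},\dots,\overline Y_{i_{p^+}^+}$, we get $\sup_{x\in W_n}|S_{p^+}\psi(x)-S_{p^+}\psi(\zeta^+)|\to0$, so the conformality relation of Definition~\ref{def:conformal-measure} applied to $A=W_n$ gives
\[
m_\psi\big(U_n^{\bullet}\big)=m_\psi\big(f^{p^+}(W_n)\big)=\int_{W_n}\e^{-S_{p^+}\psi}\,\dif m_\psi\sim\e^{-S_{p^+}\psi(\zeta^+)}\,m_\psi(W_n),
\]
and then the regularity of $h\in BV$ near $\zeta$ together with the hypothesis $\mu(U_n^{\pm})\sim\alpha^{\pm}\mu(U_n)$ yields
\[
\frac{\mu\big(U_n\cap f^{-p^+}(U_n)\big)}{\mu(U_n)}=\frac{\mu(W_n)}{\mu(U_n)}\xrightarrow[n\to\infty]{}\alpha^{\bullet}\,\e^{S_{p^+}\psi(\zeta^+)}.
\]
For a doubly returning point the identical computation applies to the $-$ branch of period $p^-$, and since the two returns occur at asymptotically distinct times their contributions add; feeding these limits into \eqref{eq:EI-formula} produces $\vartheta=1-\alpha^{\bullet}\e^{S_{p^\pm}\psi(\zeta^\pm)}$ in case (2) and, depending on which of $+,-$ each $\bullet$ becomes according to the number of switches, the three stated values in case (3).

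\textbf{The main obstacle} will be the bookkeeping at the discontinuity: one must show rigorously that the returning set $W_n$ lies inside a single half $U_n^{\pm}$ and that $f^{p^\pm}$ carries it diffeomorphically onto the correct half $U_n^{\bullet}$ prescribed by the switching structure of Definition~\ref{def:Rychlik}, and that $S_{p^\pm}\psi$ is nearly constant on $W_n$ even though $\psi$ is only piecewise H\"older --- which works precisely because every orbit issued from $U_n^{\pm}$ follows the fixed itinerary $(i_j^{\pm})_j$ and hence never approaches $z$ from the wrong side. The accompanying anti-clustering estimate requires the analogous statement after an escape, namely that from the complementary half the orbit does not revisit $U_n$ for arbitrarily long times, which is again a continuity/aperiodicity argument in the spirit of Lemma~\ref{prop:Rn-continuous} carried out along a non-returning branch of $\zeta$.
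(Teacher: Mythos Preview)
Your proposal is correct and follows essentially the same route as the paper's proof. In both approaches, case~(1) is handled by showing $R(U_n)\to\infty$ via a two-branch version of Lemma~\ref{prop:Rn-continuous} (tracking the $\zeta^+$ and $\zeta^-$ itineraries separately), and cases~(2)--(3) by invoking the periodic machinery of \cite{FFT12,FFT12a} (Appendix~\ref{sec:periodicity}) with $D^p(u_n)$, $D'_p(u_n)$ obtained from \eqref{eq:Rychlik-DC} exactly as in \cite[Theorem~2]{FFT12a}, leaving only the local computation of $\vartheta$.

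The one cosmetic difference is in that local computation. The paper \emph{defines} the modified escape annulus $Q_p(u_n)$ case by case---e.g.\ $Q_p(u_n)=U_n^-\cup\big(U_n^+\setminus f^{-p}(U_n)\big)$ in the singly returning case, and for doubly returning points $Q_{p^-,p^+}(u_n)=\big(U_n^-\setminus f^{-p^-}(U_n^{\bullet_-})\big)\cup\big(U_n^+\setminus f^{-p^+}(U_n^{\bullet_+})\big)$ with $\bullet_\pm$ determined by the switching---and then reads off $\vartheta=\lim\mu(Q_p(u_n))/\mu(U_n)$ directly. You instead compute the complementary quantity $\mu(W_n)/\mu(U_n)$ with $W_n=U_n\cap f^{-p^\pm}(U_n)$, identify $f^{p^\pm}(W_n)=U_n^{\bullet}$ via the one-sided image at the endpoint $\zeta$, and use conformality of $m_\psi$ to get $\mu(W_n)\sim\alpha^{\bullet}\e^{S_{p^\pm}\psi(\zeta^\pm)}\mu(U_n)$. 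The two computations are equivalent; your conformality step is precisely what lies behind the paper's shorthand $\mu\big(U_n^+\setminus f^{-p}(U_n)\big)\sim(1-\e^{S_p\psi(\zeta^+)})\mu(U_n^+)$.
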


\begin{remark}
\label{rem:Keller-formula}
We remark that, in the particular case when $\mu_\psi$ is absolutely continuous with respect to the Lebesgue measure and the invariant density is continuous at the points $\zeta$ considered in the proposition above,  the formulas for the EI can be seen as special cases of the formula in \cite[Remark~8]{K12}. 
\end{remark}

\begin{proof}

If $\zeta$ is an aperiodic non-simple point then we just have to mimic the argument for non-periodic points in the previous sections. The proof of $D_2(u_n)$ is done exactly as before. Using decay of correlations against $L^1$, stated in \ref{eq:Rychlik-DC}, the proof that $D'(u_n)$ holds for these points follows the same footsteps except for the adjustments in order to consider the two possible evolutions corresponding to the ``orbits'' of $\zeta^+$ and $\zeta^-$. For example, to prove that $R(U_n)\to\infty$, as $n\to \infty$, in the argument of  Lemma~\ref{prop:Rn-continuous} we would define $$\epsilon=\min\left\{\min_{k=1,\ldots j}\dist(f^k(\zeta^+),\zeta), \min_{k=1,\ldots j}\dist(f^k(\zeta^-),\zeta)\right\},$$ 
and proceed as before.
%

When $\zeta$ is a non-simple (singly or doubly) returning point, we just need to adjust the definition \eqref{eq:def-Qp} of $Q_p(u_n)$ to cope with the two possibly different evolutions of $\zeta^+$ and $\zeta^-$. Everything else, namely the proofs of conditions $D^p(u_n)$ and $D'_p(u_n)$ follow from decay of correlations against $L^1$, stated in \ref{eq:Rychlik-DC}, exactly  in the same lines as in the proof of \cite[Theorem~2]{FFT12a}. Hence, essentially, for each different case we have to define coherently $Q_p(u_n)$ and compute the EI using formula \eqref{eq:EI-formula}.

Assume first that $\zeta$ is a singly returning (eventually aperiodic or not) non-simple point. Without loss of generality (w.l.o.g.), we also assume that there exists $p$ such that $f^p(\zeta^+)=\zeta^+$. In this case, we should define $Q_p(u_n)=U_n^-\cup (U_n^+\setminus f^{-p}(U_n))$. 
We can now compute the EI:
\begin{align*}
\vartheta&=\lim_{n\to\infty}\frac{\mu(Q_p(u_n))}{\mu(U_n)}=\lim_{n\to\infty}\frac{\mu(U_n^-)+(1-\e^{S_{p}\psi(\zeta^+)})\mu(U_n^+)}{\mu(U_n)}\\
&=\lim_{n\to\infty}\frac{\alpha^-\mu(U_n)+\alpha^+(1-\e^{S_{p}\psi(\zeta^+)})\mu(U_n)}{\mu(U_n)}=1-\alpha^+\e^{S_{p}\psi(\zeta^+)}.
\end{align*}

Let $\zeta$ be a non-simple, repelling doubly returning point  and $p^-,p^+$ be such that $f^{p^-}(\zeta^-)=\zeta$ and $f^{p^+}(\zeta^+)=\zeta$ . For definiteness, we assume w.l.o.g.\ that $p^-<p^+$.  

First we consider the case where no switching occurs. In this case, we have two different ``periods'', hence we should define $Q_{p^-,p^+}(u_n)=\left(U_n^-\setminus f^{-p^-}(U_n^-)\right)\cup\left(U_n^+\setminus f^{-p^+}(U_n^+)\right)$. It follows that
\begin{align*}
\vartheta&=\lim_{n\to\infty}\frac{\alpha^-(1-\e^{S_{p}\psi(\zeta^-)})\mu(U_n)+\alpha^+(1-\e^{S_{p}\psi(\zeta^+)})\mu(U_n)}{\mu(U_n)}=1-\alpha^-\e^{S_{p}\psi(\zeta^-)}-\alpha^+\e^{S_{p}\psi(\zeta^+)}.
\end{align*}

Next, we consider the case with one switch. In this case, we also have two different ``periods'' and for definiteness we assume w.l.o.g.\ that $f^{p^-}(\zeta^-)=\zeta^-$ and $f^{p^+}(\zeta^+)=\zeta^-$. Then we
define $Q_{p^-,p^+}(u_n)=\left(U_n^-\setminus f^{-p^-}(U_n^-)\right)\cup\left(U_n^+\setminus f^{-p^+}(U_n^-)\right)$. It follows that
\begin{align*}
\vartheta&=\lim_{n\to\infty}\frac{\alpha^-(1-\e^{S_{p}\psi(\zeta^-)})\mu(U_n)+\alpha^-(1-\e^{S_{p}\psi(\zeta^+)})\mu(U_n)}{\mu(U_n)}=1-\alpha^-\e^{S_{p}\psi(\zeta^-)}-\alpha^-\e^{S_{p}\psi(\zeta^+)}.
\end{align*}

Finally, we consider the case with two switches. In this case, we also have two different ``periods'' and we should define $Q_{p^-,p^+}(u_n)=\left(U_n^-\setminus f^{-p^-}(U_n^+)\right)\cup\left(U_n^+\setminus f^{-p^+}(U_n^-)\right)$. It follows that
\begin{align*}
\vartheta&=\lim_{n\to\infty}\frac{\alpha^+(1-\e^{S_{p}\psi(\zeta^-)})\mu(U_n)+\alpha^-(1-\e^{S_{p}\psi(\zeta^+)})\mu(U_n)}{\mu(U_n)}=1-\alpha^-\e^{S_{p}\psi(\zeta^+)}-\alpha^+\e^{S_{p}\psi(\zeta^-)}.
\end{align*}

\end{proof}
Next result gives the convergence of the REPP at non-simple points. Note that, contrarily to the usual geometric distribution obtained, for example, in \cite{HV09, CCC09, FFT12a}, in here, the multiplicity distribution is quite different. In fact, for eventually aperiodic returning points, for example, we have that $\pi(\kappa)=0$ for all $\kappa\geq 3$. 

\begin{proposition}\label{prop:Rychlik.discontinuous-REPP}
Let  $a^\pm:=\e^{S_{p^\pm}\psi(\zeta^\pm)}$. Under the same assumptions of Proposition~\ref{prop:Rychlik.discontinuous}, we have:  \begin{enumerate}

\item if $\zeta$ is an aperiodic non-simple point then the REPP converges to a standard Poisson process of intensity 1;

\item if $\zeta$ is a non-simple, singly returning point 
\begin{enumerate}
\item not eventually aperiodic then the REPP converges to a compound Poisson process of intensity $\vartheta$, given in Proposition~\ref{prop:Rychlik.discontinuous}, and multiplicity distribution defined by:
$$
\pi(1)=\tfrac{\vartheta-(1-\vartheta)(1-a^\pm)}{\vartheta}, \quad\pi(\kappa)=\tfrac{\alpha^\pm(1-a^\pm)^2 (a^\pm)^{-(\kappa-1)}}{\vartheta},\quad\kappa\geq2.
$$

\item eventually aperiodic then the REPP converges to a compound Poisson process of intensity  $\vartheta$, given in Proposition~\ref{prop:Rychlik.discontinuous}, and multiplicity distribution defined by:
$$
\pi(1)=\tfrac{2\vartheta-1}{\vartheta}, \quad\pi(2)=\tfrac{1-\vartheta}{\vartheta},\quad \pi(\kappa)=0,\quad\kappa\geq3.
$$
\end{enumerate}

\item if $\zeta$ is a non-simple, repelling doubly returning point
\begin{enumerate}
\item with no switches then the REPP converges to a compound Poisson process of intensity  $\vartheta$, given in Proposition~\ref{prop:Rychlik.discontinuous}, and multiplicity distribution defined by:
\begin{align*}
&\pi(1)=\tfrac{2\vartheta-1+\alpha^- (a^-)^2 +\alpha^+ (a^+)^2 }{\vartheta}\\
&\pi(\kappa)=\tfrac{\alpha^-(1-a^-)^2 (a^-)^{-(\kappa-1)}+\alpha^-(1-a^+)^2 (a^+)^{-(\kappa-1)}}{\vartheta},\;\kappa\geq2.
\end{align*}

\item with one switch then the REPP converges to a compound Poisson process of intensity  $\vartheta$, given in Proposition~\ref{prop:Rychlik.discontinuous}, and multiplicity distribution defined by:
$$
\pi(1)=\tfrac{2\vartheta-1+ a^\pm(1-\vartheta)}{\vartheta}, \;\pi(\kappa)=\tfrac{(1-\vartheta)(a^\pm)^{\kappa-2}(1-a^\pm)^2}{\vartheta},\;\kappa\geq2.
$$

\item with two switches then the REPP converges to a compound Poisson process of intensity  $\vartheta$, given in Proposition~\ref{prop:Rychlik.discontinuous}, and multiplicity distribution defined by:
\begin{align*}
&\pi(1)=\tfrac{1-2(1-\vartheta)+a^-a^+}{\vartheta}, \quad
\pi(2j)=\tfrac{(a^-a^+)^{j-1}\left((1-\vartheta)(1+a^-a^+)-2a^-a^+\right)}{\vartheta},\\
&\pi(2j+1)=\tfrac{(a^-a^+)^{j}\left(1-2(1-\vartheta)+a^-a^+\right)}{\vartheta},\;j\geq1.
\end{align*}
\end{enumerate}

\end{enumerate}
\end{proposition}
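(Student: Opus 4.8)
The plan is to reduce the statement to the abstract criteria for convergence of the REPP collected in Appendix~\ref{sec:periodicity} (following \cite{FFT12a}), and then to read off the multiplicity distribution $\pi$ in each case from formula \eqref{eq:multiplicity}. The dynamical input is exactly the one already exploited in the proofs of Theorem~\ref{thm:DC+R=>EVL} and Proposition~\ref{prop:Rychlik.discontinuous}, namely decay of correlations against $L^1$ in the form \eqref{eq:Rychlik-DC} together with the uniform bound $\|\I_{U_n}\|_{BV}\le 2$, so no mixing condition will require a new verification. For the aperiodic non-simple case I would argue as in Corollary~\ref{cor:DC+R=>Poisson}: the pseudo-orbits of $\zeta^+$ and $\zeta^-$ never return to $\zeta$, so the argument of Lemma~\ref{prop:Rn-continuous}, with $\epsilon$ taken as the minimum of the distances along these two pseudo-orbits (exactly as in the proof of Proposition~\ref{prop:Rychlik.discontinuous}), gives $R_n\to\infty$; hence $D_3(u_n)$ and $D'(u_n)$ hold for $X_0,X_1,\dots$, and \cite[Theorem~5]{FFT10} yields convergence of $N_n$ to the standard Poisson process of intensity $1$.

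For the singly and doubly returning cases I would invoke the compound Poisson convergence theorem for periodic points from \cite{FFT12a}, with the escape sets $Q_p(u_n)$, respectively $Q_{p^-,p^+}(u_n)$, taken as in the proof of Proposition~\ref{prop:Rychlik.discontinuous}. There it is already observed that conditions $D^p(u_n)$ and $D'_p(u_n)$ — and the point-process strengthening needed here — follow from \eqref{eq:Rychlik-DC} and the uniform $BV$ bounds on $\I_{U_n}$ and on the indicators of the escape sets, in the same way as $D_2(u_n)$ and $D'(u_n)$; I would not repeat those verifications. Granting them, $N_n\xrightarrow[]{d}N$, where $N$ is a compound Poisson process whose intensity is the value $\vartheta$ already computed in Proposition~\ref{prop:Rychlik.discontinuous} via \eqref{eq:EI-formula}, and whose multiplicity distribution $\pi$ is given by \eqref{eq:multiplicity}.

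The remaining work, and the main obstacle, is to evaluate \eqref{eq:multiplicity} in each of the five periodic sub-cases. The local ingredient is that, $\zeta^\pm$ being repelling, for small $U_n$ the branch composition realising a return of $\zeta^\pm$ contracts $\mu$-measure by a factor asymptotic to $a^\pm=\e^{S_{p^\pm}\psi(\zeta^\pm)}$ (the same conformality-and-density-continuity estimate behind the formulas for $\vartheta$), while the splitting of $U_n$ into $U_n^+$ and $U_n^-$ carries the weights $\alpha^+,\alpha^-$. Consequently the measures of the nested intersections of preimages of $U_n^+,U_n^-$ that enter \eqref{eq:multiplicity} are, up to asymptotic equivalence, products of the $a^\pm$'s along the sequence of branches that a cluster follows, weighted by an $\alpha^\pm$ and by $\mu(U_n)$; which branch sequences are \emph{admissible} is dictated by the case at hand (singly returning, eventually aperiodic or not; doubly returning with zero, one or two switches). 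Substituting these asymptotics into \eqref{eq:multiplicity} and summing the resulting finite geometric-type series over admissible branch sequences of each length $\kappa$ produces the stated $\pi$; in particular, the truncation $\pi(\kappa)=0$ for $\kappa\ge3$ in the eventually aperiodic case and the separate even/odd formulas in the two-switch case are precisely the reflection of which sequences are admissible. The delicate point is carrying out this bookkeeping of admissible branch sequences correctly in each case — matching each switching rule to the combinatorics of the nested intersections — since a single mis-placed switch alters the multiplicity distribution.
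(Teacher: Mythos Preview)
Your proposal is correct and follows essentially the same approach as the paper: the aperiodic case is handled via $D_3(u_n)$, $D'(u_n)$ and \cite[Theorem~5]{FFT10}, while in the returning cases the paper likewise invokes the compound Poisson machinery of \cite{FFT12a}, notes that $D_p(u_n)^*$ and $D'_p(u_n)^*$ follow from \eqref{eq:Rychlik-DC} just as in \cite[Theorem~2]{FFT12a}, and then computes $\pi$ from \eqref{eq:multiplicity}. The paper's proof differs only in that it carries out explicitly the bookkeeping you flag as the delicate point: for each sub-case it writes down concretely the nested sets $U^{(\kappa)}$ (adapted to the two pseudo-orbits of $\zeta^\pm$), evaluates $\p(Q^\kappa)\sim\p(U_n)\cdot(\text{product of }a^\pm\text{'s along the admissible sequence})$, and plugs into \eqref{eq:multiplicity}.
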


\begin{proof}
When $\zeta$ is an aperiodic non-simple point then as we have seen in Proposition~\ref{prop:Rychlik.discontinuous}, condition $D'(u_n)$ holds. Clearly, $D_3(u_n)$ follows from decay of correlations and, by \cite[Theorem~5]{FFT10}, we easily conclude that the REPP converges to the standard Poisson process of intensity 1.

When $\zeta$ is a non-simple (singly or doubly) returning point, we just need to adjust the definition of the sets $U^{(\kappa)}$ given in \eqref{eq:Uk-definition}, which ultimately affects the sets $Q_p^\kappa(u)$, given in \eqref{eq:Q-definition},  in order to cope with the two possibly different evolutions of $\zeta^+$ and $\zeta^-$. Everything else, namely the proofs of conditions $D^p(u_n)^*$ and $D'_p(u_n)^*$ follow from decay of correlations against $L^1$, stated in \ref{eq:Rychlik-DC}, exactly  in the same lines as in the proof of \cite[Theorem~2]{FFT12a}. Hence, essentially, for each different case we have to define coherently the sets $U^{(\kappa)}$ and compute the multiplicity distribution  using formula \eqref{eq:multiplicity}.

In all cases, $U^{(0)}=U_n=U_n^-\cup U_n^+$.

Let $\zeta$ be a singly returning non-simple point which is not eventually aperiodic. We assume w.l.o.g.\ that there exist $p$ such that $f^{p}(\zeta^+)=\zeta^+$ and $f^{j}(\zeta^-)\neq\zeta$, for all $j\in\N$. For every $\kappa\in\N$, we define
$$
U^{(\kappa)}:=\left(\bigcap_{i=0}^{\kappa}f^{-ip}(U_n^+)\right)
$$
Using  \eqref{eq:Q-definition}, we can now easily define $Q^\kappa:=U^{(\kappa)}\setminus U^{(\kappa+1)}$, for all $\kappa\geq0$. We have $\p(Q^0)\sim\p(U_n)-a^+\p(U_n^+)\sim\p(U_n)(1-\alpha^+a^+)$. The same computations would lead us to $\p(Q^\kappa)\sim \p(U_n)(\alpha^+(1-a^+)(a^+)^\kappa)$. Using formula \eqref{eq:multiplicity}, it follows:
\begin{align*}
\pi(1)&=\lim_{n\to\infty}\tfrac{\p\left(Q^0\right)-\p\left(Q^1\right)}{\p\left(Q^0\right)}=\tfrac{(1-\alpha^+a^+)-(\alpha^+(1-a^+)a^+)}{(1-\alpha^+a^+)}=\tfrac{\vartheta-(1-\vartheta)(1-a^\pm)}{\vartheta}\\
\pi(\kappa)&=\lim_{n\to\infty}\tfrac{\p\left(Q^{\kappa-1}\right)-\p\left(Q^{\kappa}\right)}{\p\left(Q^0\right)}=\tfrac{(\alpha^+(1-a^+)(a^+)^{\kappa-1})-(\alpha^+(1-a^+)(a^+)^\kappa)}{(1-\alpha^+a^+)}=\tfrac{\alpha^+(1-a^+)^2(a^+)^{\kappa-1}}{\vartheta}.\end{align*}

Let $\zeta$ be a singly returning non-simple point which is eventually aperiodic. We assume w.l.o.g. that there exist $p$ such that $f^{p}(\zeta^+)=\zeta^-$ and $f^{j}(\zeta^-)\neq\zeta$, for all $j\in\N$. For every $\kappa\in\N$, we define
$$
U^{(1)}:=\left(U_n^+\cap f^{-ip}(U_n^-)\right),\quad U^{(\kappa)}:=\emptyset,\quad \kappa\geq 2
$$
Note that $Q^0=U_n\setminus U^{(1)}$, $Q^1=U^{(1)}$ and $Q^\kappa=\emptyset$, for all $\kappa\geq2$. We have $\p(Q^0)\sim\p(U_n)-a^+\p(U_n^-)\sim\p(U_n)(1-\alpha^-a^+)$, $\p(Q^1)\sim \p(U_n)\alpha^-a^+$, $\p(Q^\kappa)=0$, for all $\kappa\geq 2$. Using formula \eqref{eq:multiplicity}, it follows:
\begin{align*}
\pi(1)&=\lim_{n\to\infty}\tfrac{\p\left(Q^0\right)-\p\left(Q^1\right)}{\p\left(Q^0\right)}=\tfrac{(1-\alpha^-a^+)-(\alpha^-a^+)}{(1-\alpha^-a^+)}=\tfrac{2\vartheta-1}{\vartheta}\\
\pi(2)&=\lim_{n\to\infty}\tfrac{\p\left(Q^{1}\right)-\p\left(Q^{2}\right)}{\p\left(Q^0\right)}=\tfrac{\alpha^-a^+}{(1-\alpha^-a^+)}=\tfrac{1-\vartheta}{\vartheta}, \quad \pi(\kappa)=0,\quad k\geq 3.\end{align*}

Let $\zeta$ be a doubly returning non-simple point with no switches. Let $p^-, p^+$ be such that $f^{p^-}(\zeta^-)=\zeta^-$ and $f^{p^+}(\zeta^+)=\zeta^+$. For every $\kappa\in\N$, we define
$$
U^{(\kappa)}:=\left(\bigcap_{i=0}^{\kappa}f^{-ip^-}(U_n^-)\right)\bigcup\left(\bigcap_{i=0}^{\kappa}f^{-ip^+}(U_n^+)\right).
$$
Note that using  \eqref{eq:Q-definition}, we can now easily define $Q^\kappa:=U^{(\kappa)}\setminus U^{(\kappa+1)}$, for all $\kappa\geq0$. We have $\p(Q^0)\sim\p(U_n)-a^-\p(U_n^-)-a^+\p(U_n^+)\sim\p(U_n)(1-\alpha^-a^--\alpha^+a^+)$. The same computations would lead us to $\p(Q^\kappa)\sim \p(U_n)(\alpha^-(1-a^-)(a^-)^\kappa+\alpha^+(1-a^+)(a^+)^\kappa)$. Using formula \eqref{eq:multiplicity}, it follows:
\begin{align*}
\pi(1)&=\lim_{n\to\infty}\tfrac{\p\left(Q^0\right)-\p\left(Q^1\right)}{\p\left(Q^0\right)}=\tfrac{(1-\alpha^-a^--\alpha^+a^+)-(\alpha^-(1-a^-)a^-+\alpha^+(1-a^+)a^+)}{(1-\alpha^-a^--\alpha^+a^+)}=\tfrac{2\vartheta-1+\alpha^- (a^-)^2 +\alpha^+ (a^+)^2 }{\vartheta}\\
\pi(\kappa)&=\lim_{n\to\infty}\tfrac{\p\left(Q^{\kappa-1}\right)-\p\left(Q^{\kappa}\right)}{\p\left(Q^0\right)}=\tfrac{\alpha^-(1-a^-)^2(a^-)^{\kappa-1}+\alpha^+(1-a^+)^2(a^+)^{\kappa-1}}{\vartheta}.\end{align*}

Let $\zeta$ be a doubly returning non-simple point with one switch. We assume w.l.o.g.\ that there exist $p^-, p^+$ such that $f^{p^-}(\zeta^-)=\zeta^-$ and $f^{p^+}(\zeta^+)=\zeta^-$. For every $\kappa\in\N$, we define
$$
U^{(\kappa)}:=\left(\bigcap_{i=0}^{\kappa}f^{-ip^-}(U_n^-)\right)\bigcup\left(U_n^+\cap f^{-p^+}(U_n^-)\cap\bigcap_{i=0}^{\kappa}f^{-p^+-ip^-}(U_n^-)\right).
$$
Using  \eqref{eq:Q-definition}, we can now easily define $Q^\kappa:=U^{(\kappa)}\setminus U^{(\kappa+1)}$, for all $\kappa\geq0$. We have $\p(Q^0)\sim\p(U_n)-a^-\p(U_n^-)-a^+\p(U_n^-)\sim\p(U_n)(1-\alpha^-a^--\alpha^-a^+)$. The same computations would lead us to $\p(Q^\kappa)\sim \p(U_n)(\alpha^-(1-a^-)(a^-)^\kappa+\alpha^-(1-a^-)a^+(a^-)^{\kappa-1})$. Using formula \eqref{eq:multiplicity}, it follows:
\begin{align*}
\pi(1)&=\lim_{n\to\infty}\tfrac{\p\left(Q^0\right)-\p\left(Q^1\right)}{\p\left(Q^0\right)}=\tfrac{(1-\alpha^-(a^-+a^+))-(\alpha^-(1-a^-)a^-+\alpha^-(1-a^-)a^+)}{(1-\alpha^-a^--\alpha^-a^+)}=\tfrac{2\vartheta-1+ a^-(1-\vartheta)}{\vartheta}\\
\pi(\kappa)&=\lim_{n\to\infty}\tfrac{\p\left(Q^{\kappa-1}\right)-\p\left(Q^{\kappa}\right)}{\p\left(Q^0\right)}=\tfrac{\alpha^-(1-a^-)^2(a^-)^{\kappa-1}+\alpha^-(1-a^-)^2a^+(a^-)^{\kappa-2}}{\vartheta}=\tfrac{(1-\vartheta)(a^-)^{\kappa-2}(1-a^-)^2}{\vartheta}.\end{align*}

Let $\zeta$ be a doubly returning non-simple point with two switches. We assume that there exist $p^-, p^+$ such that $f^{p^-}(\zeta^-)=\zeta^+$ and $f^{p^+}(\zeta^+)=\zeta^-$. For every $j\in\N_0$, we define
\begin{align*}
U^{(2j+1)}:=\left(U_n^-\cap\bigcap_{i=1}^{j+1}f^{-ip^--(i-1)p^+}(U_n^+)\cap\bigcap_{i=1}^{j}f^{-ip^--ip^+}(U_n^-)\right)\\\bigcup\left(U_n^+\cap\bigcap_{i=1}^{j+1}f^{-ip^+-(i-1)p^-}(U_n^-)\cap\bigcap_{i=1}^{j}f^{-ip^+-ip^-}(U_n^+)\right)\\
U^{(2j)}:=\left(U_n^-\cap\bigcap_{i=1}^{j}f^{-ip^--(i-1)p^+}(U_n^+)\cap\bigcap_{i=1}^{j}f^{-ip^--ip^+}(U_n^-)\right)\\\bigcup\left(U_n^+\cap\bigcap_{i=1}^{j}f^{-ip^+-(i-1)p^-}(U_n^-)\cap\bigcap_{i=1}^{j}f^{-ip^+-ip^-}(U_n^+)\right)\\
\end{align*}
Using  \eqref{eq:Q-definition}, we can now easily define $Q^\kappa:=U^{(\kappa)}\setminus U^{(\kappa+1)}$, for all $\kappa\geq0$. We have $\p(Q^0)\sim\p(U_n)-a^-\p(U_n^+)-a^+\p(U_n^-)\sim\p(U_n)(1-\alpha^+a^--\alpha^-a^+)$. The same computations would lead us to $\p(Q^{2j})\sim \p(U_n)(1-\alpha^+a^--\alpha^-a^+)(a^-a^+)^j$ and $\p(Q^{2j})\sim \p(U_n)(\alpha^+a^-+\alpha^-a^+-a^-a^+)(a^-a^+)^j$. Using formula \eqref{eq:multiplicity}, it follows that, for every $j\in\N$:
\begin{align*}
&\pi(1)=\lim_{n\to\infty}\tfrac{\p\left(Q^0\right)-\p\left(Q^1\right)}{\p\left(Q^0\right)}=\tfrac{(1-\alpha^+a^--\alpha^-a^+)-(\alpha^+a^-+\alpha^-a^+-a^-a^+)}{(1-\alpha^+a^--\alpha^-a^+)}=\tfrac{1-2(1-\vartheta)+a^-a^+}{\vartheta}\\
&\pi(2j)=\lim_{n\to\infty}\tfrac{\p\left(Q^{2j-1}\right)-\p\left(Q^{2j}\right)}{\p\left(Q^0\right)}=\tfrac{(a^-a^+)^{j-1}(\alpha^+a^-(1+a^-a^+)+\alpha^-a^+(1+a^-a^+)-2a^-a^+)}{\vartheta}\\
&\pi(2j+1)=\lim_{n\to\infty}\tfrac{\p\left(Q^{2j}\right)-\p\left(Q^{2j+1}\right)}{\p\left(Q^0\right)}=\tfrac{(a^-a^+)^{j}(1-2\alpha^+a^--2\alpha^-a^++a^-a^+)}{\vartheta}.
\end{align*}

\end{proof}

\section{Extremes for random dynamics}
\label{sec:random}
In this section we will start with the proof of Theorem~\ref{thm:random-EVL} which states that the dichotomy observed in Section~\ref{sec:dichotomy} vanishes when we add absolutely continuous noise (w.r.t.\ Lebesgue) and for every chosen point in the phase space we have a standard exponential distribution for the EVL and HTS/RTS weak limits. We will also certify that the REPP converges to a Poisson Process with intensity $1$. Next, we will give some examples of random dynamical systems for which we can prove the existence of EVLs and HTS/RTS as well as the convergence of REPP.

In what follows, we denote the diameter of set $A\subset\mathcal M$ by $|A|:=\sup\{\dist(x,y):\;x,y\in A\}$ and for any $x\in \mathcal M$ we define the translation of $A$ by $x$ as the set: $A+x:=\{a+x:\;a\in A\}$.

\subsection{Laws of rare events for random dynamics}\label{subsec:random-EVL}

\begin{proof}[Proof of Theorem~\ref{thm:random-EVL}]

Firstly, we want to show that, as in the deterministic case, the condition $D_2(u_n)$ can be deduced from the decay of correlations.

From our assumption the random dynamical system has (annealed) decay of correlations, \ie  there exists a Banach space $\mathcal C$ of real valued functions such that 
for all $\phi\in\mathcal C$ and $\psi\in L^1(\mu_\varepsilon)$,
\begin{equation}
\label{RDC:L12} 
\Big|\int\phi ({\mathcal
U}^t_{\varepsilon} \psi)(x)\, \dif\mu_{\varepsilon}-\int\phi\,
\dif\mu_\varepsilon\int\psi\, \dif\mu_\varepsilon\Big|\le C
\|\phi\|_{\mathcal C}\|\psi\|_{L^1(\mu_\varepsilon)} t^{-2}
\end{equation}
where $C>0$ is a constant independent of both $\varphi$ and $\psi$.

In proving $D_2(U_n)$, the main point is to choose the right observable. We take
\begin{equation*}
\phi(x)=\I_{\{ X_0>u_n \}}=\I_{\{\varphi(x)>u_n\}} \label{def:obs_phi}, \;
\psi(x)=\displaystyle\int \I_{\{ \varphi(x),\, \varphi\circ f_{{\tilde\omega}_1}(x), \,\ldots\,,\, \varphi\circ f^{\ell-1}_{\tilde\o}(x)\leq u_n\}}\, \dif\theta_\varepsilon^{\ell-1} (\tilde\o).
\end{equation*}

Substituting $\psi$ in the random evolution operator, we get
\begin{align*}
({\mathcal U}^t_{\varepsilon}
\psi)(x)=\iint\I_{\{\varphi\circ f^{t}_{\o}(x),\, \ldots\, ,\,
\varphi\circ f^{\ell-1}_{\tilde\o}\circ f^{t}_{\o}(x) \leq
u_n\}}\,\dif\theta^{\ell-1}_\varepsilon (\tilde\o)
\dif\theta^{t}_\varepsilon (\o).
\end{align*}

Since all $\omega_i$'s and $\tilde\omega_j$'s are chosen in an i.i.d.\ structure, we can rename the random iterates, \ie we lose no information in writing
\begin{equation*}
({\mathcal U}^t_{\varepsilon} \psi)(x)=\displaystyle\int
\I_{\{\varphi\circ f^{t}_{\o}(x),\,\ldots\, ,\, \varphi\circ f^{t+\ell-1}_{\o}(x)
\leq u_n\}} \dif\theta^ \N_\varepsilon (\o).
\end{equation*}

Therefore, we get
\begin{eqnarray*}
\int\phi(x)\, ({\mathcal U}^t_{\varepsilon} \psi)(x)
\,\dif\mu_\varepsilon=\int\mu_\varepsilon\Big(\varphi(x)>u_n,\varphi\circ f^{t}_{\o}(x)\leq
u_n,\,\ldots,\varphi\circ f^{t+\ell-1}_{\o}(x) \leq u_n\Big)\dif\theta^
\N_\varepsilon(\o).
\end{eqnarray*}

On the other hand,
\begin{eqnarray*}
\int\phi(x) \dif\mu_\varepsilon&=& \mu_\varepsilon(X_0(x)>u_n)=\int \mu_{\varepsilon}(X_0(x)>u_n)\, \dif\theta^\N_\varepsilon (\o) \\
\int\psi(x) \dif\mu_{\varepsilon}&=& \displaystyle\int \Big(\int \I_{\{\varphi(x), \varphi\circ f_{\omega_1}(x),\,\ldots\,,\varphi\circ f^{\ell-1}_{\o}(x)\leq u_n \}} \dif\mu_{\varepsilon}\Big)\dif\theta^{\N}_{\varepsilon} (\o)\\
&=&\displaystyle\int \mu_{\varepsilon}\Big(\varphi(x)\leq u_n, \varphi\circ f_{\omega_1}(x)\leq u_n,\,\ldots\,,\varphi\circ f^{\ell-1}_{\o}(x)\leq u_n \Big)\dif\theta^{\N}_{\varepsilon} (\o).
\end{eqnarray*}

Now, the decay of correlations can be written as
\begin{multline*}
\hspace{-.2cm}\Big|\int\mu_\varepsilon\big(X_0(x)>u_n,\varphi\circ f^{t}_{\o}(x)\leq u_n,\,\ldots\,, \varphi\circ f^{t+\ell-1}_{\o}(x) \leq u_n \big)\, \dif\theta^ \N_\varepsilon (\o)-\\
\int\mu_{\varepsilon}(\varphi(x)>u_n)\, \dif\theta^ \N_\varepsilon (\o) \int\mu_{\varepsilon}\big(\varphi(x)\leq u_n, \varphi\circ f_{\omega_1}(x)\leq u_n,\,\ldots\,, \varphi\circ f_{\o}^{\ell-1}(x)\leq u_n \big)\, \dif\theta^ \N_\varepsilon (\o)\Big|\\
\leq C \|\phi\|_{\mathcal C}\|\psi\|_{L^1(\mu_\varepsilon)} t^{-2}
\end{multline*}

which leads us to the conclusion that the condition $D_2(u_n)$ holds with
\begin{equation}
\gamma(n,t)=\gamma(t)=C^{*}t^{-2}
\end{equation}
for some $C^{*}>0$ and $t_n=n^{\beta}$, with $1/2<\beta<1$.

For proving $D'(u_n)$, the basic idea is to use the fact that we have decay of correlations against $L^1$ as in Theorem ~\ref{thm:DC+R=>EVL} and then to show that except for a small set of $\o$'s, $R^{\o}(U_n)$ grows at a sufficiently fast rate. Hence, we split $\Omega$ into two parts: the $\o$'s for which $R^{\o}(U_n)>\alpha_n$, where $(\alpha_n)_n$ is some sequence such that
\begin{equation}
\label{eq:alpha-n}
\alpha_n\to\infty\qquad\mbox{and }\qquad\alpha_n=o(\log k_n),
\end{equation} which is designed, on one hand, to guarantee that for the $\o$'s for which $R^{\o}(U_n)>\alpha_n$, the argument using decay of correlations against $L^1$ is still applicable and, on the other hand, the set of the $\o$'s for which $R^{\o}(U_n)\leq\alpha_n$ has $\theta_\eps^\N$ small measure. To show the latter we make an estimate on the $\o$'s that take the orbit of $\zeta$ too close to itself.

First, note that since $f$ is continuous (which implies that $f_{\o}^j$ is also continuous for all $j\in\N$) and $\eta$ is the highest rate at which points can separate, the diameter of $f_{\o}^j(U_n)$ grows at most at a rate given by $\eta^j$, so, for any $\o\in\Omega$ we have $|f_{\o}^j(U_n)|\leq \eta^j |U_n|$. This implies that
\begin{equation}
\label{eq:fact1}
\mbox{if $\dist(f_{\o}^j(\zeta),\zeta)>2\eta^j|U_n|>|U_n|+\eta^j|U_n|$ then $f_{\o}^j(U_n)\cap U_n=\emptyset$}.
\end{equation}
Note that, by equation \eqref{eq:fact1}, if for all $j=1,\ldots,\alpha_n$ we have $\dist(f_{\o}^j(\zeta),\zeta)>2\eta^j|U_n|$ then clearly $R^{\o}(U_n)>\alpha_n$. Hence, we may write that $\big\{\o:\; R^{\o}(U_n)\leq\alpha_n\big\}\subset \bigcup_{j=1}^{\alpha_n} \big\{\o:\; f_{\o}^j(\zeta)\in B_{2\eta^j|U_n|}(\zeta)\big\}.$
It follows that, there exists some $C>0$ such that
\begin{align*}
\theta_\eps^\N\big(\big\{\o:\; R^{\o}(U_n)&\leq\alpha_n\big\}\big)\leq \sum_{j=1}^{\alpha_n} \int \theta_\eps\Big(\Big\{\omega_j: f\left(f_{\o}^{j-1}(\zeta)\right)+\omega_j\in  B_{2\eta^j|U_n|}(\zeta) \Big\}\Big)\dif\theta^\N_\eps\\
&= \sum_{j=1}^{\alpha_n}\int\theta_\eps\Big(\Big\{\omega_j: \omega_j\in  B_{2\eta^j|U_n|}(\zeta)-f\left(f_{\o}^{j-1}(\zeta)\right)\Big\}\Big)\dif\theta^\N_\eps\\
&= \sum_{j=1}^{\alpha_n} \iint_{B_{2\eta^j|U_n|}(\zeta)-f\left(f_{\o}^{j-1}(\zeta)\right)}g_\eps(x) \dif\l\,\dif\theta^\N_\eps\\
&\leq \sum_{j=1}^{\alpha_n} \overline{g_\eps}\l\left(B_{2\eta^j|U_n|}(\zeta)\right)=\sum_{j=1}^{\alpha_n} \overline{g_\eps} C\eta^j\l(U_n)\leq C\overline{g_\eps} \l(U_n)\frac{\eta}{\eta-1}\eta^{\alpha_n}.
\end{align*}


Now, observe that
\begin{align*}
n\sum_{j=1}^{\lfloor n/k_n \rfloor}\p(U_n\cap f_{\o}^{-j}(U_n))&
\leq n\sum_{j=\alpha_n}^{\lfloor n/k_n \rfloor}\p\Big(\big\{(x,\o):x\in U_n,\, f_{\o}^j(x)\in U_n\big\}\Big)\\
&\quad+n\sum_{j=1}^{\lfloor n/k_n \rfloor}\p\Big(\big\{(x,\o):x\in U_n, R^{\o}(U_n)\leq\alpha_n\big\}\Big):=I+I\!I.
\end{align*}
Let us start by estimating $I$, 
which will be dealt as in Section~\ref{sec:dichotomy}. Taking $\psi=\phi=\I_{U_n}$ in \eqref{RDC:L12} and since $\|\I_{U_n}\|_{\mathcal C}\leq C'$ we get
\begin{align}
\label{eq:estimate2}
\p\big(\{(x,\o):x\in U_n,\, f_{\o}^j(x)\in\ U_n\}\big) &\le
 (\sm (U_n))^2+C \left\| \I_{U_n}\right\|_{\mathcal C} \left\| \I_{U_n}\right\|_{L^1(\mu_\varepsilon)} j^{-2}\notag\\&\leq  (\sm(U_n))^2+C^*\mu_{\varepsilon}(U_n)j^{-2},
\end{align}
where $C^*=CC'>0$.
Now observe that by definition of $U_n$ and \eqref{eq:un}, we have that $\mu_\epsilon(U_n)\sim \tau/n$. Using this observation together with the definition of $R^{\o}_n$ and the estimate \eqref{eq:estimate2}, it follows that there exists some constant $D>0$ such that
\begin{align*}
 n\sum_{j=\alpha_n}^{\lfloor n/k_n \rfloor}& \p\big(\{(x,\o):x\in U_n,\, f_{\o}^j(x)\in\ U_n
 \}\big)\leq n\big\lfloor\tfrac {n}{k_n}\big\rfloor\sm(U_n)^2 +n\,C^*\mu_{\varepsilon}(U_n) \sum_{j=\alpha_n}^{\lfloor n/k_n \rfloor}j^{-2}
\\& \leq \frac{(n\sm(U_n))^2}{k_n} +n\,C^*\mu_{\varepsilon}(U_n) \sum_{j=\alpha_n}^{\infty}j^{-2}\leq D \left(\frac{\tau^2}{k_n}+\tau \sum_{j=\alpha_n}^{\infty}j^{-2} \right)\xrightarrow[n\to\infty]{}0.
\end{align*}
For the term $I\!I$, 
as $\mu_\eps(U_n)\sim \tau/n$ and since $d\mu_\eps/d\l$ is bounded below and above by positive constants, there exists  some positive constant $C^*>0$ so that
\begin{eqnarray}
n\sum_{j=1}^{\lfloor n/k_n \rfloor}\p\big(\{(x,\o):x\in U_n, R^{\o}(U_n)\leq\alpha_n\}\big)&\leq&  \frac{n^2}{k_n}\mu_{\eps}(U_n)C\overline{g_\eps} \l(U_n)\frac{\eta}{\eta-1}\eta^{\alpha_n}\nonumber \\ \label{eq:final-estimate}
&\leq& C^* \frac{\eta^{\alpha_n}}{k_n}\xrightarrow[n\to\infty]{}0\quad \mbox{by \eqref{eq:alpha-n}}.
\end{eqnarray}
\end{proof}

\begin{proof}[Proof of Corollary~\ref{cor:random-EVL=>Poisson}]
The only extra step we need to do is to check that $D_3(u_n)$ also holds. To do that we just have to change slightly the definition of $\psi$ that we used to prove $D_2(u_n)$ by using \eqref{RDC:L12}. Let $A\in\mathcal R$.
We set:
$$
\psi(x)=\displaystyle\int \I_{\bigcap_{i\in A\cap\N}\{f^{i}_{\tilde\o}(x)\leq u_n\}}\, \dif\theta_\varepsilon^{\N} (\tilde\o).
$$
The rest of the proof follows exactly as in the proof of $D_2(u_n)$ in the proof of Theorem~\ref{thm:random-EVL}.
\end{proof}

\subsection{Laws of rare events for specific randomly perturbed systems}

\subsubsection{Expanding and piecewise expanding maps on the circle with a finite number of discontinuities}
 We give a general definition from \cite{V97} of piecewise expanding maps on the circle which also includes the particular case of the continuous expanding maps:
\begin{itemize}
\item[(1)] there exist $\ell\in\N_0$ and $0=a_0<a_1<\cdots<a_{\ell}=1=0=a_0$ for which the restriction of $f$ to each ${\Xi_i}=(a_{i-1},a_i)$ is of class $C^1$, with $|Df(x)|>0$ for all $x\in\Xi_i$ and $i=1,\ldots,\ell$. In addition, for all $i=1,\ldots,\ell$, $g_{\Xi_{i}}=1/|Df|_{\Xi_i}|$ has bounded variation for $i=1,\ldots,\ell$.
\end{itemize}
We assume that $(f|_{\Xi_i})$ and $g_{\Xi_i}$ admit continuous extensions to $\Xi_i=[a_{i-1},a_i]$, for each $i=1,\ldots,\ell$. Since modifying the values of a map over a finite set of points does not change its statistical properties, we may assume that $f$ is either left-continuous or right-continuous (or both) at $a_i$, for each $i=1,\ldots,\ell$ (possibly for all $i$'s at the same time). Then let $\mathcal{P}^{(1)}$ be some partition of ${\mathcal S}^1$ into intervals $\Xi$ such that $\Xi\subset\Xi_i$ for some $i$ and $(f|\Xi)$ is continuous. Furthermore, for $n\geq 1$, let $\mathcal{P}^{(n)}$ be the partition of ${\mathcal S}^1$ such that $\mathcal{P}^{(n)}(x)=\mathcal{P}^{(n)}(y)$ if and only if $\mathcal{P}^{(1)}(f^j(x))=\mathcal{P}^{(1)}(f^j(y))$ for all $0\leq j< n$. Given $\Xi\in\mathcal{P}^{(n)}$, denote $g_{\Xi}^{(n)}=1/|Df^n|_{\Xi}|$;
\begin{itemize}
\item[(2)] there exist constants $C_1>0,\lambda_1<1$ such that $\sup {g^{(n)}_{\Xi}}\leq C_1\lambda^{n}_1$ for all $\Xi\in \mathcal{P}^{(n)}$ and all $n\geq 1$;
\item[(3)] for every subinterval $J$ of ${\mathcal S}^1$, there exists some $n\geq 1$ such that $f^n(J)={\mathcal S}^1$.
\end{itemize}

According to \cite[Proposition~3.15]{V97}, one has exponential decay of correlations for randomly perturbed systems derived from maps satisfying conditions $(1)-(3)$ above, taking $\mathcal{C}$ as the space of functions with bounded variation ($BV$), \ie  given $\varphi$ in $BV$
and $\psi\in L^1(\l)$,
\begin{equation}
\label{RDC:L13} \left|\int (\mathcal{U}_\eps \psi)\varphi\, \dif\l- \int\psi\, \dif\sm\int \varphi\, \dif\l \right|
 \leq C \lambda^n \|\varphi\|_{BV}\|\psi\|_{L^1(\l)},
\end{equation}
where $0<\lambda<1$ and $C>0$ is a constant independent of both $\varphi, \psi$.

Hence, in the particular case of $f$ being a continuous expanding map of the circle, (\ref{RDC:L13}), Theorem~\ref{thm:random-EVL}, Corollaries~\ref{cor:random-EVL=>HTS}  and \ref{cor:random-EVL=>Poisson}   allow us to obtain
\begin{corollary}
\label{for:random-EVL-expanding-circle}
Let $f:{\mathcal S}^1\to {\mathcal S}^1$ be a continuous expanding map satisfying $(1)-(3)$ above, which is randomly perturbed as in \eqref{eq:add-perturbation} with noise distribution given by \eqref{eq:noise-distribution}. For any point $\zeta\in\mathcal M$, consider that $X_0, X_1,\ldots$ is defined as in \eqref{eq:def-rand-stat-stoch-proc-RDS2} and let $u_n$ be such that \eqref{eq:un} holds. Then the stochastic process $X_0, X_1,\ldots$ satisfies $D_2(u_n)$, $D_3(u_n)$ and $D'(u_n)$, which implies that we have an EVL for $M_n$ such that $\bar H(\tau)=\e^{-\tau}$ and we have exponential HTS/RTS for balls around $\zeta$. Moreover, the REPP $N_n$ defined
  in \eqref{eq:def-REPP} is such that
  $N_n\xrightarrow[]{d}N$, as $n\rightarrow\infty$, where $N$
  denotes a Poisson Process with intensity $1$.
\end{corollary}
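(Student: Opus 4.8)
The statement is a direct corollary of Theorem~\ref{thm:random-EVL} together with Corollaries~\ref{cor:random-EVL=>HTS} and \ref{cor:random-EVL=>Poisson}, so the plan is simply to verify that a continuous expanding map $f:\mathcal S^1\to\mathcal S^1$ satisfying $(1)$--$(3)$, perturbed as in \eqref{eq:add-perturbation}--\eqref{eq:noise-distribution}, fulfils every hypothesis of Theorem~\ref{thm:random-EVL} with the Banach space $\mathcal C=BV$. Four points need checking. (i) The global Lipschitz bound $\dist(f(x),f(y))\le\eta\,\dist(x,y)$: since $f$ is continuous on $\mathcal S^1$ and of class $C^1$ on each $\overline{\Xi_i}$ with $|Df|$ admitting a continuous, hence bounded, extension, $f$ is Lipschitz for the arc-length metric with, say, $\eta=\max_i\sup_{\Xi_i}|Df|$. (ii) $\I_{U_n}\in BV$ with a uniform bound: condition \ref{item:U-ball} makes each $U_n$ an arc of $\mathcal S^1$, whence $\|\I_{U_n}\|_{BV}=\sup|\I_{U_n}|+{\rm Var}(\I_{U_n})\le 3$, which provides the constant $C'$. (As usual one takes the observable $\varphi$ to be a function of $\dist(\cdot,\zeta)$ maximised at $\zeta$, so that \ref{item:U-ball} holds with level sets being balls, using that $\mu_\varepsilon$ is absolutely continuous.)

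(iii) The stationary density: \cite[Proposition~3.15]{V97} gives a unique stationary measure $\mu_\varepsilon=h_\varepsilon\l$ with $h_\varepsilon\in BV$, so $h_\varepsilon\le\overline h_\varepsilon<\infty$ automatically; for the positive lower bound I would exploit the convolution structure $\mathcal P_\varepsilon\psi=(\mathcal P_f\psi)*g_\varepsilon$ (with $\mathcal P_f$ the transfer operator of the unperturbed $f$) together with the topological mixing in $(3)$: writing $h_\varepsilon=\mathcal P_\varepsilon^k h_\varepsilon$, each application of $\mathcal P_\varepsilon$ convolves a non-negative function of unit mass with a density $\ge\underline{g_\varepsilon}>0$ on $B_\varepsilon(0)$, so after finitely many iterates the function, hence $h_\varepsilon$, is bounded below by a positive constant on all of $\mathcal S^1$. (iv) The decay of correlations in the form \eqref{RDC:L1}: \eqref{RDC:L13} gives exponential decay of $\big|\int(\mathcal U_\varepsilon^n\psi)\varphi\,\dif\l-\int\psi\,\dif\mu_\varepsilon\int\varphi\,\dif\l\big|$ for $\varphi\in BV$ and $\psi\in L^1(\l)$; applying it with $\varphi$ replaced by $\phi h_\varepsilon$ --- legitimate since $\phi h_\varepsilon\in BV$ with $\|\phi h_\varepsilon\|_{BV}\le\|\phi\|_{BV}\|h_\varepsilon\|_{BV}$, and $\int\phi h_\varepsilon\,\dif\l=\int\phi\,\dif\mu_\varepsilon$, $\int(\mathcal U_\varepsilon^n\psi)\phi h_\varepsilon\,\dif\l=\int(\mathcal U_\varepsilon^n\psi)\phi\,\dif\mu_\varepsilon$ --- and then bounding $\|\psi\|_{L^1(\l)}\le\underline h_\varepsilon^{-1}\|\psi\|_{L^1(\mu_\varepsilon)}$ recasts the estimate as $\cv_{\mu_\varepsilon\times\theta_\varepsilon^\N}(\phi,\psi,n)\le\tilde C\lambda^n$; since $\tilde C\lambda^n\le Cn^{-2}$ for a suitable $C$, \eqref{RDC:L1} holds with $\mathcal C=BV$.

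With (i)--(iv) in place, Theorem~\ref{thm:random-EVL} yields $D_2(u_n)$, $D'(u_n)$ and the EVL $\bar H(\tau)=\e^{-\tau}$; Corollary~\ref{cor:random-EVL=>HTS} then gives exponential HTS/RTS for balls around $\zeta$; and Corollary~\ref{cor:random-EVL=>Poisson} gives $D_3(u_n)$ together with $N_n\xrightarrow[]{d}N$, $N$ a Poisson process of intensity $1$. I expect the only genuinely non-routine point to be the uniform positive lower bound on $h_\varepsilon$ in (iii): it is standard in the theory of random perturbations of expanding maps but does require combining the expansion and mixing of $f$ with the absolute continuity of the noise. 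This is also, along with the diameter estimate $|f_\o^j(U_n)|\le\eta^j|U_n|$ invoked inside the proof of Theorem~\ref{thm:random-EVL}, where continuity of $f$ is essential --- which is why the full piecewise expanding family, whose elements may carry the orbit of $\zeta$ onto a discontinuity, is not covered by this argument.
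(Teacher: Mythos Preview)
Your proof is correct and follows exactly the route the paper takes: the corollary is obtained by invoking \eqref{RDC:L13} together with Theorem~\ref{thm:random-EVL}, Corollary~\ref{cor:random-EVL=>HTS} and Corollary~\ref{cor:random-EVL=>Poisson}. You have supplied considerably more detail than the paper does---the paper records only the one-line observation preceding the corollary---and your checks (i)--(iv), including the flagging of the lower bound on $h_\varepsilon$ as the one non-trivial point, are well taken.
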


In the proof of Theorem~\ref{thm:random-EVL}, we used the continuity of the map, in particular, in \eqref{eq:fact1}. However, we can adapt the argument in order to allow a finite number of discontinuities for expanding maps of the circle.

\begin{proposition}
Let  $f:{\mathcal S}^1\to {\mathcal S}^1$ be a map satisfying conditions $(1)-(3)$ above, which is randomly perturbed as in \eqref{eq:add-perturbation} with noise distribution given by \eqref{eq:noise-distribution}. For any point $\zeta\in\mathcal M$, consider that $X_0, X_1,\ldots$ is defined as in \eqref{eq:def-rand-stat-stoch-proc-RDS2} and let $u_n$ be such that \eqref{eq:un} holds. Then the stochastic process $X_0, X_1,\ldots$ satisfies $D_2(u_n)$, $D_3(u_n)$ and $D'(u_n)$, which implies that we have an EVL for $M_n$ such that $\bar H(\tau)=\e^{-\tau}$ and we have exponential HTS/RTS for balls around $\zeta$. Moreover, the REPP $N_n$ defined
  in \eqref{eq:def-REPP} is such that
  $N_n\xrightarrow[]{d}N$, as $n\rightarrow\infty$, where $N$
  denotes a Poisson Process with intensity $1$.
\end{proposition}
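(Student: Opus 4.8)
\emph{Proof strategy.} The plan is to reproduce the proof of Theorem~\ref{thm:random-EVL} essentially verbatim, the only new point being a replacement for~\eqref{eq:fact1}, where the continuity of $f$ was used; here $f$ has $\ell$ discontinuity points, so a small ball may get chopped when it is iterated, but only into controllably many pieces.

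\emph{The conditions $D_2(u_n)$ and $D_3(u_n)$.} First I would note that these follow exactly as in the proofs of Theorem~\ref{thm:random-EVL} and Corollary~\ref{cor:random-EVL=>Poisson}: those arguments only use the annealed decay of correlations, the particular choice of the test functions $\phi=\I_{U_n}$ and $\psi$, and the uniform bound on the norm of $\I_{U_n}$. Since $U_n$ is an interval, $\I_{U_n}\in BV$ with $\|\I_{U_n}\|_{BV}\le 3$; and~\eqref{RDC:L13}, once rewritten in the $\mu_\varepsilon$-normalised form~\eqref{RDC:L12} (up to a multiplicative constant, using that the invariant density $h_\varepsilon$ lies in $BV$ and is bounded away from $0$ and $\infty$) with $t^{-2}$ replaced by the exponential rate $\gamma^{t}$, gives decay of correlations at a summable rate. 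Hence $D_2(u_n)$ and $D_3(u_n)$ hold with $\gamma(n,t)=C^*\gamma^{t}$ for some $C^*>0$ and any $t_n=o(n)$ with $t_n\to\infty$.

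\emph{The condition $D'(u_n)$.} Here I would keep the whole skeleton of the proof of Theorem~\ref{thm:random-EVL}: split $\Omega$ according to $\{\omega:R^{\o}(U_n)>\alpha_n\}$, decompose $n\sum_{j}\p(U_n\cap f_{\o}^{-j}(U_n))$ as $I+I\!I$, and bound $I$ verbatim by decay of correlations against $L^1$ (taking $\psi=\phi=\I_{U_n}$ in~\eqref{RDC:L13}, which as in~\eqref{eq:estimate2} gives $I\le \tau^{2}/k_n+C^*\tau\,\gamma^{\alpha_n}/(1-\gamma)\to 0$). For $I\!I$ one needs $\theta_\eps^{\N}(\{\omega:R^{\o}(U_n)\le\alpha_n\})$ small, and this is where~\eqref{eq:fact1} must change. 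Put $\eta:=\max_i\sup_{x\in\Xi_i}|Df(x)|$ (which is finite; see below) and let $\rho>0$ be the least distance between distinct discontinuity points of $f$. For $n$ large, $\eta^{\alpha_n}\l(U_n)<\rho$, and I would then prove by induction on $0\le j\le\alpha_n$ that $f_{\o}^{j}(U_n)$ is a union of at most $2^{j}$ intervals, each of length $<\rho$ and of total length $\le\eta^{j}\l(U_n)$: an interval of length $<\rho$ meets at most one discontinuity of $f$, so $f$ maps it onto at most two intervals, each dilated by a factor at most $\eta$, and translation by $\omega_{j}$ changes nothing. Consequently, for fixed $\omega_1,\dots,\omega_{j-1}$, the event $f_{\o}^{j}(U_n)\cap U_n\neq\emptyset$ forces $\omega_{j}\in U_n-f(f_{\o}^{j-1}(U_n))$, a set of Lebesgue measure at most $C\beta^{j}\l(U_n)$, where $\beta:=\max\{\eta,2\}$; summing the geometric series yields $\theta_\eps^{\N}(\{\omega:R^{\o}(U_n)\le\alpha_n\})\le \overline{g_\eps}\sum_{j=1}^{\alpha_n}C\beta^{j}\l(U_n)\le C'\overline{g_\eps}\,\beta^{\alpha_n}\l(U_n)$. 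This is precisely the bound in the proof of Theorem~\ref{thm:random-EVL} with $\eta$ replaced by $\beta$, so feeding it into the estimate for $I\!I$ there gives $I\!I\le C^*\beta^{\alpha_n}/k_n$. It then suffices to choose $\alpha_n=\lfloor\delta\log k_n\rfloor$ with $\delta>0$ small enough that $\beta^{\alpha_n}=o(k_n)$ (which also makes $\alpha_n\to\infty$, $\alpha_n=o(n/k_n)$ and $\eta^{\alpha_n}\l(U_n)<\rho$ eventually); this is the analogue of~\eqref{eq:alpha-n}. Then $I\!I\to 0$, so $D'(u_n)$ holds. Finally, $D_2(u_n)$ and $D'(u_n)$ give the EVL with $\bar H(\tau)=\e^{-\tau}$ via~\cite[Theorem~1]{FF08a}, hence exponential HTS/RTS for balls around $\zeta$ via the EVL--HTS/RTS correspondence recalled in Section~\ref{sec:statement-results} (as in Corollary~\ref{cor:random-EVL=>HTS}), and $D_3(u_n)$ together with $D'(u_n)$ give $N_n\xrightarrow[]{d}N$ for the standard Poisson process $N$ of intensity $1$ via~\cite[Theorem~5]{FFT10} (as in Corollary~\ref{cor:random-EVL=>Poisson}).

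The main obstacle is precisely the substitute for~\eqref{eq:fact1}: one must keep simultaneous track of how many connected components $f_{\o}^{j}(U_n)$ acquires under repeated chopping by the finitely many discontinuities and of how its total length grows, and then tune $\alpha_n$ to grow slowly enough that $\beta^{\alpha_n}=o(k_n)$ and the iterated pieces stay shorter than $\rho$, yet fast enough ($\alpha_n\to\infty$) to kill the tail $\sum_{j\ge\alpha_n}\gamma^{j}$ in $I$. A subsidiary point is the finiteness of the branch-wise Lipschitz constant $\eta$: this holds whenever $|Df|$ is bounded (as in the standard examples), and in the generality where $|Df|$ may blow up at branch endpoints one has to exploit that the random translations prevent the pieces from repeatedly meeting those points.
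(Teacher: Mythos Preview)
Your argument is correct. For $D_2(u_n)$ and $D_3(u_n)$ you match the paper exactly. For $D'(u_n)$ you take a genuinely different route from the paper's proof of this particular proposition, though it is essentially the one--dimensional specialisation of the argument the paper itself uses later for multidimensional piecewise expanding maps (Proposition~\ref{prop:random-multi-dimensional-EVL}).

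The paper's route here is to avoid chopping altogether: it puts ``safety regions'' $B_{2\eta^j|U_n|}(\xi_i)$ around every discontinuity $\xi_1,\dots,\xi_\ell$ (and around $\zeta=\xi_0$), declares $\o$ bad if the random orbit of the single point $\zeta$ enters any of them at some time $j\le\alpha_n$, and observes that on the good set $f_{\o}^j(U_n)$ stays a single interval and never meets $U_n$. This yields $\theta_\eps^{\N}(\{\o:R^{\o}(U_n)\le\alpha_n\})\le 4(\ell+1)\overline{g_\eps}\,|U_n|\,\frac{\eta}{\eta-1}\eta^{\alpha_n}$, after which the rest is verbatim from the proof of Theorem~\ref{thm:random-EVL}. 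Your route instead lets the iterates split, controls the number of pieces by $2^j$ via the gap $\rho$, and bounds the return probability through the Minkowski difference $U_n-f(f_{\o}^{j-1}(U_n))$. The paper's argument is slightly leaner (only tracks one point, and the exponential base is $\eta$ rather than $\max\{2,\eta\}$); yours is a bit more robust, in particular it needs no special bookkeeping when $\zeta$ itself is one of the $\xi_i$, and it does not rely on choosing representative points inside the pieces. Either way the final $\alpha_n$--$k_n$ trade-off and the conclusion are the same.

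Your caveat about the finiteness of $\eta=\max_i\sup_{x\in\Xi_i}|Df(x)|$ is well taken: conditions (1)--(3) only give that $1/|Df|$ has bounded variation on each branch, which does not by itself bound $|Df|$. The paper's proof uses $\eta$ in the same way without comment, so this is a shared tacit hypothesis rather than a defect peculiar to your argument.
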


\begin{proof}

The proof of $D_2(u_n)$ follows from (\ref{RDC:L13}) as in the continuous case. Regarding the proof of $D'(u_n)$, in order to use the same arguments as in the continuous case, we want to avoid coming close to the discontinuity points along the random orbit of $\zeta$ (up to time $\alpha_n$). Since there are finitely many discontinuity points, say $\xi_i$'s for $i=1,\ldots,\ell$, we can control this by asking for some "safety regions" around each of them. By doing so, we ensure that the random orbit of $\zeta$ is sufficiently far away from $\xi_i$'s so that the iterates of $U_n$ consist of only one connected component. We can formulate these ``safety regions'' as
\begin{equation}
\label{eq:safetybox}
\dist(f_{\o}^j(\zeta),\xi_i)>2\eta^j|U_n| \textrm{ for all } i=1,\dots,\ell.
\end{equation}

Now, we make an estimate on the $\o$'s that take the orbit of $\zeta$ too close to the discontinuity points as well as close to $\zeta$ itself and our aim is to show that the $\theta_\eps^\N$ measure of this set is small. Let us set $\xi_0=\zeta$ to simplify the notation. Then,
$\big\{\o:\; R^{\o}(U_n)\leq\alpha_n\big\}\subset \bigcup_{j=1}^{\alpha_n} \bigcup_{i=0}^{\ell} \big\{\o:\; f_{\o}^j(\zeta)\in B_{2\eta^j|U_n|}(\xi_i)\big\}.$
Thus, we have
\begin{align*}
\theta_\eps^\N\big(\big\{\o:\; &R^{\o}(U_n)\leq\alpha_n\big\}\big)\leq \sum_{i=0}^{\ell} \sum_{j=1}^{\alpha_n}\int \theta_\eps\Big(\Big\{\omega_j: f\left(f_{\o}^{j-1}(\zeta)\right)+\omega_j\in  B_{2\eta^j|U_n|}(\xi_i) \Big\}\Big)\dif\theta^\N_\eps\\
&\leq \sum_{i=0}^{\ell} \sum_{j=1}^{\alpha_n} \overline{g_\eps}\left|B_{2\eta^j|U_n|}(\xi_i)\right|=\sum_{i=0}^{\ell} \sum_{j=1}^{\alpha_n} \overline{g_\eps} 4\eta^j|U_n|
\leq 4(\ell+1)\overline{g_\eps} |U_n|\frac{\eta}{\eta-1}\eta^{\alpha_n}.
\end{align*}


The proof now follows the same lines as the proof of Theorem~\ref{thm:random-EVL} and Corollary~\ref{cor:random-EVL=>Poisson}.
\end{proof}

\subsubsection{Expanding and piecewise expanding maps in higher dimensions}
Let us now consider the multidimensional piecewise expanding systems defined in Section~\ref{subset:example-Saussol} but only with a finite number, $K$, of domains of local injectivity;  moreover let us restrict ourselves to a mixing component which, for simplicity, we will take as the whole space $Z$; we will take $\mu$ as the unique absolutely continuous invariant measure with density $h$. In addition, we ask each $\partial Z_i$ to be included in piecewise $C^1$ codimension-$1$ embedded compact submanifolds and for $Z(f)=\sup\limits_{x}\sum_{i=1}^K \#\{\textrm{ smooth pieces intersecting } \partial Z_i \textrm{ containing }x\}$
\begin{equation}\label{sc}
s^\alpha+\frac{4s}{1-s}Z(f)\frac{\gamma_{N-1}}{\gamma_N}<1,
\end{equation}
where $\gamma_N$ is the N-volume of the N-dimensional unit ball of $\R^N$. Then, we know that by Lemma 2.1.\ in \cite{S00}, item (4) in Definition~\ref{def:MPE} is satisfied \footnote{The inequality \eqref{sc} ensures that for the unperturbed map the quantity $\eta(\eps_1)<1$; see the definition of this quantity after the formula \eqref{GG}. The value of $\eta(\eps_1)$ is one of the constants in the Lasota-Yorke inequality, see item (i) below, and we will require that it will be independent of the noise. This will be the case for the additive noise since the determinant of the perturbed maps will not change and this is what is used in \eqref{GG} to control the Lebesgue measure of $f_i^{-1}B_{\eps}(\partial f Z_i)$. The other factor in the Lasota-Yorke inequality is also given in terms of the quantity \eqref{GG}. }. Notice that formula \eqref{sc} gives exponential decay of correlations for the adapted pair:  $L^1$ functions against functions in the quasi-H\"older space ~$V_{\alpha}$.\\
We perturb again this kind of maps with  additive noise by asking that the image of $Z$ is strictly included in $Z$. We will also require that the density $h$ is bounded from below by the positive constant $h_m$. We will now prove the exponential decay of correlations for the random evolution operator ${\mathcal U}_{\eps}$, by using the perturbation theory in \cite{KL09}, which we will also quote and use later on in Section \ref{subsec:lim-dist}. This theory ensures us that the perturbed Perron-Frobenius  operator ${\mathcal P}_{\epsilon}$ is mixing on the adapted pair $(L^1, V_{\alpha})$ whenever we have:\\
(i) a uniform Lasota-Yorke inequality for ${\mathcal P}_{\epsilon}$, \ie all the constants in that inequality are independent of the noise $\eps$,\\
(ii) the closeness property (see also hypothesis {\bf H4} in Section \ref{subsec:lim-dist} below): there exists a monotone upper semi-continuous function $p:\Omega\rightarrow [0,\infty)$ such that $\lim_{\eps\rightarrow 0}p_{\eps}=0$ and $\forall \varphi \in V_{\alpha}$ , $\forall \eps\in \Omega: ||{\mathcal P}\varphi-{\mathcal P}_{\eps}\varphi||_1\le p_{\eps}||\varphi ||_{\alpha}$.

Condition (i) follows easily by observing that  the derivative of the original and of the perturbed maps are the same, and this does not change the contraction factor $s$, and the multiplicity of the boundaries' intersection, $Z(f)$,  is invariant too. Finally we invoke the observation written in the preceding footnote.  Therefore the Perron-Frobenius operators $\mathcal {P}_{\omega}$ associated to the perturbed maps $f_{\omega}$ verify the same Lasota-Yorke inequality  and therefore the same is true for $\mathcal{P}_{\eps}$. \\ Our next step is to prove condition (ii), in particular we have
\begin{proposition} 
\label{prop:DCL1-PE}
There exists a constant $C$ such that for any $\varphi\in V_{\alpha}$ we have $$\Vert\mathcal{P}\varphi-\mathcal{P}_{\eps}\varphi\Vert_1\leq C\eps^{\alpha}||\varphi||_{\alpha}.$$
\end{proposition}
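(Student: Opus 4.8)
The plan is to exploit the very rigid structure of additive noise. Since $f_\omega=f+\omega$, the Perron--Frobenius operator $\mathcal P_\omega$ of $f_\omega$ is simply $\mathcal P$ followed by the translation by $\omega$. First I would check this directly from the duality definition: for $\phi\in L^\infty(\l)$ and $\psi\in L^1(\l)$,
\[
\int \mathcal P_\omega\psi\cdot\phi\,\dif\l=\int\psi\cdot(\phi\circ f_\omega)\,\dif\l=\int\psi\cdot\big(\phi(\,\cdot+\omega)\circ f\big)\,\dif\l=\int\mathcal P\psi\cdot\phi(\,\cdot+\omega)\,\dif\l,
\]
and a change of variables on the last integral (using translation invariance of $\l$) yields $(\mathcal P_\omega\psi)(x)=(\mathcal P\psi)(x-\omega)$ for $\l$-a.e.\ $x$. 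Here one uses that the image of $Z$ is strictly included in $Z$, so that for $\eps$ small all translates $f(Z)+\omega$ with $\omega\in B_\eps(0)$ still lie inside $Z$; hence no mass leaves $Z$ and this is a genuine identity of densities on $Z$. Integrating over $\omega$ against $\theta_\eps$ and recalling $(\mathcal P_\eps\psi)(x)=\int_{B_\eps(0)}(\mathcal P_\omega\psi)(x)\,\dif\theta_\eps(\omega)$, we obtain
\[
\mathcal P\varphi-\mathcal P_\eps\varphi=\int_{B_\eps(0)}\big(\mathcal P\varphi-(\mathcal P\varphi)(\,\cdot-\omega)\big)\,\dif\theta_\eps(\omega),
\]
and therefore, since $\theta_\eps$ is a probability measure,
\[
\|\mathcal P\varphi-\mathcal P_\eps\varphi\|_1\le\sup_{\omega\in B_\eps(0)}\big\|\mathcal P\varphi-(\mathcal P\varphi)(\,\cdot-\omega)\big\|_1 .
\]

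The next step is to estimate the $L^1$ modulus of continuity of translations on the quasi-H\"older space $V_\alpha$. The elementary point is that for $g\in V_\alpha$ and $|\omega|\le\eps_0$ one has $|g(x)-g(x-\omega)|\le\mathrm{osc}(g,B_{|\omega|}(x))$ for $\l$-a.e.\ $x$, since both $x$ and $x-\omega$ lie in the closed ball of radius $|\omega|$ about $x$; the essential supremum/infimum version of this statement is of exactly the kind handled in \cite[Section~3]{S00}. Integrating in $x$ and using the definition of $|\cdot|_\alpha$ at scale $\eps=|\omega|$,
\[
\big\|g-g(\,\cdot-\omega)\big\|_1\le\int_{\R^N}\mathrm{osc}(g,B_{|\omega|}(x))\,\dif\l(x)\le|\omega|^\alpha\,|g|_\alpha .
\]
Applying this with $g=\mathcal P\varphi$ gives $\|\mathcal P\varphi-\mathcal P_\eps\varphi\|_1\le\eps^\alpha\,|\mathcal P\varphi|_\alpha$. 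To replace $|\mathcal P\varphi|_\alpha$ by a multiple of $\|\varphi\|_\alpha$, I would invoke the Lasota--Yorke inequality for the unperturbed operator $\mathcal P$ that is already part of Saussol's theory: there are $\lambda\in(0,1)$ and $B>0$ with $\|\mathcal P\varphi\|_\alpha\le\lambda\|\varphi\|_\alpha+B\|\varphi\|_{L^1(\l)}\le(\lambda+B)\|\varphi\|_\alpha$, hence $|\mathcal P\varphi|_\alpha\le(\lambda+B)\|\varphi\|_\alpha$. Combining the displays gives $\|\mathcal P\varphi-\mathcal P_\eps\varphi\|_1\le(\lambda+B)\,\eps^\alpha\|\varphi\|_\alpha$, which is the assertion with $C=\lambda+B$.

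The duality computation and the Lasota--Yorke bound are routine; I expect the only delicate point to be the measure-theoretic bookkeeping. One has to make the pointwise inequality $|g(x)-g(x-\omega)|\le\mathrm{osc}(g,B_{|\omega|}(x))$ precise in the essential sup/inf sense and uniform over all $|\omega|\le\eps_0$, and one has to be careful that the strict inclusion of $f(Z)$ in $Z$ really prevents any loss of mass under translation, so that $(\mathcal P_\omega\psi)(x)=(\mathcal P\psi)(x-\omega)$ holds on $Z$ and not merely after extending densities by zero to $\R^N$. These are minor issues, but they are the places where genuine care is needed.
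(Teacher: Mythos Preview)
Your argument is correct and genuinely different from the paper's. The paper does not use the translation identity $(\mathcal P_\omega\psi)(x)=(\mathcal P\psi)(x-\omega)$ at all; instead it expands $\mathcal P_\omega\varphi-\mathcal P\varphi$ branch by branch as
\[
\sum_i\Big(\varphi(f_i^{-1}x)G(f_i^{-1}x)\I_{fZ_i}(x)-\varphi(f_{\omega,i}^{-1}x)G(f_{\omega,i}^{-1}x)\I_{f_\omega Z_i}(x)\Big),
\]
and then splits into three contributions: the oscillation of $\varphi$ on the overlap $fZ_i\cap f_\omega Z_i$ (bounded via the seminorm exactly as you do), a boundary term on the symmetric difference $fZ_i\,\Delta\, f_\omega Z_i$ (bounded using $\|\varphi\|_\infty\le c_v\|\varphi\|_\alpha$ and requiring the lower bound $h_m$ on the invariant density to control $\|\mathcal P 1\|_\infty$), and the variation of the Jacobian $G$ (bounded via the H\"older condition~(2) of Definition~\ref{def:MPE}). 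Your route is shorter, uses neither the explicit branch structure nor the lower bound on $h$, and isolates the only analytic ingredient as the $L^1$ translation estimate $\|g-g(\cdot-\omega)\|_1\le \eps^\alpha|g|_\alpha$ for $g\in V_\alpha$, valid for $|\omega|<\eps\le\eps_0$. What the paper's decomposition buys is robustness: it would go through for perturbations $f_\omega$ that are merely $C^1$-close to $f$ rather than exact translations, whereas your argument relies essentially on the additive form $f_\omega=f+\omega$ and the strict inclusion $f(Z)\Subset Z$ to make the translation identity honest on $Z$. The two caveats you flag (the essential-sup/inf version of the pointwise oscillation bound, and the support bookkeeping for $\mathcal P\varphi$) are indeed the only places requiring care, and both are routine.
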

\begin{proof}
We have
\begin{align*}
\Vert \mathcal{P}\varphi-\mathcal{P}_{\eps}\varphi\Vert_1 \leq \int_Z\int_{\Omega}\vert\mathcal{P}_{\omega}\varphi(x)-\mathcal{P}\varphi(x)\vert \dif\theta_{\eps}(\omega)\dif x.
\end{align*}
Putting $G(x)=\frac{1}{|\det Df(x)|}$, we can write
\begin{align}
\begin{split}
\vert\mathcal{P}_{\omega}\varphi(x)-\mathcal{P}\varphi(x)\vert \leq \sum_{Z_i, i=1,\ldots, K}\vert \varphi(f_{i}^{-1}x) G(f_{i}^{-1}x)\I_{fZ_{i}}(x)-\varphi(f_{\omega,i}^{-1}x) G(f_{i}^{-1}x)\I_{f_{\omega}Z_{i}}(x)\vert\label{IN}\\
+\sum_{Z_i, i=1,\ldots, K}\vert \varphi(f_{\omega,i}^{-1}x)\vert \vert G(f_{i}^{-1}x)-G(f_{\omega,i}^{-1}x)\vert \I_{f_{\omega}Z_{i}}(x)\\
:=I+I\!I
\end{split}
\end{align}
where $f_{\omega}(x)=f(x)+\omega$ and $\omega$ is a vector in $\mathbb{R}^N$ with each component being less than $\eps$ in modulus. Moreover $f_{\omega,i}^{-1}$ denotes the inverse of the restriction of $f_{\omega}$ to $Z_i$, which is denoted by $f_{\omega,i}$ itself. We now bound the first sum, $I$, in \eqref{IN}  by considering two cases:\\
(i) Let us suppose first that $x\in fZ_i\cap f_{\omega,i}Z_i$. Then since both $f$ and $f_{\omega,i}$ are injective, there will be two points, $y_i$ and $y_{\omega,i}$ in $Z_i$ such that $x=f(y_i)=f_{\omega,i}(y_{\omega,i})=f(y_{\omega,i})+\omega$. This immediately implies that $\dist(y_i,y_{\omega,i})\le s \sqrt{N} \eps$, if $\dist$ is the Euclidean distance. For such an $x$ we continue to bound the first summand, I, in \eqref{IN} as:
$$
I\leq \sum_{Z_i, i=1,\ldots,K} G(f_{i}^{-1}x)\mbox{osc}(\varphi, B_{s \sqrt{N} \eps}(f_{i}^{-1}(x)))\I_{fZ_{i}}(x)
$$
By integrating over $Z$ we get
\begin{eqnarray*}
\int_Z\Big( \sum_{Z_i, i=1,\ldots,K} G(f_{i}^{-1}x)\mbox{osc}(\varphi, B_{s \sqrt{N} \eps}(f_{i}^{-1}(x)))\I_{fZ_{i}}(x)\Big) \dif x=\int_Z  \mathcal{P}(\mbox{osc}(\varphi, B_{s \sqrt{N} \eps}(x)))\dif x\\=\int_Z  \mbox{osc}(\varphi, B_{s \sqrt{N} \eps}(x))\dif x\leq (s \sqrt{N} \eps)^{\alpha} |\varphi|_{\alpha}.
\end{eqnarray*}

(ii) We now consider the case when $x\in fZ_i\Delta f_{\omega,i}Z_i$; the Lebesgue measure of this last set is bounded by $\eps$ times the codimension-1 volume of $\partial f Z_i$: let $r$ denote the maximum of those volumes for $i=1,\cdots,k$. Thus we get
\begin{equation}\label{FG}
\int_Z \vert\mathcal{P}_{\omega}\varphi(x)-\mathcal{P}\varphi(x)\vert\dif x\leq r \eps \Vert \varphi\Vert_{\infty}\Vert\mathcal{P}1\Vert_{\infty}.
\end{equation} We notice that the inclusion $V_{\alpha}\hookrightarrow L^{\infty}_m$ is bounded, namely there exists $c_v$ such that $\Vert \varphi\Vert_{\infty}\le c_v \Vert \varphi\Vert_{\alpha}.$ We therefore continue (\ref{FG}) as
$$
(\ref{FG})\leq r \eps c_v \Vert \varphi\Vert_{\alpha}\Vert \mathcal{P}\frac{h}{h}\Vert_{\infty}\le  r \eps c_v \Vert \varphi\Vert_{\alpha}\frac{\Vert h\Vert_{\infty}}{h_m}
$$
We now come to the second summand, $II$, in \eqref{IN}; we begin by observing that
\begin{align*}
 \vert G(f_{i}^{-1}x)-G(f_{\omega,i}^{-1}x)\vert &= \Bigg\vert\frac{1}{|\det Df(f^{-1}_i x)|}-\frac{1}{|\det Df(f^{-1}_{\omega,i}x)\vert}\Bigg\vert\\
&= \Big\vert |\det  Df_i^{-1}(x)\vert -|\det Df_i^{-1}(z)\vert\Big\vert
\leq \vert \det  Df_i^{-1}(x) -\det  Df_i^{-1}(z)\vert
\end{align*}
where $z=f(y_{\omega,i})$ and dist$(x,z)\le \sqrt{N}\eps$. By using the H\"older assumption (2) in Definition~\ref{def:MPE}, we have
\begin{align*}
I\!I &\leq c (\sqrt{N}\eps)^{\alpha} \sum_{Z_i, i=1,\ldots, K}\vert \varphi(f_{\omega,i}^{-1}x)\vert  |\det Df_i^{-1}(z)|\I_{f_{\omega}Z_{i}}(x)\\
&\leq c (\sqrt{N}\eps)^{\alpha}\sum_{Z_i, i=1,\ldots, K}\vert \varphi(f_{\omega,i}^{-1}x)\vert  \frac{1}{|\det Df(f^{-1}_{\omega,i}(x))|}\I_{f_{\omega}Z_{i}}(x).
\end{align*}
 By integrating over $Z$ we get the contribution
 $$
 c (\sqrt{N}\eps)^{\alpha}\int_Z {\mathcal P}_{\omega}(|\varphi|)\dif x\le  c (\sqrt{N}\eps)^{\alpha}\int_Z |\varphi|\dif x\le c (\sqrt{N}\eps)^{\alpha}||\varphi||_{L^1(\l)}.
 $$

In conclusion we get $\Vert\mathcal{P}\varphi-\mathcal{P}_{\eps}\varphi\Vert_1\leq C\eps^{\alpha}\Vert \varphi\Vert_{\alpha}$, where the constant $C$ collects the various constants introduced above.
\end{proof}

As a consequence of Proposition~\ref{prop:DCL1-PE} we obtain exponential decay of correlations of quasi-H\"older functions (in $V_\alpha$), against $L^1$ functions, in particular, for uniformly expanding maps on the torus $\mathbb T^d$. Since, $\I_{U_n}\in V_\alpha$, $\|\I_{U_n}\|_\alpha$ is uniformly bounded by above, then it follows by Theorem~\ref{thm:random-EVL} and Corollary~\ref{cor:random-EVL=>HTS} that
\begin{corollary}
\label{for:random-EVL-expanding-torus}
Let $f:{\mathbb T}^d\to {\mathbb T}^d$ be a $C^2$ uniformly expanding map on $\mathbb T^d$, which is randomly perturbed as in \eqref{eq:add-perturbation} with noise distribution given by \eqref{eq:noise-distribution}. 
For any point $\zeta\in\mathcal M$, consider that $X_0, X_1,\ldots$ is defined as in \eqref{eq:def-rand-stat-stoch-proc-RDS2} and let $u_n$ be such that \eqref{eq:un} holds. Then the stochastic process $X_0, X_1,\ldots$ satisfies $D_2(u_n)$, $D_3(u_n)$ and $D'(u_n)$, which implies that we have an EVL for $M_n$ such that $\bar H(\tau)=\e^{-\tau}$ and we have exponential HTS/RTS for balls around $\zeta$. Moreover, the REPP $N_n$ defined
  in \eqref{eq:def-REPP} is such that
  $N_n\xrightarrow[]{d}N$, as $n\rightarrow\infty$, where $N$
  denotes a Poisson Process with intensity $1$.

\end{corollary}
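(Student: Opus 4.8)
The plan is to derive Corollary~\ref{for:random-EVL-expanding-torus} by checking, one at a time, the hypotheses of Theorem~\ref{thm:random-EVL} together with those of Corollaries~\ref{cor:random-EVL=>HTS} and \ref{cor:random-EVL=>Poisson}, and then invoking those abstract results. Concretely, for the randomly perturbed system $(\mathbb T^d\times\Omega,\mathcal B,\mu_\varepsilon\times\theta_\varepsilon^\N,S)$ there are four things to verify: (a) a uniform Lipschitz bound $\dist(f(x),f(y))\le\eta\,\dist(x,y)$; (b) absolute continuity of the stationary measure, $\mu_\varepsilon=h_\varepsilon\l$, with $0<\underline h_\varepsilon\le h_\varepsilon\le\overline h_\varepsilon<\infty$ uniformly for small $\varepsilon$; (c) annealed decay of correlations of observables in some Banach space $\mathcal C$ against $L^1(\mu_\varepsilon)$ at a rate bounded by $Cn^{-2}$, as in \eqref{RDC:L1}; and (d) $\I_{U_n}\in\mathcal C$ with $\|\I_{U_n}\|_{\mathcal C}\le C'$ independently of $n$.

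Items (a) and (d) are immediate. Since $f$ is $C^2$ on the compact torus it is globally Lipschitz, so we may take $\eta=\sup_x\|Df(x)\|$; this also serves each $f_\omega$, since $f_\omega=f+\omega$ has the same derivative. For (d) we take $\mathcal C=V_\alpha$, the quasi-H\"older space of Section~\ref{subset:example-Saussol}: by condition \ref{item:U-ball} the set $U_n$ is a topological ball, and exactly as in the proof of Proposition~\ref{prop:MPE} the indicator of a ball has $\alpha$-seminorm bounded by a constant independent of $n$, so $\|\I_{U_n}\|_\alpha\le C'$.

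The substance of the argument is (c), together with the uniform lower bound in (b), and this is where the Keller--Liverani perturbation theory \cite{KL09} enters, as set up in the paragraph preceding Proposition~\ref{prop:DCL1-PE}. First I would record that the unperturbed Perron--Frobenius operator satisfies a Lasota--Yorke inequality on $V_\alpha$ whose constants depend only on the contraction rate $s$ and the boundary multiplicity $Z(f)$; since additive perturbations leave the derivative of the map unchanged, $\det Df_\omega=\det Df$, the quantity controlling the Lebesgue measure of $f_i^{-1}B_\varepsilon(\partial fZ_i)$ in \eqref{GG} is unaffected, and one obtains the \emph{same} Lasota--Yorke inequality for every $\mathcal P_\omega$, hence for $\mathcal P_\varepsilon=\int\mathcal P_\omega\,\dif\theta_\varepsilon(\omega)$: this is hypothesis (i). Proposition~\ref{prop:DCL1-PE} supplies hypothesis (ii), the closeness estimate $\|\mathcal P\varphi-\mathcal P_\varepsilon\varphi\|_1\le C\varepsilon^\alpha\|\varphi\|_\alpha$. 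The Keller--Liverani theorem then gives, for all small $\varepsilon$, a spectral gap for $\mathcal P_\varepsilon$ on $V_\alpha$ that is uniform in $\varepsilon$: a simple leading eigenvalue $1$ with non-negative eigenfunction $h_\varepsilon$ (so $\mathcal P_\varepsilon h_\varepsilon=h_\varepsilon$ and $\mu_\varepsilon=h_\varepsilon\l$ is stationary), with $h_\varepsilon\to h$ in $L^1$ and the rest of the spectrum inside a disk of radius $\kappa<1$ independent of $\varepsilon$. Writing, by duality, $\int(\mathcal U_\varepsilon^n\psi)\,\varphi\,\dif\mu_\varepsilon=\int\psi\,\mathcal P_\varepsilon^n(\varphi h_\varepsilon)\,\dif\l$ and using the gap, $\|\mathcal P_\varepsilon^n(\varphi h_\varepsilon)-h_\varepsilon\int\varphi\,\dif\mu_\varepsilon\|_\alpha\le C\kappa^n\|\varphi h_\varepsilon\|_\alpha\le C'\kappa^n\|\varphi\|_\alpha$ (using that $h_\varepsilon$ is bounded in $V_\alpha$ uniformly), which after pairing with $\psi\in L^1(\l)=L^1(\mu_\varepsilon)$ is precisely the annealed decay of correlations \eqref{RDC:L1} with $\mathcal C=V_\alpha$, and is certainly $\le Cn^{-2}$. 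For the lower bound on $h_\varepsilon$: $f$ being $C^2$ and topologically mixing, the unperturbed density $h$ is (H\"older-)continuous and strictly positive on the compact torus, hence bounded below; and $h_\varepsilon$, being $L^1$-close to $h$ and a fixed point of the averaging operator $\mathcal P_\varepsilon=\int\mathcal P_\omega\,\dif\theta_\varepsilon$, inherits a uniform bound $h_\varepsilon\ge h_m>0$ for $\varepsilon$ small --- the same requirement already imposed in the higher-dimensional example of Section~\ref{subsec:random-EVL}.

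With (a)--(d) in hand the conclusion is a direct application of the abstract results: Theorem~\ref{thm:random-EVL} yields that $X_0,X_1,\dots$ satisfies $D_2(u_n)$ and $D'(u_n)$, hence the EVL with $\bar H(\tau)=\e^{-\tau}$; Corollary~\ref{cor:random-EVL=>HTS} yields exponential HTS/RTS for balls around $\zeta$; and the slight modification of the test observable used in the proof of Corollary~\ref{cor:random-EVL=>Poisson} yields $D_3(u_n)$, whence $N_n\xrightarrow{d}N$ with $N$ a Poisson process of intensity $1$. The main obstacle is the verification of the previous paragraph: one must be certain that the Keller--Liverani machinery applies \emph{uniformly} in $\varepsilon$, i.e.\ that the Lasota--Yorke constants do not degenerate under the additive perturbation --- which hinges on the invariance of $\det Df$ noted in the footnote before Proposition~\ref{prop:DCL1-PE} --- and that the stationary density retains a positive lower bound; once this spectral input is secured, everything else is a transcription, already carried out in the proof of Theorem~\ref{thm:random-EVL}, of the deterministic arguments of Section~\ref{sec:dichotomy} into the random skew-product setting.
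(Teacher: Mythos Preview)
Your proposal is correct and follows the paper's approach essentially step for step: verify the hypotheses of Theorem~\ref{thm:random-EVL} (and its Corollaries~\ref{cor:random-EVL=>HTS}, \ref{cor:random-EVL=>Poisson}) by taking $\mathcal C=V_\alpha$, obtaining the required annealed decay of correlations from the Keller--Liverani perturbation machinery via the uniform Lasota--Yorke inequality (condition~(i)) and the closeness estimate of Proposition~\ref{prop:DCL1-PE} (condition~(ii)), and noting that $\I_{U_n}\in V_\alpha$ with uniformly bounded norm. The only mild difference is one of emphasis: you devote some effort to arguing that $h_\varepsilon$ is bounded below, whereas the paper simply \emph{requires} this as part of the standing hypotheses on the higher-dimensional example (``We will also require that the density $h$ is bounded from below by the positive constant $h_m$''), so in the paper's logic this is an assumption rather than something to be proved.
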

 
As in the previous case of maps on the circle, we may adapt the argument used in the continuous case to consider more general piecewise expanding maps of Definition \ref{def:MPE}, as long as there is a finite number of domains of local injectivity.
\begin{proposition}
\label{prop:random-multi-dimensional-EVL}
Suppose that $(Z,f,\mu)$ is a topologically mixing multidimensional piecewise expanding system as in Definition~\ref{def:MPE}, $\mu$ is the a.c.i.p.\ with a Radon-Nikodym density bounded away from 0.  We assume that there are $K\in\N$ domains of injectivity of the map and there exists $\eta>1$ such that for all $i=1,\ldots, K$ and all $x,y\in Z_i$ we have $\dist(f(x),f(y))\leq \eta\,\dist(x,y)$. Consider that such a map is randomly perturbed with additive noise as in \eqref{eq:add-perturbation}  with noise distribution given by \eqref{eq:noise-distribution} and such that the image of $Z$ is strictly included in $Z$. 
For any point $\zeta\in\mathcal M$, consider that $X_0, X_1,\ldots$ is defined as in \eqref{eq:def-rand-stat-stoch-proc-RDS2} and let $u_n$ be such that \eqref{eq:un} holds. Then the stochastic process $X_0, X_1,\ldots$ satisfies $D_2(u_n)$, $D_3(u_n)$ and $D'(u_n)$, which implies that we have an EVL for $M_n$ such that $\bar H(\tau)=\e^{-\tau}$ and we have exponential HTS/RTS for balls around $\zeta$. Moreover, the REPP $N_n$ defined
  in \eqref{eq:def-REPP} is such that
  $N_n\xrightarrow[]{d}N$, as $n\rightarrow\infty$, where $N$
  denotes a Poisson Process with intensity $1$.

\end{proposition}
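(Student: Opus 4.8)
The plan is to verify the three dependence conditions $D_2(u_n)$, $D_3(u_n)$ and $D'(u_n)$ for the process $X_0,X_1,\dots$ and then quote Theorem~\ref{thm:random-EVL} together with Corollaries~\ref{cor:random-EVL=>HTS} and \ref{cor:random-EVL=>Poisson} to read off the statement. The preliminary step is to record that the randomly perturbed system has exponential decay of correlations of $V_\alpha$-observables against $L^1$-observables, in the form \eqref{RDC:L12}. This follows from the Keller--Liverani perturbation theory \cite{KL09} exactly as set up just above this proposition: the additive perturbation leaves the derivatives of the branches unchanged, hence leaves the contraction constant $s$ and the boundary multiplicity $Z(f)$ unchanged, so \eqref{sc} yields a Lasota--Yorke inequality for $\mathcal P_\eps$ with constants uniform in $\eps$, and the closeness hypothesis is precisely Proposition~\ref{prop:DCL1-PE}; therefore $\mathcal P_\eps$ has a spectral gap on the adapted pair $(L^1,V_\alpha)$, uniformly for $\eps$ small, and dualising gives \eqref{RDC:L12} with $\mathcal C=V_\alpha$ and a summable (indeed exponential) rate. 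Since $U_n$ is a topological ball, $\I_{U_n}\in V_\alpha$ with $\|\I_{U_n}\|_\alpha$ bounded uniformly in $n$, exactly as in the proof of Proposition~\ref{prop:MPE}; and the condition $f_{\omega}(Z)\subset Z$ needed to make the random orbits well defined (cf.\ Remark~\ref{rem:interval-maps}) is guaranteed by the assumption that $f(Z)$ is strictly contained in $Z$, for $\eps$ small enough. So we are in the framework of Theorem~\ref{thm:random-EVL} even though $Z$ need not be a torus.

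With this in hand, $D_2(u_n)$ and $D_3(u_n)$ follow verbatim from the argument in the proof of Theorem~\ref{thm:random-EVL} and Corollary~\ref{cor:random-EVL=>Poisson}: one inserts $\phi=\I_{U_n}$ and the appropriate $\psi$ (the $\theta_\eps^\N$-average of the indicator of $\{\varphi,\varphi\circ f_{\tilde\o},\dots\le u_n\}$, resp.\ of $\bigcap_{i\in A\cap\N}\{f^i_{\tilde\o}\le u_n\}$) into \eqref{RDC:L12}, getting $\gamma(n,t)=C^*t^{-2}$ with $t_n=n^\beta$, $1/2<\beta<1$. The place where the continuous argument must be adapted is $D'(u_n)$, specifically the bound on the realisations along which the random orbit of $\zeta$ returns near $\zeta$ too soon. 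As in Theorem~\ref{thm:random-EVL} I would split $\Omega$ according to whether $R^{\o}(U_n)>\alpha_n$ for a sequence $\alpha_n\to\infty$; on the first set the range $\alpha_n\le j\le\lfloor n/k_n\rfloor$ is handled by the decay-of-correlations estimate \eqref{eq:estimate2} exactly as before, so the genuinely new task is to bound $\theta_\eps^\N(\{R^{\o}(U_n)\le\alpha_n\})$. Because the map now has finitely many branches, $f^j_{\o}(U_n)$ may fail to stay connected, so — as in the preceding proposition for circle maps — I would impose ``safety regions'' around $\zeta$ \emph{and} around the whole discontinuity set $\mathbb S=Z\setminus\bigcup_i Z_i$: if $\dist(f^j_{\o}(\zeta),\zeta)>2\eta^j|U_n|$ and $\dist(f^j_{\o}(\zeta),\mathbb S)>2\eta^j|U_n|$ for all $j=1,\dots,\alpha_n$, then $f^j_{\o}(U_n)$ stays inside a single domain of injectivity of $f^j_{\o}$, has diameter at most $\eta^j|U_n|$, and misses $U_n$, whence $R^{\o}(U_n)>\alpha_n$. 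Thus the bad set is contained in $\bigcup_{j=1}^{\alpha_n}(\{f^j_{\o}(\zeta)\in B_{2\eta^j|U_n|}(\zeta)\}\cup\{f^j_{\o}(\zeta)\in B_{2\eta^j|U_n|}(\mathbb S)\})$, and disintegrating over $\omega_1,\dots,\omega_{j-1}$ and using $g_\eps\le\overline{g_\eps}$ gives $\theta_\eps^\N(\{R^{\o}(U_n)\le\alpha_n\})\lesssim\sum_{j=1}^{\alpha_n}(\l(B_{2\eta^j|U_n|}(\zeta))+\l(B_{2\eta^j|U_n|}(\mathbb S)))\lesssim\eta^{N\alpha_n}\l(U_n)+\eta^{\alpha_n}|U_n|$, the second term using that $\mathbb S$ lies in finitely many compact piecewise $C^1$ codimension-one submanifolds, so its $r$-neighbourhood has Lebesgue measure $O(r)$.

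The hard part is exactly this last estimate in dimension $N\ge2$. In the continuous torus case of Theorem~\ref{thm:random-EVL} the bad set is only the codimension-$N$ point $\zeta$, whose $r$-neighbourhood has measure $\asymp r^N\asymp\l(U_n)\asymp\tau/n$, and the crude bound $II\le\frac{n^2}{k_n}\mu_\eps(U_n)\,\theta_\eps^\N(\{R^{\o}(U_n)\le\alpha_n\})$ already tends to $0$. Here the codimension-one term $\eta^{\alpha_n}|U_n|\asymp\eta^{\alpha_n}(\tau/n)^{1/N}$ is, for $N\ge2$, much larger than $\l(U_n)$, and that crude bound is no longer summable. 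To repair this I would not bound the whole sum $\sum_{j=1}^{\lfloor n/k_n\rfloor}$ by $\theta_\eps^\N(\{R^{\o}(U_n)\le\alpha_n\})$ but only the range $j\le\alpha_n$: indeed, for $j\le\alpha_n$ and $\o$ in the good set one has $U_n\cap(f^j_{\o})^{-1}(U_n)=\es$, so $\sum_{j=1}^{\alpha_n}\p(U_n\cap f^{-j}_{\o}(U_n))\le\alpha_n\,\mu_\eps(U_n)\,\theta_\eps^\N(\{R^{\o}(U_n)\le\alpha_n\})$, while the range $\alpha_n<j\le\lfloor n/k_n\rfloor$ is absorbed, as in Theorem~\ref{thm:random-EVL}, into the decay-of-correlations term coming from \eqref{eq:estimate2}. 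Multiplying by $n$, it then suffices that $n\,\alpha_n\,\mu_\eps(U_n)(\eta^{N\alpha_n}\l(U_n)+\eta^{\alpha_n}|U_n|)\to0$; since $\mu_\eps(U_n)\sim\tau/n$, $\l(U_n)\asymp\tau/n$ and $|U_n|\asymp(\tau/n)^{1/N}$, this reduces to $\alpha_n\eta^{N\alpha_n}=o(n)$ and $\alpha_n\eta^{\alpha_n}=o(n^{1/N})$, which both hold as soon as $\alpha_n\to\infty$ slowly enough, e.g.\ $\alpha_n=\lfloor(2N\log\eta)^{-1}\log n\rfloor$; this is still compatible with the requirement $\alpha_n\to\infty$ used in the treatment of the range $j\ge\alpha_n$. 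The finitely many discontinuity points are incorporated by adding one safety region per point, exactly as in the circle-map proposition. Once $D_2(u_n)$, $D_3(u_n)$ and $D'(u_n)$ are in place, Theorem~\ref{thm:random-EVL} gives the EVL with $\bar H(\tau)=\e^{-\tau}$, Corollary~\ref{cor:random-EVL=>HTS} the exponential HTS/RTS for balls around $\zeta$, and Corollary~\ref{cor:random-EVL=>Poisson} the convergence of the REPP to the Poisson process of intensity $1$, which is the assertion.
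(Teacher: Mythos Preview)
Your argument is essentially correct but takes a genuinely different route from the paper. The paper does \emph{not} try to keep $f^j_{\o}(U_n)$ connected by putting safety regions around the codimension-one discontinuity set $\mathbb S$. Instead, it lets the images split: for each realisation $\o$, at every step $j$ one records the connected components $U_n^{(j,\ell)}$ of $f_{\sigma^{j-1}(\o)}(U_n^{(j-1,k)})\cap Z_i$ (at most $K^j$ of them), picks a representative $\zeta_{j,\ell}$ in each, and declares $\o$ ``good'' when every representative's next image stays at distance $>2\eta^{j+1}|U_n|$ from $\zeta$. Each such condition costs $\overline{g_\eps}\,\l(B_{2\eta^{j}|U_n|}(\zeta))\lesssim\eta^{Nj}\l(U_n)$ in the $\omega_j$-variable, so $\theta_\eps^\N(\{R^{\o}(U_n)\le\alpha_n\})\lesssim(\eta^N K)^{\alpha_n}\l(U_n)$, which has exactly the same shape as the estimate in Theorem~\ref{thm:random-EVL} with $\eta$ replaced by $\eta^N K$; the rest of the proof (including the crude split into $I$ and $I\!I$ with $I\!I$ summed up to $\lfloor n/k_n\rfloor$) then goes through verbatim with $\alpha_n=o(\log k_n)$. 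Your alternative avoids the exponential branching $K^{\alpha_n}$ but pays the codimension-one price $\eta^{\alpha_n}|U_n|$, which forces you to sharpen the split so that the bad set only multiplies $\alpha_n$ terms rather than $\lfloor n/k_n\rfloor$; that sharpening is valid and your choice $\alpha_n\asymp\log n$ works. Two small points: your last sentence about ``finitely many discontinuity points'' is a slip (in dimension $N\ge2$ the discontinuity set is the codimension-one $\mathbb S$, as you correctly use earlier), and your inductive argument needs $U_n$ itself to lie in a single $Z_i$, which fails when $\zeta\in\mathbb S$; the paper's piece-tracking handles that case automatically.
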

\begin{proof}

Previously, for maps on the circle, by putting some ``safety regions'' around the discontinuity points we guaranteed that the iterates of $f_{\o}^j(U_n)$, $j=0,1,\ldots, \alpha_n$ had one connected component. Since in this case the border of the domains of injectivity are codimension-1 submanifolds instead of single points (as in the 1-dimensional case), we must proceed to a more thorough analysis. To that end, for each $\o$, for $j=1$ let $1\leq l_1\leq K$ be the number of  intersections with non-empty interior between  $f_{\o}(U_n)$ and $Z_i$, with $i=1,\ldots,K$. For each $\ell=1,\ldots, l_1$, let $i_\ell$ denote the index of the partition element $Z_{i_\ell}$ for which such intersection has non-empty interior, define $U_n^{(1,\ell)}:=f_{\o}(U_n)\cap Z_{i_\ell}$ and let $\zeta_{1,\ell}$ be a point in the interior of $U_n^{(1,\ell)}$. For any $j=2,\ldots, \alpha_n$, given the sets $U_n^{(j-1,k)}$, with $k=1,\ldots, l_{j-1}$, let $l_j$ be the total number of intersections of non-empty interior between $f_{\sigma^{j-1}(\o)}\left(U_n^{(j-1,k)}\right)$ and $Z_i$, with $i=1,\ldots,K$. For each $\ell=1,\ldots, l_j$, let $i_\ell$ denote the index of the partition element $Z_{i_\ell}$ and $k_\ell$ the super index of the sets $U_n^{(j-1,k)}$ for which the intersection between $f_{\sigma^{j-1}(\o)}\left(U_n^{(j-1,k_\ell)}\right)$ and $Z_{i_\ell}$ has non-empty interior, define $U_n^{(j,\ell)}=f_{\sigma^{j-1}(\o)}\left(U_n^{(j-1,k_\ell)}\right)\cap Z_{i_\ell}$ and let $\zeta_{j,\ell}$ be a point in the interior of $U_n^{(j,\ell)}$.  

In order to avoid the first return time to $U_n$ occurring before $\alpha_n$ in a similar way to the previous proofs, we require that:
\begin{equation}
\label{eq:piecewise-multidimensional-return}
\dist(f_{\sigma^{j-1}(\o)}(\zeta_{j-1,\ell}),\zeta)>2\eta^j|U_n| \textrm{ for all } j=2,\ldots,\alpha_n ,\; \ell=1,\ldots, l_{j-1}.
\end{equation}
Note that, as in the proof of Theorem~\ref{thm:random-EVL}, for any $\o\in\Omega$, we have $|U_n^{(j,\ell)}|\leq \eta^j |U_n|$. This implies that
\begin{equation}
\label{eq:fact3}
\mbox{if $\dist(f_{\sigma^{j-1}(\o)}(\zeta_{j-1,\ell}),\zeta)>2\eta^j|U_n|>|U_n|+\eta^j|U_n|$ then $U_n^{(j,\ell)}\cap U_n=\emptyset$}.
\end{equation}
Note that, by equation \eqref{eq:fact3}, if \eqref{eq:piecewise-multidimensional-return} holds then clearly $R^{\o}(U_n)>\alpha_n$. Hence, letting $l_0=1$ and $\zeta_{0,1}=\zeta$, we may write that
$$\big\{\o:\; R^{\o}(U_n)\leq\alpha_n\big\}\subset \bigcup_{j=1}^{\alpha_n} \bigcup_{\ell=1}^{l_{j-1}} \big\{\o:\; f_{\sigma^{j-1}(\o)}(\zeta_{j-1,\ell})\in B_{2\eta^j|U_n|}(\zeta)\big\}.$$
Recalling that $l_j\leq K^j$, for all $j=1,\ldots,\alpha_n$, it follows that, there exists some $C>0$ such that
\begin{align*}
\theta_\eps^\N&\big(\big\{\o:\; R^{\o}(U_n)\leq\alpha_n\big\}\big)\leq \sum_{j=1}^{\alpha_n} \sum_{\ell=1}^{l_{j-1}}\int \theta_\eps\Big(\Big\{\omega_j: f\left(\zeta_{j-1,\ell}\right)+\omega_j\in  B_{2\eta^j|U_n|}(\zeta) \Big\}\Big) \dif\theta^\N_\eps\\
&\leq \sum_{j=1}^{\alpha_n} \sum_{\ell=1}^{l_{j-1}} \overline{g_\eps}\l\left(B_{2\eta^j|U_n|}(\zeta)\right)\leq\sum_{j=1}^{\alpha_n} K^j\overline{g_\eps} C\eta^j\l(U_n)
\leq C\overline{g_\eps} \l(U_n)\frac{\eta K}{\eta K-1}(\eta K)^{\alpha_n}
\end{align*}
Now, the proof follows exactly in the same way as the proof of Theorem~\ref{thm:random-EVL} and Corollary~\ref{cor:random-EVL=>Poisson}, except that in the final estimate (\ref{eq:final-estimate}), $\eta$ should be replaced by $\eta K$, which will not make any difference by the choice of $\alpha_n$ defined in (\ref{eq:alpha-n}).
\end{proof}

\section{Extremes for random dynamics from a spectral approach}
\label{sec:Keller}
In this section, we want to prove our results for the random case using another approach introduced by Keller in \cite{K12}.  His technique is based on an eigenvalue perturbation formula which was given in \cite{KL09} under a certain number of  assumptions that we recall in the first subsection and adapt to our situation. We check those assumptions in Section~\ref{subsec:checking-assumptions} for a large class of  maps of the interval whose properties are listed in the conditions ({\bf H1}-{\bf H5}). Possible generalisations deserve to be investigated and we point out here a major difficulty in  higher dimensions. In this case one should control  (any kind of) variation/oscillation on the boundaries of the preimages of the complement of balls (the set $U^c_m$ in the proof of Proposition 5.2 below; it is important that such variation/oscillation grows at most sub-exponentially). 
To sum up, the direct technique introduced in Section 4 and the spectral one in this section are complementary. The direct technique is easily adapted to higher dimensions but it requires assumptions on the noise in order to control the short returns (see the quantity $R^{\underline{\omega}}(U_n)$ in Proposition 4.5), which follows easily for additive noise. The spectral technique is an alternative method and for the moment particularly adapted to the 1-D case and, as we will see in a moment, the noise could be chosen in a quite general way to prove the existence of the EI, formula (\ref{AEA}). Instead, if we want to characterise such an EI and show that it is always equal to $1$, we need to consider special classes of uniformly expanding maps and particularly the noise should be chosen as additive and with a continuous distribution (Proposition \ref{calq}). The fact that the existence of EI follows for general classes of noises is clear by looking at the proof of Proposition \ref{KUA}. Indeed what is really necessary is that the derivatives of the randomly chosen maps are close enough to each other in order to guarantee the uniformity of the Lasota-Yorke inequality for the perturbed Perron-Frobenius operator. This could be achieved quite widely and with discrete distributions as well. Nevertheless, in order to make the exposition simpler and coherent with the previous sections, we will consider additive noise, together with {\em any kind of  distribution} to prove Proposition \ref{KUA} and with  {\em absolutely continuous distributions} to prove Proposition \ref{calq}.

\subsection{The setting}\label{subsec:REPFO}

Given a Banach space $(V,\|\cdot\|)$, and a set
 of parameters $E$ which is equipped with some topology,
  let us suppose there are $\lambda_\eps\in\C$,
   $\varphi_\eps\in V$, $\nu_\eps\in V'$ ($V'$ denotes the dual of V) and linear operators $P_{\eps}, Q_{\eps}: V\to V$ such that
\begin{gather}\label{Keller-condition-1}
\lambda_\eps^{-1} P_{\eps}=\varphi_\eps\otimes\nu_\eps+
Q_{\eps} \textrm{ (assume $\lambda_0=1$) },\\
P_{\eps}(\varphi_\eps)=\lambda_\eps\varphi_\eps, \nu_\eps P_{\eps}=
\lambda_\eps\nu_\eps,\, Q_{\eps}(\varphi_\eps)=0,\, \nu_\eps Q_{\eps}=0,\\
\displaystyle\sum_{n=0}^{\infty}\sup_{\eps\in E}\|Q_{\eps}^n\|
=: C_1< \infty,\\
\exists C_2>0,\forall\eps\in E: \nu_0(\varphi_\eps)=1 \textrm{ and } \|\varphi_\eps\|\leq C_2 < \infty,\\
\displaystyle\lim_{\eps\to 0}\|\nu_0(P_0-P_{\eps})\|=0,\\
\|\nu_0(P_0-P_{\eps})\|\cdot\|(P_0-P_{\eps})\varphi_0\| \leq
\textrm{const}\cdot |\Delta_\eps| \label{Keller-condition-6}
\end{gather}
where $$\Delta_\eps:=\nu_0((P_0-P_{\eps})(\varphi_0)).$$

Under these assumptions, Keller and Liverani got the following formula as  the main result in \cite{KL09}:
\begin{equation}\label{AEA}
1-\lambda_\eps=\Delta_\eps\vartheta(1+o(1)) \textrm{ in the limit as } \eps\to 0
\end{equation}
where $\vartheta$ is said to be a constant to take care of short time correlations, which is later identified as the extremal index in extreme value theory context as mentioned in \cite[Section~1.2]{K12}. Actually $\vartheta$ is  given by an explicit and, in some cases, computable formula, and, in fact, we will be able to compute it for our random systems. This formula is the content of Theorem 2.1 in \cite{KL09} and states that under the above assumptions, in particular when $\Delta_{\eps}\neq 0$, for $\eps$ small enough, and whenever the following limit exists
\begin{equation}\label{eq:qk}
q_k:=\lim_{\eps\rightarrow 0}q_{k,\eps}:= \lim_{\eps\rightarrow
\infty}\frac{\nu_0((P_0-P_{\eps})P_{\eps}^k(P_0-P_{\eps})(\varphi_0))}
{\Delta_\eps},
\end{equation}
we have
\begin{equation}\label{eq:qk2}
\lim_{\eps\rightarrow 0}\frac{1-\lambda_{\eps}}{\Delta_\eps}=\vartheta:=1-\sum_{k=0}^{\infty}q_k.
\end{equation}
We now state equivalent ways to verify assumptions \eqref{Keller-condition-1}-\eqref{Keller-condition-6}, we refer to \cite{K12} for the details.

\begin{itemize}
\item[\bf (A1)] \ There are constants $ A>0, B>0, D>0$ and a second norm $|\cdot|_{\omega}\leq \|\cdot\|$ on $V$ (it is enough to be a seminorm) such that:
  \begin{gather}
  \forall\eps\in E, \forall \psi \in V, \forall n \in \N: |P_{\eps}^n \psi|_{\omega}\leq D |\psi|_{\omega}\label{eq:asmp2}\\
  \exists\alpha\in (0,1), \forall\eps \in E, \forall \psi\in V, \forall n
  \in \N: \|P_{\eps}^n \psi\| \leq A\alpha^n \|\psi\| +B |\psi|_{\omega}\label{eq:asmp3}
  \end{gather}
  Moreover the closed unit ball of $(V, \left\|\cdot\right\|)$, is $|\cdot|_\omega$-compact.
  \item[{\bf (A2)}] The unperturbed operator verifies the mixing condition $$
  P=\varphi\otimes\nu+Q_0\textrm{ (assume $\lambda_0=1$) }
  $$
\item [\bf (A3)] $\exists C>0$ such that
\begin{equation}
\eta_{\eps}:=\sup_{\left\|\psi\right\|\le1}\left|\int (P_0-P_{\eps})\psi
\,\dif\nu_0\right|\rightarrow 0, \ \mbox{as} \ \eps \rightarrow 0
\end{equation}
\item [\bf (A4)] and
\begin{equation}
\eta_{\eps} \ \left\|(P_0-P_{\eps})\varphi_0\right\|\le C \
\Delta_{\eps}
\end{equation}
\end{itemize}

 Keller called this framework {\em Rare events Perron-Frobenius operators}, REPFO. We will construct a {\em perturbed} Perron-Frobenius operator which satisfies the previous assumptions and which will give us information on extreme value distributions and statistics of first returns to small sets.

Before continuing, we should come back to our extreme distributions, namely to the quantity
$\{M_m\leq u_m\}=\{r_{\{\phi>u_m\}}>m\}$  where  $U_m:=\{\phi>u_m\}$
is a topological ball shrinking to the point $\zeta$ (see \eqref{def:Un}: we changed $U_n$ into $U_m$ here).
Now we consider the first time $r_{U_m}^{\o}(x)$ the point $x$ enters $U_m$ under the realization $\o$, namely under the composition  $\cdots\circ f_{\omega_k}\circ f_{\omega_{k-1}}\circ\cdots \circ f_{\omega_1}(x)$. For simplicity we indicate it by $r_{m}^{\o}(x)$ and consider its annealed distribution:
\begin{equation}
\label{eq:meas}
(\mu_{\eps} \text{ x } \nm)((x,\o): r_{m}^{\o}(x) > m)=(\mu_{\eps} \text{ x } \nm)(M_m\leq u_m)
\end{equation}
Let us write the measure on the left-hand side of ~\eqref{eq:meas} in terms of integrals: it is given by
\begin{equation}
\label{eq:intmeas}
\iint\limits_{\{r_{m}^{\o}>m\}} \dif(\mu_{\eps} \text{ x } \nm)=\iint h_{\eps} \I_{U_m^c}(x)\I_{U_m^c}(f_{\omega_1}x)\cdots \I_{U_m^c}(f_{\omega_{m-1}}\circ\cdots\circ f_{\omega_1}x)\, \dif\l\, \dif\nm
\end{equation}
which is in turn equal to
\begin{equation}\label{eq:intOp}
\int_M\widetilde{\P}_{\eps,m}^m h_{\eps}(x)\,\dif\l
\end{equation}
where we have now defined
\begin{equation}\label{rpfo}
\widetilde{\P}_{\eps,m}\psi(x):=\P_{\eps}({\bf 1}_{U_m^c}\psi)(x).
\end{equation}

Let us note that the operator $\widetilde{\P}_{\eps,m}$ depends on $m$ via the set $U_m$, and not on $\eps$ which is kept fixed and that $\widetilde{\P}_{\eps,m}$ ``reduces'' to $\P_{\eps}$ as $m\rightarrow \infty$. It is therefore tempting to consider $\widetilde{\P}_{\eps,m}$ as a small perturbation of $\P_{\eps}$ when $m$ is large and to check if it shares the spectral properties of a REPFO operator. We will show in a moment that it will be the case; let us now see what that implies for our theory.
\subsection{Limiting distributions}
\label{subsec:lim-dist}

We now indicate the correspondences between the general notations of
Keller's results and our own quantities:
\begin{gather*}
P_0\Rightarrow {\mathcal P}_{\eps}\\
P_{\eps}\Rightarrow
\widetilde{\P}_{\eps,m}; \ Q_{\eps}\Rightarrow Q_{\eps, m}\\
\varphi_{\eps}\Rightarrow \varphi_{\eps,m}; \ \varphi_0°\Rightarrow h_{\eps}\\
\lambda_{\eps}\Rightarrow \lambda_{\eps,m}\\
\nu_{\eps} \Rightarrow \nu_{\eps,m}; \ \nu_0\Rightarrow \mbox{Leb}\\
\Delta_{\eps}\Rightarrow
\Delta_{\eps,m}=\mu_{\eps}(U_m)=\l((\P_{\eps}-\widetilde{\P}_{\eps,m})h_{\eps})
\end{gather*}
The framework for which we will prove the assumptions {\bf (A1)-(A4)} for our REPFO $\widetilde{\P}_{\eps,m}$ are those behind the system and its perturbations which we introduced in the previous sections and which we summarize here:\\

{\bf Hypotheses on the system and its perturbations}\\
We consider piecewise expanding maps $f$ of the circle or of the interval $I$ which verify:\\
{\bf H1} The map $f$ admits a (unique) a.c.i.p.\ which is mixing.\\
{\bf H2} 
We will require that
\begin{equation}\label{du1}
 \inf_{x\in I}\left|Df(x)\right|\ge \beta >1.
\end{equation}
and
\begin{equation}\label{du2}
 \sup_{x\in I²}\left|\frac{D^2f(x)}{Df(x)}\right|\leq C_1<\infty,
 \end{equation}
whenever the first and the second derivatives are defined.\\
{\bf H3} The couple of adapted spaces upon which the REPFO operators will act are:  the space of functions of bounded variation (as in Definition~\ref{def:variation}, we will indicate with ${\rm Var}$ the total variation), and $L^1(\l)$, with norm $\left\|\cdot\right\|_1$; this time, we will write $\left\|\cdot\right\|_{BV}=\V(\cdot)+\left\|\cdot\right\|_1$ for the associated Banach norm.\\
{\bf H4} There exists a monotone upper semi-continuous function $p:\Omega\rightarrow [0,\infty)$ such that $\lim_{\eps\rightarrow 0}p_{\eps}=0$ and $\forall f\in$ BV, $\forall \eps\in \Omega: ||{\mathcal P}f-{\mathcal P}_{\eps}f||_1\le p_{\eps}||f||_{BV}$\footnote{This condition can be checked in several cases. We did it, for instance, in the previous section 4.2.2. A general theorem is presented in Lemma 16 in \cite{K82}  for piecewise expanding maps of the interval endowed with our pair of adapted spaces and with the noise given by a convolution kernel. This means that $\theta_{\eps}$ is absolutely continuous with respect to Lebesgue on the space $\Omega$ with density $s_{\eps}$, and our two operators are related by the convolution formula ${\mathcal P}_{\eps}g(x)=\int_{\Omega}({\mathcal P}g)(x-\omega)s_{\eps}(\omega)\dif\omega$, where $g\in BV$. In the case of additive noise, it is straightforward to check that the previous formula is equivalent to ${\mathcal P}_{\eps}g(x)=\int_{\Omega}({\mathcal P}_{\omega}g)(x)s_{\eps}(\omega)\dif\omega$, where ${\mathcal P}_{\omega}$ is the Perron-Frobenius operator of the transformation $f_{\omega}$.}.\\
{\bf H5} The density $h_{\eps}$ of the stationary measure is bounded from below $\l$-a.e.\ and we call this bound $\underline{h}_{\eps}$.\\

{\bf Extreme values}\\
Let us therefore write $\widetilde{\P}_{\eps,m}\varphi_{\eps,m}=\lambda_{\eps, m}\varphi_{\eps,m}$, $\nu_{\eps,m}\widetilde{\P}_{\eps,m}=\lambda_{\eps,m}\nu_{\eps,m}$, and $\lambda_{\eps,m}^{-1}\widetilde{\P}_{\eps,m}=\varphi_{\eps,m}\otimes\nu_{\eps,m}+Q_{\eps,m}$.

Then formula (\ref{AEA}) implies that
$1-\lambda_{\eps,m}=\Delta_{\eps,m} \vartheta_{\eps} (1+o(1))$. We can
therefore write:
\begin{align*}
(\mu_{\eps} \text{ x } \nm)(M_m\leq u_m)&=\int_M\widetilde{\P}_{\eps,m}^mh_{\eps}(x) \,\dif\l=\lambda_{\eps,m}^m\int h_{\eps}\,\dif\nu_{\eps,m}+\lambda_{\eps,m}^m\int Q_{\eps,m}h_{\eps}\,\dif\l\\
&=e^{-(\vartheta_{\eps} m\,\mu_{\eps}(U_m)+m\, o(\mu_{\eps}(U_m)))}\int h_{\eps}\,\dif\nu_{\eps,m}+ {\mathcal O}(\lambda_{\eps,m}^m \left\|Q_{\eps,m}\right\|_{BV})
\end{align*}
Remember that we are under the assumption that $m\, (\mu_{\eps} \text{ x } \nm)(\phi>u_m)=  m\, \mu_{\eps} (\phi>u_m)= m\,\mu_{\eps}(U_m)\rightarrow \tau$, when $m\rightarrow \infty$; moreover it follows from the theory of \cite{KL09} that $\int h_{\eps}\, \dif\nu_{\eps,m}\rightarrow \int h_{\eps}\, \dif\l=1$, as $m$ goes to infinity. In conclusion we get
$$
(\mu_{\eps} \text{ x } \nm)(M_m\leq u_m)=e^{-\tau \vartheta_{\eps}}(1+o(1))
$$
in the limit $m\rightarrow \infty$ and where $\vartheta_{\eps}$ will be the extremal index and this will be explicitly computed later on for some particular maps thanks to formula \eqref{eq:qk2} and shown to be equal to $1$ for any point $\zeta$, see Proposition \ref{calq} below.\\

{\bf Random hitting times}\\
Let us denote again with $r_{U_m}^{\o}(x)$ the first entrance into the ball $U_m$. A direct application of \cite[Proposition~2]{K12} and which is true for REPFO operators, allows to get the following result, which we  adapted to our situation and which provides an explicit formula for the statistics of the first hitting times in the annealed case. Notice that this result strenghtens our Corollary~\ref{cor:random-EVL=>HTS} since it provides the error in the convergence to the exponential law.
\begin{proposition}
\label{prop:hts-error-terms}
For the REPFO  $\widetilde{\P}_{\eps,m}$ which verifies the hypotheses {\bf H1}-{\bf H5},  and using the notations introduced above,  there exists a constant $C>0$ such that for all $m$ big enough there exists $\xi_m>0$ s.t. for all $t>0$
$$
\left|(\mu_{\eps} \times \nm)\left\{r_{U_m}^{\o}>\frac{t}{\xi_m\,\mu_{\eps}(U_m)}\right\}-e^{-t}\right|\leq C \delta_m (t\vee1)e^{-t}
$$
where $\delta_m=O(\eta_m \log \eta_m)$,
$$
\eta_m=\sup \left\{\int_{U_m}\psi\, \dif\l; \| \psi \|_{BV} \leq 1\right\}
$$
and $\xi_m$ goes to $\vartheta_{\eps}$ as $m\rightarrow \infty$.
\end{proposition}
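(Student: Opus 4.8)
The plan is to recognize Proposition~\ref{prop:hts-error-terms} as essentially a black-box consequence of the REPFO machinery recalled in Section~\ref{subsec:REPFO}, once we have verified hypotheses {\bf (A1)}--{\bf (A4)} for the operator $\widetilde{\P}_{\eps,m}$ under the structural assumptions {\bf H1}--{\bf H5}. Concretely, the first and most substantial step is to check that $\widetilde{\P}_{\eps,m}\psi = \P_\eps(\I_{U_m^c}\psi)$ is a legitimate small perturbation of $\P_\eps$ in the sense of \cite{KL09}, i.e.\ that the quadruple $(P_0, P_\eps, \text{second norm } \|\cdot\|_1, \text{strong norm }\|\cdot\|_{BV})$ with the identifications $P_0\Rightarrow\P_\eps$, $P_\eps\Rightarrow\widetilde{\P}_{\eps,m}$ satisfies the uniform Lasota--Yorke inequality {\bf (A1)}, the mixing property {\bf (A2)} for the unperturbed operator $\P_\eps$, and the closeness estimates {\bf (A3)}--{\bf (A4)}. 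The uniform Lasota--Yorke inequality for $\widetilde{\P}_{\eps,m}^n$ follows from the one for $\P_\eps^n$ (which itself comes from {\bf H2} and {\bf H4}, exactly as argued in Section~4.2.2 and Proposition~\ref{prop:DCL1-PE}) together with the standard observation that multiplication by the indicator $\I_{U_m^c}$ of an interval only adds a bounded contribution to the total variation: $\V(\I_{U_m^c}\psi)\le \V(\psi)+2\sup|\psi| + 2\,\mathrm{osc}(\psi, \partial U_m)$, so the relevant constants can be taken independent of $m$. The mixing property {\bf (A2)} is precisely {\bf H1} transported to $\P_\eps$ via {\bf H4} and the Keller--Liverani stability theory. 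For {\bf (A3)}--{\bf (A4)}, one writes
\[
\eta_m = \sup_{\|\psi\|_{BV}\le 1}\Big|\int (\P_\eps - \widetilde{\P}_{\eps,m})\psi\,\dif\l\Big| = \sup_{\|\psi\|_{BV}\le 1}\Big|\int \P_\eps(\I_{U_m}\psi)\,\dif\l\Big| = \sup_{\|\psi\|_{BV}\le 1}\int_{U_m}\psi\,\dif\l,
\]
which tends to $0$ as $m\to\infty$ since $\l(U_m)\to 0$ and $BV$ embeds in $L^\infty$; and $\|(\P_\eps-\widetilde{\P}_{\eps,m})h_\eps\| = \|\P_\eps(\I_{U_m}h_\eps)\|_{BV}$ is controlled by $\mu_\eps(U_m)=\Delta_{\eps,m}$ up to a bounded factor using {\bf H5} (the lower bound $\underline h_\eps$) and the bounded-variation norm of $h_\eps$, yielding {\bf (A4)}.

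Once {\bf (A1)}--{\bf (A4)} are in place, the result is a direct invocation of \cite[Proposition~2]{K12}, which is the quantitative hitting-time statement valid for any REPFO. Indeed, from \eqref{eq:intOp}--\eqref{rpfo} we have $(\mu_\eps\times\nm)(r^{\o}_{U_m}>m) = \int_M \widetilde{\P}_{\eps,m}^m h_\eps\,\dif\l$, and expanding via the spectral decomposition $\lambda_{\eps,m}^{-1}\widetilde{\P}_{\eps,m} = \varphi_{\eps,m}\otimes\nu_{\eps,m} + Q_{\eps,m}$ gives $\lambda_{\eps,m}^m \int h_\eps\,\dif\nu_{\eps,m} + O(\lambda_{\eps,m}^m\|Q_{\eps,m}^m\|_{BV})$. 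The uniform bound $\sum_n\sup\|Q_{\eps,m}^n\|<\infty$ from {\bf (A1)} makes the error term exponentially small, $\int h_\eps\,\dif\nu_{\eps,m}\to 1$, and $1-\lambda_{\eps,m} = \xi_m\Delta_{\eps,m}(1+o(1))$ with $\Delta_{\eps,m}=\mu_\eps(U_m)$ and $\xi_m\to\vartheta_\eps$ by \eqref{eq:qk2}. Re-scaling time by the Kac factor $t/(\xi_m\mu_\eps(U_m))$ and collecting the error terms exactly as in \cite[Proposition~2]{K12} produces the stated inequality, with $\delta_m = O(\eta_m\log\eta_m)$ tracking the rate at which $\lambda_{\eps,m}^{-m}$, $\int h_\eps\,\dif\nu_{\eps,m}$ and $\|Q_{\eps,m}^m\|$ approach their limits; the $\log$ arises, as in Keller's analysis, from optimizing the interpolation between the $O(\eta_m)$ perturbation estimate and the geometric decay governed by the spectral gap.

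The main obstacle is the verification of the uniform Lasota--Yorke inequality {\bf (A1)} for the family $\{\widetilde{\P}_{\eps,m}\}_{m}$ \emph{with constants independent of $m$}: one must check that truncating by $\I_{U_m^c}$ does not deteriorate the contraction constant $\alpha\in(0,1)$ in $\|\widetilde{\P}_{\eps,m}^n\psi\|_{BV}\le A\alpha^n\|\psi\|_{BV}+B\|\psi\|_1$, and in particular that the extra boundary terms created at each application of the truncation do not accumulate. In one dimension this is manageable because $\partial U_m$ consists of two points and the expansion {\bf H2} pushes these boundary contributions together with the geometric factor $\beta^{-n}<1$; this is exactly the point where the higher-dimensional extension fails, as the authors note in the paragraph opening Section~\ref{sec:Keller} (controlling the oscillation on the boundaries $\partial U_m^c$ of preimages). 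Everything else — the spectral expansion, the identification of $\vartheta_\eps$, and the bookkeeping of the $\eta_m\log\eta_m$ error — is a faithful transcription of Keller's argument to the random evolution operator $\P_\eps$, which is itself a legitimate Perron--Frobenius operator acting on $(BV, L^1)$ by {\bf H4}.
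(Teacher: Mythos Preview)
Your proposal is correct and follows essentially the same route as the paper: the paper states that Proposition~\ref{prop:hts-error-terms} is ``a direct application of \cite[Proposition~2]{K12}'' once the REPFO assumptions {\bf (A1)}--{\bf (A4)} have been verified for $\widetilde{\P}_{\eps,m}$, and it carries out that verification separately in Proposition~\ref{KUA}. Your write-up simply folds the verification and the invocation of Keller's result into one argument; the only cosmetic difference is that the paper's proof of {\bf (A1)} proceeds by a direct branch-by-branch computation of $\V(\widetilde{\P}_{\eps,m}^n\psi)$ (counting the at most $2(n+1)$ jumps produced by $g_n$ and absorbing them into the factor $\beta^{-n}$), rather than deducing it from the Lasota--Yorke inequality for $\P_\eps^n$ as you first phrase it --- but you correctly identify this mechanism in your final paragraph, so the substance is the same.
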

\subsection{Cheking assumptions (A1)-(A4)}
\label{subsec:checking-assumptions}

\begin{proposition}\label{KUA}
For the REPFO  $\widetilde{\P}_{\eps,m}$ which verifies the hypotheses {\bf H1}-{\bf H5},  the assumptions {\bf (A1)}-{\bf (A4)} hold.
\end{proposition}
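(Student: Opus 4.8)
The plan is to verify the four conditions {\bf (A1)}--{\bf (A4)} of Keller's framework for the operators $\widetilde{\P}_{\eps,m}$ defined in \eqref{rpfo}, keeping the noise level $\eps$ fixed and treating $m$ (equivalently, the radius of the ball $U_m$) as Keller's perturbation parameter, as prescribed by the correspondence table above. Thus every constant produced below must be uniform in $m$, though it may depend on $\eps$. The weak seminorm will be $|\cdot|_{\omega}=\|\cdot\|_1$, which is dominated by the Banach norm $\|\cdot\|_{BV}$, and $|\cdot|_\omega$-compactness of the closed $BV$-unit ball is Helly's selection principle.

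The hard part will be the uniform-in-$m$ Lasota--Yorke inequality \eqref{eq:asmp3} for $\widetilde{\P}_{\eps,m}$, which is the substance of {\bf (A1)}; the companion bound \eqref{eq:asmp2} is trivial, since $0\le\I_{U_m^c}\le1$ and each $\mathcal P_\omega$ (hence $\mathcal P_\eps$, after averaging over $\theta_\eps$) is a weak contraction of $L^1(\l)$, so $\|\widetilde{\P}_{\eps,m}^{\,n}\psi\|_1\le\|\psi\|_1$ for all $n,m$, i.e.\ $D=1$. For \eqref{eq:asmp3} I would start from the uniform-in-$\eps$ Lasota--Yorke inequality for $\mathcal P_\eps$ itself --- this is where {\bf H2} is used directly, together with the fact, already exploited in Section~\ref{sec:random} for the multidimensional case, that additive noise leaves all derivatives unchanged, so $\beta$ and $C_1$ (hence all the Lasota--Yorke constants) pass unchanged to every $f_\omega$ and to $\mathcal P_\eps$. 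One then has to control the multiplication $M\colon\phi\mapsto\I_{U_m^c}\phi$; crude estimates fail because $M$ is not a contraction of $\|\cdot\|_{BV}$ and restricting to $U_m^c$ can create arbitrarily short monotonicity intervals. The resolution, following Keller \cite{K12}, uses crucially that $U_m$ is a \emph{single} interval: the extra contribution to $\V\big(\mathcal P_\omega(\I_{U_m^c}\phi)\big)$ is an oscillation of $\phi$ localised near the two preimages of $\partial U_m$, and one further application of the expanding dynamics absorbs such localised oscillations into $\|\cdot\|_1$. Writing $\widetilde{\P}_{\eps,m}^{\,n}\psi$ as an average over realisations $(\omega_1,\dots,\omega_n)$ of transfer operators of the compositions $f_{\omega_n}\circ\cdots\circ f_{\omega_1}$ ``with a hole at $U_m$'' --- which by {\bf H2} expand by at least $\beta^{\,n}$ with distortion bounded uniformly in $n$ --- the accumulated localised-oscillation terms form a geometric series in $\beta^{-1}$, yielding \eqref{eq:asmp3} with $A,B>0$ and $\alpha\in(0,1)$ depending only on $\beta$, $C_1$ and the number of branches, hence independent of $m$.

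For {\bf (A2)} I would show that the fixed operator $\mathcal P_\eps$ has a spectral gap on $BV$: $\mathcal P_\eps=h_\eps\otimes\l+Q$ with $\sum_n\|Q^n\|_{BV}<\infty$. By {\bf H1} the deterministic operator $\mathcal P$ has such a gap, with $1$ a simple leading eigenvalue and the rest of the spectrum inside a disk of radius $<1$; the uniform-in-$\eps$ Lasota--Yorke inequality together with the closeness property {\bf H4} place us in the Keller--Liverani perturbation setting of \cite{KL09}, so the gap persists for the fixed (small) $\eps$, with $h_\eps$ the fixed density --- strictly positive by {\bf H5} --- and $\l$ the corresponding left eigenvector; this is exactly the decomposition {\bf (A2)} demands.

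Finally, {\bf (A3)} and {\bf (A4)} are short computations. Since $(\mathcal P_\eps-\widetilde{\P}_{\eps,m})\psi=\mathcal P_\eps(\I_{U_m}\psi)$ and $\int\mathcal P_\eps g\,\dif\l=\int g\,\dif\l$, one gets $\int(\mathcal P_\eps-\widetilde{\P}_{\eps,m})\psi\,\dif\l=\int_{U_m}\psi\,\dif\l$, so
\[
\eta_m=\sup_{\|\psi\|_{BV}\le1}\Big|\int_{U_m}\psi\,\dif\l\Big|\le\l(U_m)\xrightarrow[m\to\infty]{}0,
\]
which is {\bf (A3)}. For {\bf (A4)}, applying \eqref{eq:asmp3} (with $n=1$) to $(\mathcal P_\eps-\widetilde{\P}_{\eps,m})h_\eps=\mathcal P_\eps(\I_{U_m}h_\eps)$ and using $\|\I_{U_m}h_\eps\|_{BV}\le 3\|h_\eps\|_{BV}$ and $\|\I_{U_m}h_\eps\|_1=\mu_\eps(U_m)$ gives $\|(\mathcal P_\eps-\widetilde{\P}_{\eps,m})h_\eps\|_{BV}\le\mathrm{const}$, uniformly in $m$; on the other hand $\Delta_{\eps,m}=\mu_\eps(U_m)\ge\underline{h}_\eps\,\l(U_m)$ by {\bf H5}, whence
\[
\eta_m\,\|(\mathcal P_\eps-\widetilde{\P}_{\eps,m})h_\eps\|_{BV}\le\mathrm{const}\cdot\l(U_m)\le\frac{\mathrm{const}}{\underline{h}_\eps}\,\Delta_{\eps,m},
\]
which is {\bf (A4)}. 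Assembling the four items proves the proposition.
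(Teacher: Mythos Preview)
Your outline is correct and follows essentially the paper's route: parts {\bf (A2)}--{\bf (A4)} match the paper almost verbatim (Keller--Liverani perturbation via {\bf H4} for {\bf (A2)}; $\eta_m\le\l(U_m)$ via $\|\psi\|_\infty\le\|\psi\|_{BV}$ for {\bf (A3)}; the Lasota--Yorke bound on $\P_\eps(\I_{U_m}h_\eps)$ together with $\mu_\eps(U_m)\ge\underline h_\eps\,\l(U_m)$ from {\bf H5} for {\bf (A4)}), and for {\bf (A1)} the paper executes your sketch by writing $\widetilde{\P}_{\eps,m}^{\,n}\psi$ as an average over realisations and observing that, since $U_m^c$ is two intervals, the hole adds at most $2(n{+}1)$ jumps on each monotonicity domain, so the variation carries a coefficient $\tfrac{2(2n+3)}{\beta^n}$ in front of $\V(\psi)$, which is $<\kappa^{-n_0}$ for some $\kappa>1$ and large $n_0$. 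The only imprecision is that the accumulated hole contribution is of order $n\beta^{-n}$ rather than a pure geometric series in $\beta^{-1}$; otherwise your plan and the paper's proof coincide.
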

\begin{proof}
 Condition ({\bf A1}) means to prove the Lasota-Yorke inequality  for the operator $\widetilde{\P}_{\eps,m}$. We recall that the constants $A$ and $B$ there must independent of the perturbation parameter which in our case is $m$.  We begin with the total variation.

The structure of $\widetilde{\P}_{\eps,m}$'s iterates is
\begin{equation}\label{FP}
(\widetilde{\P}_{\eps,m}^n \psi)=\int\cdots\int \P_{\omega_n}(\Iuc \P_{\omega_{n-1}} (\Iuc\cdots \P_{\omega_1}(\psi\,\Iuc)))\,\dif\theta_\eps(\omega_1)\cdots \dif\theta_\eps(\omega_n).
\end{equation}

Let us call $A_{l,\omega_j}$ the $l$-domain of injectivity of the map $f_{\omega_j}$ and call $f^{-1}_{l, \omega_j}$ the inverse of $f_{\omega_j}$ restricted to $A_{l,\omega_j}$.  We have:
\begin{multline*}
\Upsilon_{\omega_1,\ldots,\omega_n}:=\P_{\omega_n}(\Iuc \P_{\omega_{n-1}} (\Iuc \cdots\P_{\omega_1}(\psi\,\Iuc)))(x)\\
=\sum_{k_n,\ldots,k_1}\frac{(\psi\cdot \Iuc \cdot \Iuc\circ f_{\omega_1}\cdots\Iuc \circ f_{\omega_{n-1}}\circ\cdots\circ f_{\omega_1})((f^{-1}_{k_1, \omega_1}\circ\cdots\circ f^{-1}_{k_n, \omega_n})(x))}{|D(f_{\omega_n}\circ\cdots\circ f_{\omega_1})((f^{-1}_{k_1, \omega_1}\circ\cdots\circ f^{-1}_{k_n, \omega_n})(x))|}\\ \times\I_{f_{\omega_n}\circ\cdots\circ f_{\omega_1}}\Omega_{\omega_1,\ldots,\omega_n}^{k_1,\ldots,k_n}(x)
\end{multline*}
The sets
\begin{multline*}
\Omega_{\omega_1,\ldots,\omega_n}^{k_1,\ldots,k_n}:=f^{-1}_{k_1, \omega_1}\circ\cdots\circ f^{-1}_{k_{n-1}, \omega_{n-1}}A_{k_n,\omega_n}\cap f^{-1}_{k_1, \omega_1}\circ\cdots\circ f^{-1}_{k_{n-2}, \omega_{n-2}}A_{k_{n-1},\omega_{n-1}}\cap\cdots\\ \cdots\cap f^{-1}_{k_1, \omega_1} A_{k_2,\omega_2}\cap A_{k_1,\omega_1}
\end{multline*}
are intervals and they give a mod-$0$ partition of $I=[0,1]$; moreover the image $H_{\omega_1,\ldots,\omega_n}^{k_1,\ldots,k_n}:=f_{\omega_n}\circ\cdots\circ f_{\omega_1} \Omega_{\omega_1,\ldots,\omega_n}^{k_1,\ldots,k_n}$ for a given $n$-tuple $\{k_n,\ldots,k_1\}$ is a connected interval.
We also note for future purposes that we could equivalently write:
$$
g_n:=\Iuc \cdot \Iuc\circ f_{\omega_1}\cdot\ldots\cdot\Iuc f_{\omega_{n-1}}\circ\cdots\circ f_{\omega_1}= \I_{U_m^c\cap f^{-1}_{\omega_1}U_m^c\cap\cdots\cap f^{-1}_{\omega_1}\circ\cdots\circ f^{-1}_{\omega_{n-1}} U_m^c}
$$
Now we observe that the set
$$U^c_{m}(n):=U_m^c\cap f^{-1}_{\omega_1}U_m^c\cap f^{-1}_{\omega_1}\circ f^{-1}_{\omega_{2}}U_m^c\cap\cdots\cap f^{-1}_{\omega_1}\circ\cdots\circ f^{-1}_{\omega_{n-1}}U_m^c\cap \Omega_{\omega_1,\ldots,\omega_n}^{k_1,\ldots,k_n}$$
is actually given by:
$$
U_m^c(n):=U_m^c\cap f^{-1}_{k_1,\omega_1}U_m^c\cap f^{-1}_{k_1,\omega_1}\circ f^{-1}_{k_{2},\omega_{2}}U_m^c\cap\cdots\cap f^{-1}_{k_1,\omega_1}\circ\cdots\circ f^{-1}_{k_{n-1},\omega_{n-1}}U_m^c\cap \Omega_{\omega_1,\ldots,\omega_n}^{k_1,\ldots,k_n}
$$
Since $U_m^c$ is the disjoint union of two connected intervals, the number of connected intervals in $U_m^c(n)$ is  bounded from above by $n+1$ and it is important that it grows linearly with $n$.  We now take the total variation $\V(\Upsilon_{\omega_1,\ldots,\omega_n})$. We begin to remark that, by standard techniques:
\begin{multline*}
\V\left(\frac{(\psi g_n)((f^{-1}_{k_1, \omega_1}\circ\cdots\circ f^{-1}_{k_n, \omega_n})(x))}{|D(f_{\omega_n}\circ\cdots\circ f_{\omega_1})((f^{-1}_{k_1, \omega_1}\circ\cdots\circ f^{-1}_{k_n, \omega_n})(x))|}\I_{f_{\omega_n}\circ\cdots\circ f_{\omega_1}}\Omega_{\omega_1,\ldots,\omega_n}^{k_1,\ldots,k_n}(x)\right)\\
\leq 2\V_{H_{\omega_1,\ldots,\omega_n}^{k_1,\ldots,k_n}}\left(\frac{(\psi g_n)((f^{-1}_{k_1, \omega_1}\circ\cdots\circ f^{-1}_{k_n, \omega_n})(x))}{|D(f_{\omega_n}\circ\cdots\circ f_{\omega_1})((f^{-1}_{k_1, \omega_1}\circ\cdots\circ f^{-1}_{k_n, \omega_n})(x))|}\right)\\
 +\frac{2}{\beta^n}\frac{1}{\l(\Omega_{\omega_1,\ldots,\omega_n}^{k_1,\ldots,k_n})}\int_{\Omega_{\omega_1,\ldots,\omega_n}^{k_1,\ldots,k_n}}|\psi g_n|\, \dif\l
\end{multline*}
where $\beta$ is given by (\ref{du1}) in {\bf H2}.\\
The variation above can be further estimated by standard techniques:
\begin{multline}\label{V1}
\leq \frac{2}{\beta^n}\V_{\Omega_{\omega_1,\ldots,\omega_n}^{k_1,\ldots,k_n}}(\psi g_n)+ \frac{2}{\beta^n}\frac{1}{\l(\Omega_{\omega_1,\ldots,\omega_n}^{k_1,\ldots,k_n})}\int_{\Omega_{\omega_1,\ldots,\omega_n}^{k_1,\ldots,k_n}}|\psi g_n|\,\dif\l \\ +2\sup_{\zeta, \omega_1,\ldots,\omega_n}\frac{|D^2(f_{\omega_n}\circ\cdots\circ f_{\omega_1})(\zeta)|}{[D(f_{\omega_n}\circ\cdots\circ f_{\omega_1})(\zeta)]^2} \int_{\Omega_{\omega_1,\ldots,\omega_n}^{k_1,\ldots,k_n}}|\psi g_n|\,\dif\l
\end{multline}
We now have:
\begin{align*}
\frac{|D^2(f_{\omega_n}\circ\cdots\circ f_{\omega_1})(\zeta)|}{[D(f_{\omega_n}\circ\cdots\circ f_{\omega_1})(\zeta)]^2}=\sum_{k=0}^{n-1}\frac{ D^2f_{\omega_{n-k}}\left(\prod\limits_{l=1}^{n-1-k}T_{\omega_{n-l}}(\zeta)\right)}{\left[Df_{\omega_{n-k}}\left(\prod\limits_{l=1}^{n-1-k}f_{\omega_{n-l}}(\zeta)\right)\right]^2\prod\limits_{j=0}^{k}Df_{\omega_{n-j+1}}\left(\prod\limits_{l=1}^{n-j}f_{\omega_{n-l}}(\zeta)\right)}
\end{align*}
By (\ref{du2}) in {\bf H2} 
 and using again  (\ref{du1}),
 the previous sum will be bounded by $C_1$ times the sum of a geometric series of reason $\beta^{-1}$: we call $C$ the upper bound thus found.
 Our variation above is therefore bounded by:
\begin{equation}\label{V2}
\mbox{(\ref{V1})}\leq
\frac{2}{\beta^n}\V_{\Omega_{\omega_1,\ldots,\omega_n}^{k_1,\ldots,k_n}}(\psi g_n)+
\frac{2}{\beta^n}\frac{1}{\l(\Omega_{\omega_1,\ldots,\omega_n}^{k_1,\ldots,k_n})}\int_{\Omega_{\omega_1,\ldots,\omega_n}^{k_1,\ldots,k_n}}|\psi g_n|\dif\l
+ 2 C\int_{\Omega_{\omega_1,\ldots,\omega_n}^{k_1,\ldots,k_n}}|\psi g_n|\dif\l
\end{equation}
Now:
\begin{align}
\V_{\Omega_{\omega_1,\ldots,\omega_n}^{k_1,\ldots,k_n}}(\psi g_n)&\leq \V_{\Omega_{\omega_1,\ldots,\omega_n}^{k_1,\ldots,k_n}}(\psi)+2(n+1)
\sup_{\Omega_{\omega_1,\ldots,\omega_n}^{k_1,\ldots,k_n}}|\psi|\nonumber\\
&\leq [2(n+1)+1]\V_{\Omega_{\omega_1,\ldots,\omega_n}^{k_1,\ldots,k_n}}(\psi)+\frac{1}{\l(\Omega_{\omega_1,\ldots,\omega_n}^{k_1,\ldots,k_n})}\int_{\Omega_{\omega_1,\ldots,\omega_n}^{k_1,\ldots,k_n}}|\psi|\,\dif\l\label{V3}
\end{align}
where $2(n+1)$ is an estimate from above of the number of jumps of
$g_n$.
We now observe that for a finite realization of length $n$, $\omega_1,\ldots,\omega_n$, the quantity $\Psi_{n,\omega_1,\ldots,\omega_n}=\displaystyle\inf_{k_1,\ldots,k_n}\l(\Omega_{\omega_1,\ldots,\omega_n}^{k_1,\ldots,k_n})$,
where each $k_j$ runs over the finite branches of $f_{\omega_j}$, is surely strictly positive and this will also implies that  $
\Psi_n^{-1}:=\int \Psi_{n, \omega_1,\ldots,\omega_n}^{-1}\dif\theta_{\eps}^{\N}>0.$
We now replace (\ref{V3}) into (\ref{V2}), we sum over the ${k_1,\ldots,k_n}$ and we integrate w.r.t.\ $\theta_{\eps}^{\N}$; finally we get
$$
\V(\widetilde{\P}_{\eps,m}^n \psi)\leq \frac{2}{\beta^n}(2n+3)\V(\psi)+
[\frac{4}{\beta^n}\frac{1}{\Psi_n}+2C]\int_{I}|\psi|\,\dif\l
$$
In
order to get the Lasota-Yorke inequality one should get a certain
$n_0$ and a number $\beta >\kappa>1$ and such that
\begin{equation}\label{no}
\frac{2}{\beta^{n_0}}(2n_0+3)<\kappa^{-\n_0};
\end{equation}
the Lasota-Yorke inequality \eqref{eq:asmp2} will then follow  with standard arguments. \footnote{ By defining $A=2(2n_0+3)$ and $B=\left[\frac{4}{\Psi_{n_0}}+2C\right]\frac{2}{1-\kappa^{-n_0}}$, we have
$$
\V(\widetilde{\P}_{\eps,m}^n \psi)\leq A \kappa^{-n}\V( \psi)+ B \int_I |\psi|\,\dif\l .$$}

We now compute the $L^1$-norm of our operator. We have to compute $\|\widetilde{\P}_{\eps,m}^n \psi \|_1$; by splitting $\psi$ into the sum of its positive and negative parts and by using the linearity of the transfer operator, we may suppose that $\psi$ is non-negative. This allows us to interchange the integrals w.r.t.\ the Lebesgue measure and $\theta_{\eps}^{\N}$ and to use duality for each of the ${\mathcal P}_{\omega}$. In conclusion we get
$$
\| \widetilde{\P}_{\eps,m}^n \psi \|_1\le \int |\psi|h_{\eps} \I_{U_m^c}(x)\I_{U_m^c}(f_{\omega_1}x)\cdots \I_{U_m^c}(f_{\omega_{n-1}}\circ\cdots\circ f_{\omega_1}x)\, \dif\l\le \|\psi\|_1.
$$
This concludes the proof of the Lasota-Yorke inequality, ({\bf A1}).  We have now to show that 
the operator ${\mathcal P}_{\eps}$, which is the unperturbed operator w.r.t.\ $\widetilde{\P}_{\eps,m}$, verifies the mixing condition ({\bf A2}). Now the  Perron-Frobenius operator ${\mathcal P}$ for the original map $f$, which is in turn the unperturbed operator w.r.t.\ ${\mathcal P}_{\eps}$, is  mixing  ($1$ is the only eigenvalue of finite multiplicity on the unit circle), since our original map $f$ was chosen to be mixing (hypothesis {\bf H1}),  and therefore, by the perturbation theory in \cite{KL09} and  the closeness of the two operators expressed by assumption {\bf H4},  also ${\mathcal P}_{\eps}$ is a mixing operator.  Let us discuss the assumption ({\bf A3}).

Let us bound the following quantity, for any $\psi$ of bounded variation and of total variation less than or equal to $1$:
\begin{multline*}
\left|\int_I (\widetilde{\P}_{\eps,m} \psi(x)-\P_{\eps} \psi(x))\,\dif\l(x)\right|=\left|\int_I \P_\eps(\I_{U_m}\psi)(x)\, \dif\l(x)\right|\\
\leq\left|\int\left(\int_I\P_{\omega}(\I_{U_m}\psi)\, \dif\l\right)\dif\theta_{\eps}(\omega)\right|\le \|\psi\|_{\infty} \ \l(U_m)
\end{multline*}
where $\|\psi\|_{\infty}\le \|\psi\|_{BV}$ and $\l(U_m)$ goes to zero when $m$ goes to infinity.

We now check assumption ({\bf A4}) under the hypothesis {\bf H5}.\\We have:
$$
\|(\widetilde{\P}_{\eps,m} -\P_{\eps})h_{\eps} \|_{BV}=\|\P_{\eps}({\bf 1}_{U_m}h_{\eps})\|_{BV}\le A\kappa^{-1}\|{\bf 1}_{U_m}h_{\eps}\|_{BV}+B\|{\bf 1}_{U_m}h_{\eps}\|_{1}
$$
The right hand side is bounded by a constant $C^*$ which is independent of $m$. We recall that in our case $\Delta_{\eps,m}=\mu_{\eps}(U_m)$ and that
$$\eta_{\eps,m}:=\sup_{\|\psi\|_{BV}\le 1}\left|\int_I (\widetilde{\P}_{\eps,m} \psi(x)-\P_{\eps} \psi(x))\, \dif\l(x)\right|\le \l(U_m)$$ (see computation above).
Then
$$
\|(\widetilde{\P}_{\eps,m} -\P_{\eps})h_{\eps} \|_{BV}\le C^*\frac{\mu_\eps(U_m)}{\underline{h}_{\eps}\l(U_m)}\le C^* \frac{\Delta_{\eps,m}}{\eta_{\eps,m}}
$$
\end{proof}

\subsection{Extremal index}
\label{subsec:q_k}

In this part, we investigate the quantity, see \eqref{eq:qk} and \eqref{eq:qk2}:
$$
q_{k,m}=\frac {\l(( \P_{\eps}-\widetilde{\P}_{\eps,m})\widetilde{\P}_{\eps,m}^k(\P_{\eps}-\widetilde{\P}_{\eps,m})(h_\eps))}{\mu_{\eps}(U_m)}
$$
We recall that $U_m:= U_m(\zeta)$ represents a ball around the point $\zeta$.
Our result is the following.
\begin{proposition}\label{calq}
Let us suppose that $f$ is either a $C^2$ expanding map of the circle or a piecewise expanding map of the circle 
with finite  branches and verifying hypotheses {\bf H1}-{\bf H4}. Then for each $k$,
$$
\lim_{m\rightarrow \infty}q_{k,m}\equiv 0,
$$
\ie the limit in the definition of $q_k$ in \eqref{eq:qk} exists and equals zero. Also the extremal index verifies
$
\vartheta=1-\sum_{k=0}^{\infty}q_k=1
$
and this is independent of the point $\zeta$, the center of the ball $U_m$.
\end{proposition}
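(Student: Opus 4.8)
The plan is to exploit the regularising effect of the absolutely continuous additive noise: under \eqref{eq:noise-distribution} the random Perron--Frobenius operator $\P_{\eps}$ maps $L^1(\l)$ boundedly into $L^\infty(\l)$, and this alone forces $q_{k,m}$ to be of order $\l(U_m)$, hence to vanish as $m\to\infty$.

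First I would rewrite the numerator of $q_{k,m}$. Since $\widetilde{\P}_{\eps,m}\psi=\P_{\eps}(\I_{U_m^c}\psi)$ by \eqref{rpfo}, we have $(\P_{\eps}-\widetilde{\P}_{\eps,m})\psi=\P_{\eps}(\I_{U_m}\psi)$ for every $\psi$. Put $\phi_m:=\I_{U_m}h_{\eps}$, so that $\|\phi_m\|_1=\mu_{\eps}(U_m)$, and use that $\P_{\eps}$ preserves the Lebesgue integral (each $\P_\omega$ does, and $\theta_{\eps}$ is a probability measure); this turns the defining ratio into
\[
q_{k,m}\,\mu_{\eps}(U_m)=\int \I_{U_m}\,\widetilde{\P}_{\eps,m}^{k}\big(\P_{\eps}\phi_m\big)\,\dif\l .
\]

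Next comes the key estimate. For additive noise $\P_\omega g=(\P g)(\,\cdot-\omega)$, so $\P_{\eps}g(x)=\int(\P g)(x-\omega)\,g_{\eps}(\omega)\,\dif\l(\omega)$, and \eqref{eq:noise-distribution} gives $\|\P_{\eps}g\|_\infty\le\overline{g_{\eps}}\,\|\P g\|_1\le\overline{g_{\eps}}\,\|g\|_1$. Combining this with the $L^1$-contractivity of $\psi\mapsto\I_{U_m^c}\psi$ and of $\P_{\eps}$ (hence $\|\widetilde{\P}_{\eps,m}\psi\|_1\le\|\psi\|_1$), and noting that every function occurring is nonnegative because $h_{\eps}\ge0$, a one-line induction on $k$ yields
\[
\big\|\widetilde{\P}_{\eps,m}^{k}(\P_{\eps}\phi_m)\big\|_\infty\le\overline{g_{\eps}}\,\|\phi_m\|_1=\overline{g_{\eps}}\,\mu_{\eps}(U_m)\qquad\text{for all }k\ge0 .
\]
Inserting this in the displayed identity and bounding $\int\I_{U_m}(\cdot)\,\dif\l\le\|\cdot\|_\infty\,\l(U_m)$ gives $0\le q_{k,m}\le\overline{g_{\eps}}\,\l(U_m)\to0$ as $m\to\infty$, since $U_m$ shrinks to $\zeta$. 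Thus the limit $q_k$ in \eqref{eq:qk} exists and equals $0$ for every $k$. As Proposition~\ref{KUA} establishes \textbf{(A1)}--\textbf{(A4)}, formula \eqref{eq:qk2} applies and gives $\vartheta=1-\sum_{k\ge0}q_k=1$; the bound $\overline{g_{\eps}}\,\l(U_m)$ is independent of the centre $\zeta$, so the conclusion holds for every $\zeta$.

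The only real subtlety is the interchange of the sum over $k$ with the limit $m\to\infty$ hidden inside \eqref{eq:qk2}: this is exactly what the uniform Lasota--Yorke inequality from Proposition~\ref{KUA} is there for, since it provides the uniform geometric control of the tail $\sum_{k\ge N}q_{k,m}$ built into the Keller--Liverani theorem, so beyond invoking that theorem nothing further is needed. Note that absolute continuity of the noise is used solely in the $L^1\!\to\!L^\infty$ bound, which is precisely why this proposition, in contrast to the mere existence of $\vartheta$, requires an absolutely continuous distribution.
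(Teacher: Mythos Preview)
Your proof is correct and in fact cleaner than the paper's. Both arrive at the same rewriting
\[
q_{k,m}\,\mu_{\eps}(U_m)=\int \I_{U_m}\,\widetilde{\P}_{\eps,m}^{k}\big(\P_{\eps}(\I_{U_m}h_{\eps})\big)\,\dif\l,
\]
but from there the strategies diverge. The paper unfolds $\widetilde{\P}_{\eps,m}^{k}\P_{\eps}$ into an integral over noise realisations, introduces the ``good'' set $W_{k,m}$ of realisations for which the random orbit of $\zeta$ stays away from $U_m$ (and, in the piecewise case, from the discontinuity points) for $k{+}1$ steps, and bounds $\theta_{\eps}^{\N}(W_{k,m}^c)$ via the additive-noise identity exactly as in the proof of $D'(u_n)$. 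This yields $q_{k,m}\le C\,\sigma^{k+1}|U_m|$, which depends on the expansion bound $\sigma$ and blows up in $k$.

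Your argument bypasses all orbit tracking by observing that the convolution structure of additive noise with bounded density gives $\|\P_{\eps}g\|_{\infty}\le\overline{g_{\eps}}\|g\|_{1}$, and then uses only $L^1$-contractivity of $\widetilde{\P}_{\eps,m}$ to get $\|\widetilde{\P}_{\eps,m}^{k}(\P_{\eps}\phi_m)\|_{\infty}\le\overline{g_{\eps}}\mu_{\eps}(U_m)$ \emph{uniformly in $k$}. This gives the sharper and simpler bound $q_{k,m}\le\overline{g_{\eps}}\,\l(U_m)$, avoids treating the $C^2$ and piecewise cases separately, and makes the role of the absolutely continuous noise completely transparent. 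Your closing remark about interchanging $\sum_k$ and $\lim_m$ is overcautious: in the Keller--Liverani formula \eqref{eq:qk2} the $q_k$ are already the limits, so $q_k=0$ for all $k$ immediately gives $\vartheta=1$; your uniform-in-$k$ bound would in any case make this trivial.
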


\begin{proof}
Let us define $G_{k,m}\equiv \int ( \P_{\eps}-\widetilde{\P}_{\eps,m})\widetilde{\P}_{\eps,m}^k(\P_{\eps}-\widetilde{\P}_{\eps,m})h_\eps \, \dif\l$.

As $(\P_{\eps}-\widetilde{\P}_{\eps,m})\psi=\Pe(\I_{U_m} \psi)$, we may write $G_{k,m}=\int \I_{U_m}(x)\widetilde{\P}_{\eps,m}^k(\Pe-\widetilde{\P}_{\eps,m})h_\eps\, \dif\l.$\\ By using (\ref{FP}) we  get
$$G_{k,m} =\iint \I_{U_m}(f_{\omega_{k+1}}\circ f_{\omega_k}\circ\cdots\circ f_{\omega_1}x)\Iuc(f_{\omega_k}\circ\cdots\circ f_{\omega_1}x)\dots\Iuc(f_{\omega_1}x)\I_{U_m}(x)h_\eps(x)\, \dif\l\, \dif\nm.
$$
In order to simplify the notation let us put $$\psi_{k,U_m,\underline\omega}(x)=\I_{U_m}(f_{\omega_{k+1}}\circ f_{\omega_{k}}\circ\cdots\circ f_{\omega_1}x)\I_{U_m^c}(f_{\omega_{k}}\circ\cdots\circ f_{\omega_1}x)\ldots \I_{U_m^c}(f_{\omega_1}x)\I_{U_m}(x).$$
Now let us prove that $q_{k,m}$ converges to 0. Our approach is very similar to what we did to prove $D'(u_m)$ and we now split the proof according to the regularity of the map.\\

(i) Suppose that $f:\mathcal S^1\to \mathcal S^1$ is a $C^2$, expanding map, \ie there exists $|Df(x)|>\lambda>1$, for all $x\in\S^1$.
First, note that since $S^1$ is compact and $f$ is $C^2$, there exists $\sigma>1$ such that $|Df(x)|\leq \sigma$. Hence the set $U_m$  grows at most at a rate given by $\sigma$, so, for any $\o\in\Omega^\N$ we have $|f_{\o}^j(U_m)|\leq \sigma^j |U_m|$. This implies that
\begin{equation}
\label{eq:fact}
\mbox{if $\dist(f_{\o}^j(\zeta),\zeta)>2\sigma^j|U_m|>|U_m|+\sigma^j|U_m|$ then $f_{\o}^j(U_m)\cap U_m=\emptyset$}.
\end{equation}
Note that, by inequality \eqref{eq:fact}, if for all $j=1,\ldots,k+1$ we have $\dist(f_{\o}^j(\zeta),\zeta)>2\sigma^j|U_m|$, then clearly $\psi_{k,B_m,\o}(x)=0$, for all $x$. We define
\begin{equation}
\label{def:W}
W_{k,m}=\bigcap_{j=1}^{k+1}\big\{\o\in(-\eps,\eps)^\N: \dist(f_{\o}^j(\zeta),\zeta)>2\sigma^j|U_m| \big\}.
\end{equation}
Note that on $W_{k,m}$ we have $\psi_{k,U_m,\o}=0$. We want to compute the measure of $W_{k,m}^c$.

Observe that
$W_{k,m}^c\subset \bigcup_{j=1}^{k+1} \big\{\o:\; f_{\o}^j(\zeta)\in B_{2\sigma^j|U_m|}(\zeta)\big\}.$
Hence, we have
\begin{align*}
\theta_\eps^\N(W_{k,m}^c)&\leq \sum_{j=1}^{
k+1}\int \theta_\eps\Big(\Big\{\omega_j: f\left(f_{\o}^{j-1}(\zeta)\right)+\omega_j\in  B_{2\sigma^j|U_m|}(\zeta) \Big\}\Big)\dif\theta^\N_\eps\\
&\leq \sum_{j=1}^{k+1} \overline{g_\eps}\left|B_{2\sigma^j|U_m|}(\zeta)\right|=\sum_{j=1}^{k+1} \overline{g_\eps} 4\sigma^j|U_m|
\leq 4\overline{g_\eps} |U_m|\frac{\sigma}{\sigma-1}\sigma^{k+1}.
\end{align*}


Using this estimate we obtain:
\begin{align*}
G_{k,m}&=\int_{W_{k,m}}\int\psi_{k,U_m,\underline\omega}(x)h_{\eps}(x)\,\dif\l\,\dif\theta_\eps^\N+\int_{W_{k,m}^c}\int\psi_{k,U_m,\underline\omega}(x)h_{\eps}(x)\,\dif\l\,\dif\theta_\eps^\N\\
&=0+\int_{W_{k,m}^c}\int\psi_{k,U_m,\underline\omega}(x)h_{\eps}(x)\,\dif\l\,\dif\theta_\eps^\N \quad\mbox{ and because $\psi_{k,U_m,\underline\omega}(x)\leq \I_{U_m}(x) $, we have: }\\
&\leq  \int_{W_{k,m}^c}\int \I_{U_m}(x)h_{\eps}(x)\,\dif\l\,\dif\theta_\eps^\N\leq \mu_{\eps}(U_m)\,\theta_\eps^\N(W_{k,m}^c)\\
&\leq \mu_{\eps}(U_m) 4\overline{g_\eps} |U_m|\frac{\sigma}{\sigma-1}\sigma^{k+1}
\end{align*}

Now recall that $q_{k,m}=\frac{G_{k,m}}{\mu_\eps(U_{m})}$. It follows that
\begin{align*}
q_{k,m}&\leq \frac{ \mu_{\eps}(U_m) 4\overline{g_\eps} |U_m|\frac{\sigma}{\sigma-1}\sigma^{k+1}
}{\mu_\eps(U_m)}
\leq  4\overline{g_\eps} |U_m|\frac{\sigma}{\sigma-1}\sigma^{k+1}\xrightarrow[m\to\infty]{}0.
\end{align*}

(ii) Using the same ideas as in the previous section, we can extend this result to the piecewise expanding maps with finite branches. Recall that we need to define some 'safety boxes' in order to use the same arguments as in the continuous case. So, if for all $j=1,\ldots, k+1$ and $i=1,\dots,\ell$, where $\ell$ stands for the number of discontinuity points, we have
\begin{equation}
\label{eq:safetybox1}
\mbox{$\dist(f_{\o}^j(\zeta),\xi_i)>2\sigma^j|U_m|$},
\end{equation}
then the set $U_m$ consists of one connected component at each iteration, and also we have $f_{\o}^j(U_m)\cap U_m=\emptyset$ which means $\psi_{k,U_m,\o}(x)=0$, for all $x$. Now let us define
\begin{equation}
\label{def:W2}
W_{k,m}=\bigcap_{j=1}^{k+1}\bigcap_{i=0}^{\ell}\big\{\o\in(-\eps,\eps)^\N: \dist(f_{\o}^j(\zeta),\xi_i)>2\sigma^j|U_m| \big\}.
\end{equation}

Observe that in this case
$W_{k,m}^c\subset \bigcup_{j=1}^{k+1}\bigcup_{i=0}^{\ell} \big\{\o:\; f_{\o}^j(\zeta)\in B_{2\sigma^j|U_m|}(\xi_i)\big\}.$
Hence, we have
\begin{align*}
\theta_\eps^\N(W_{k,m}^c)&\leq \sum_{i=0}^{\ell}\sum_{j=1}^{
k+1}\int \theta_\eps\Big(\Big\{\omega_j: f\left(f_{\o}^{j-1}(\zeta)\right)+\omega_j\in  B_{2\sigma^j|U_m|}(\xi_i) \Big\}\Big)\dif\theta^\N_\eps\\
&\leq \sum_{i=0}^{\ell}\sum_{j=1}^{k+1} \overline{g_\eps}\left|B_{2\sigma^j|U_m|}(\xi_i)\right|=\sum_{i=0}^{\ell}\sum_{j=1}^{k+1} \overline{g_\eps} 4\sigma^j|U_m|
\leq 4(\ell+1)\overline{g_\eps} |U_m|\frac{\sigma}{\sigma-1}\sigma^{k+1}
\end{align*}


Using this estimate we obtain:
\begin{align*}
G_{k,m}&=\int_{W_{k,m}}\int\psi_{k,U_m,\underline\omega}(x)h_{\eps}(x)\,\dif\l\,\dif\theta_\eps^\N+\int_{W_{k,m}^c}\int\psi_{k,U_m,\underline\omega}(x)h_{\eps}(x)\,\dif\l\,\dif\theta_\eps^\N\\
&=0+\int_{W_{k,m}^c}\int\psi_{k,U_m,\underline\omega}(x)h_{\eps}(x)\,\dif\l\,\dif\theta_\eps^\N \quad\mbox{ and because $\psi_{k,U_m,\underline\omega}(x)\leq \I_{U_m}(x) $, we have: }\\
&\leq  \int_{W_{k,m}^c}\int \I_{U_m}(x)h_{\eps}(x)\,\dif\l\,\dif\theta_\eps^\N\leq \mu_{\eps}(U_m)\theta_\eps^\N(W_{k,m}^c)
\leq \mu_{\eps}(U_m) 4(\ell+1)\overline{g_\eps} |U_m|\frac{\sigma}{\sigma-1}\sigma^{k+1}
\end{align*}

Since $q_{k,m}=\frac{G_{k,m}}{\mu_\eps(U_{m})}$, we get
\begin{align*}
q_{k,m}&\leq \frac{ \mu_{\eps}(U_m) 4(\ell+1)\overline{g_\eps} |U_m|\frac{\sigma}{\sigma-1}\sigma^{k+1}
}{\mu_\eps(U_m)}
\leq  4(\ell+1)\overline{g_\eps} |U_m|\frac{\sigma}{\sigma-1}\sigma^{k+1}\xrightarrow[m\to\infty]{}0.
\end{align*}
\end{proof}

\begin{remark}
\label{prop:D'=>q_k}
Let us note that $D'(u_m)$ implies that all $q_k$'s are well defined and equal to 0. Assume that there exists $k\in\N$ and a subsequence $(m_i)_{i\in\N}$ such that $\lim_{j\to\infty} \frac{G_{k,{m_j}}}{\mu_\eps(U_{m_j})}=\alpha>0$. Let us prove that $D'(u_m)$ does not hold in this situation. Recall that if $D'(u_m)$ holds then $$\lim_{m\rightarrow\infty}\,m\sum_{j=1}^{\lfloor m/k_m \rfloor}\mu_\eps\times\theta_\eps^\N( X_0>u_m,X_j>u_m)=0,$$
where $k_m$ (which should not be confused with $k$, here) is a sequence diverging to $\infty$ but slower than $m$, which implies that $\lfloor m/k_m \rfloor\to\infty$, as $m\to\infty$. Hence, let $M_0$ be sufficiently large so that for all $m>M_0$ we have $\lfloor m/k_m \rfloor>k$. Hence, for $i$ sufficiently large so that $m_i>M_0$, we may write
\begin{align*}
m_i\sum_{j=1}^{\lfloor m_i/k_{m_i} \rfloor}\mu_\eps\times\theta_\eps^\N( X_0>u_{m_i},X_j>u_{m_i})&\geq {m_i}\, \mu_\eps\times\theta_\eps^\N( X_0>u_{m_i},X_{k+1}>u_{m_i})\\
&\geq {m_i}\,G_{k,m_i}\sim  \frac{\tau\,G_{k,m_i}}{\mu_\eps(U_{m_i})}\to \tau \alpha>0,\mbox{ as $i\to\infty$},
\end{align*}
since $B_m$ is such that $m \mu_\eps(U_m)\to\tau$, as $m\to\infty$. This implies that $D'(u_m)$ does not hold.
\end{remark}

\section*{Acknowledgements}
We wish to thank Ian Melbourne for helpful comments and, in particular, for calling us the attention for the paper \cite{D98} in this context. We are grateful to  Wael Bahsoun and Manuel Stadlbauer
for fruitful conversations and comments. We are also grateful to Mike Todd for careful reading and many suggestions in order to improve the paper. 

\appendix
\section{Clustering and periodicity}
\label{sec:periodicity}

Condition $D'(u_n)$ prevents the existence of clusters of exceedances, which implies  that the EVL is  standard exponential $\bar H(\tau)=\e^{-\tau}$. However, when $D'(u_n)$ fails, clustering of exceedances is responsible for the appearance of a parameter $0<\vartheta<1$ in the EVL, called the EI, which implies that, in this case, $\bar H(\tau)=\e^{-\vartheta \tau}$. In \cite{FFT12},  the authors established a connection between the existence of an EI less than 1 and periodic behaviour. This was later generalised for REPP in \cite{FFT12a}. Namely, this phenomenon of clustering appeared when $\zeta$ is a repelling periodic point. We assume that the invariant measure $\p$ and the observable $\varphi$ are sufficiently regular so that besides \ref{item:U-ball}, we also have that

\begin{enumerate}
\item[\namedlabel{item:repeller}{(R2)}]  If $\zeta\in \X$ is a repelling periodic point, of prime period\footnote{i.e.,  the \emph{smallest} $n\in \N$ such that $f^n(\zeta)=\zeta$.  Clearly $f^{ip}(\zeta)=\zeta$ for any $i\in \N$. }  $p\in\N$, then we have that the periodicity of $\zeta$ implies that for all large $u$, $\{X_0>u\}\cap f^{-p}(\{X_0>u\})\neq\emptyset$ and the fact that the prime period is $p$ implies that $\{X_0>u\}\cap f^{-j}(\{X_0>u\})=\emptyset$ for all $j=1,\ldots,p-1$. Moreover,
 the fact that $\zeta$ is repelling means that we have backward contraction which means that there exists $0<\vartheta<1$ so that $\bigcap_{j=0}^i f^{-jp}(X_0>u)$ corresponds to another ball of smaller radius around $\zeta$ with $\p\left(\bigcap_{j=0}^i f^{-jp}(X_0>u)\right)\sim(1-\vartheta)^i\p(X_0>u),$ for all $u$ sufficiently large. 
 
\end{enumerate}

The main obstacle when dealing with periodic points is that they create plenty of dependence in the short range. In particular, using \ref{item:repeller} we have that for all $u$ sufficiently large
$
\p(\{X_0>u\}\cap \{X_p>u\})\sim(1-\vartheta)\p(X_0>u)
$
which implies that $D'(u_n)$ is not satisfied, since for the levels $u_n$ as in \eqref{eq:un} it follows that
$
n\sum_{j=1}^{[n/k_n]}\p(X_0>u_n, X_j>u_n)\geq n\p(X_0>u_n, X_p>u_n)\xrightarrow[n\to\infty]{}(1-\vartheta)\tau.
$
To overcome this difficulty around periodic points we make a key observation that roughly speaking tells us that around periodic points one just needs to replace the topological ball $\{X_0>u_n\}$ by the topological annulus \begin{equation}
\label{eq:def-Qp}
Q_p(u):=\{X_0>u, \;X_p\leq u\}.
\end{equation}  Then much of the analysis works out as in the absence of clustering. 
Note that $Q_p(u)$ is obtained by removing from $U(u)$ the points that were doomed to return after $p$ steps, which form the smaller ball $U(u)\cap f^{-p}(U(u))$. Then, the  crucial observation  is that  the limit law corresponding to no entrances up to time $n$ into the ball $U(u_n)$ is equal to the limit law corresponding to no entrances into the annulus $Q_p(u_n)$ up to time $n$.

In what follows for every $A\in\mathcal B$, we denote the complement of $A$ as $A^c:=\mathcal X\setminus A$.
For $s\leq\ell\in \N_0$, we define
\begin{equation}
\label{eq:no-entrance-annulus}
 \QQ_{p,s,\ell}(u)=\bigcap_{i=s}^{s+\ell-1} f^{-i}(Q_p(u))^c,
 \end{equation}
which corresponds to no entrances in the annulus from time $s$ to $s+\ell-1$. Sometimes to abbreviate we also write: $\QQ_\ell(u):=\QQ_{p,0,\ell}(u)$.

\begin{theorem}[{\cite[Proposition~1]{FFT12}}]
\label{thm:ball-annulus} Let $X_0, X_1,,\ldots$ be a stochastic process defined by \eqref{eq:def-stat-stoch-proc-DS} where $\varphi$ achieves a global maximum at a repelling periodic point $\zeta\in \X$, of prime period $p\in\N$, so that conditions  \ref{item:U-ball} and \ref{item:repeller} above hold. Let $(u_n)_n$ be a sequence of levels such that \eqref{eq:un} holds. Then,
$
\lim_{n\to\infty}\p(M_n\leq u_n)=\lim_{n\to\infty}\p(\mathscr Q_n(u_n)).
$   
\end{theorem}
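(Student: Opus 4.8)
The plan is to show that the difference between $\p(M_n\le u_n)$ and $\p(\mathscr Q_n(u_n))$ tends to $0$ as $n\to\infty$. Observe first that $\{M_n\le u_n\}=\bigcap_{i=0}^{n-1}f^{-i}(U(u_n)^c)$, while $\{\mathscr Q_n(u_n)\}=\bigcap_{i=0}^{n-1}f^{-i}(Q_p(u_n)^c)$; since $Q_p(u_n)\subset U(u_n)$ we have $U(u_n)^c\subset Q_p(u_n)^c$, so $\{M_n\le u_n\}\subset\{\mathscr Q_n(u_n)\}$ and hence $\p(\mathscr Q_n(u_n))-\p(M_n\le u_n)\ge 0$. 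Thus it suffices to bound this nonnegative difference from above. The set-theoretic identity driving the argument is $Q_p(u_n)^c=U(u_n)^c\cup\bigl(U(u_n)\cap f^{-p}(U(u_n))\bigr)$, so that an orbit avoiding all the annuli $Q_p(u_n)$ but hitting some ball $U(u_n)$ must, at each such hit at time $i\le n-1$, also satisfy $f^{i+p}$ landing in $U(u_n)$; iterating, a point in $\{\mathscr Q_n(u_n)\}\setminus\{M_n\le u_n\}$ has the property that if it first enters $U(u_n)$ at some time $i<n$, then it stays in $U(u_n)$ along the arithmetic progression $i,i+p,i+2p,\dots$ as long as these indices are $<n$ (and in fact, by \ref{item:repeller}, until the orbit would have to leave, but the point is it does not leave before time $n$).

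Next I would quantify this. Decomposing according to the first entrance time $i$ into $U(u_n)$, one gets
\[
\p(\mathscr Q_n(u_n))-\p(M_n\le u_n)\le\sum_{i=0}^{n-1}\p\Bigl(\{M_i\le u_n\}\cap\bigcap_{j\ge 0:\,i+jp<n}f^{-(i+jp)}(U(u_n))\Bigr).
\]
For each $i$, the event in the sum is contained in $f^{-i}\bigl(\bigcap_{j=0}^{m_i}f^{-jp}(U(u_n))\bigr)$ where $m_i=\lfloor(n-1-i)/p\rfloor$, and by invariance of $\p$ and condition \ref{item:repeller} its measure is $\lesssim (1-\vartheta)^{m_i}\p(U(u_n))$. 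Summing the geometric-type series over $i$ (grouping $i$'s with the same value of $m_i$ contributes a factor $p$) yields a bound of order $p\,\p(U(u_n))\sum_{m\ge 0}(1-\vartheta)^m=\frac{p}{\vartheta}\,\p(U(u_n))$, which tends to $0$ since $\p(U(u_n))=\p(X_0>u_n)\sim\tau/n\to 0$ by \eqref{eq:un}. This gives $\lim_{n\to\infty}\bigl(\p(\mathscr Q_n(u_n))-\p(M_n\le u_n)\bigr)=0$, which is the assertion.

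The main obstacle, and the place where care is needed, is making rigorous the claim that avoiding the annuli $Q_p$ forces an orbit that has entered $U(u_n)$ to remain trapped in the nested balls $\bigcap_{j=0}^{m}f^{-jp}(U(u_n))$, together with controlling the measures of these nested balls uniformly in $n$ via \ref{item:repeller}. One has to be slightly attentive at the "boundary" of the time window — the last progression $i,i+p,\dots$ may be cut off before the orbit would naturally exit — but since we only need an upper bound, extending the intersection or truncating it only helps. A secondary technical point is that \ref{item:repeller} gives the asymptotic $\p(\bigcap_{j=0}^m f^{-jp}(U(u))) \sim (1-\vartheta)^m\p(U(u))$ for $u$ large; one must check this comparison is uniform enough (for all $u$ sufficiently large, with an absolute multiplicative constant) so that the summation over $m$ and over $i$ produces the clean $O(\p(U(u_n)))$ bound. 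All of this is routine once the set-theoretic containment is set up correctly, so I expect the combinatorial bookkeeping of the first-entrance decomposition to be the only genuinely delicate step.
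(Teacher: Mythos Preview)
Your proposal is correct and follows precisely the approach of \cite[Proposition~1]{FFT12}, which the present paper cites without reproducing the argument. The containment $\{M_n\le u_n\}\subset\mathscr Q_n(u_n)$, the first-entrance decomposition, and the observation that a point in the difference must lie in $\bigcap_{j=0}^{m_i}f^{-jp}(U(u_n))$ for $m_i=\lfloor(n-1-i)/p\rfloor$ (in fact one can take $m_i+1$, which only helps), followed by the geometric sum via \ref{item:repeller}, is exactly the standard route. Your caveat about uniformity in \ref{item:repeller} is the one genuine technical point: as written, \ref{item:repeller} gives an asymptotic $\sim(1-\vartheta)^i$, and to sum over $i$ you need a bound $\p\bigl(\bigcap_{j=0}^i f^{-jp}(U(u))\bigr)\le C(1-\vartheta')^i\p(U(u))$ with constants $C$ and $\vartheta'\in(0,1)$ uniform in both $i$ and $u$ large; in the applications of the cited paper this follows from bounded distortion, and one may as well take it as part of the hypothesis.
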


Hence, the idea to cope with clustering caused by periodic points is to adapt conditions $D_2(u_n)$ and $D'(u_n)$, letting annuli replace balls. 
%
%

\begin{condition}[$D^p(u_n)$]\label{cond:Dp}We say that $D^p(u_n)$ holds
for the sequence $X_0,X_1,X_2,\ldots$ if for any integers $\ell,t$
and $n$
\( \left|\p\left(Q_{p,0}(u_n)\cap
  \QQ_{p,t,\ell}(u_n)\right)-\p(Q_{p,0}(u_n))
  \p(\QQ_{p,0,\ell}(u_n))\right|\leq \gamma(n,t),
\)
where $\gamma(n,t)$ is non increasing in $t$ for each $n$ and
$n\gamma(n,t_n)\to0$ as $n\rightarrow\infty$ for some sequence
$t_n=o(n)$.  
\end{condition}
As with $D_2(u_n)$, the main advantage of this condition when compared to Leadbetter's $D(u_n)$ (or others of the same sort) is that it follows directly from sufficiently fast decay of correlations as observed in \cite[Section~5.1]{F12}, on the contrary to $D(u_n)$. 

Assuming $D^p(u_n)$ holds let $(k_n)_{n\in\N}$ be a sequence of integers such that \eqref{eq:kn-sequence-1} holds.
\begin{condition}[$D'_p(u_n)$]\label{cond:D'p} We say that $D'_p(u_n)$
holds for $X_0,X_1,X_2,\ldots$ if there exists a sequence $(k_n)_{n\in\N}$ satisfying \eqref{eq:kn-sequence-1} and such that
$\lim_{n\rightarrow\infty}\,n\sum_{j=1}^{[n/k_n]}\p( Q_{p,0}(u_n)\cap
Q_{p,j}(u_n))=0.$
\end{condition}

One of the main results in \cite{FFT12} is:
\begin{theorem}[{\cite[Theorem~1]{FFT12}}]
  \label{thm:existence-EI}
  Let $(u_n)_{n\in\N}$ be such that \eqref{eq:un} holds.
  Consider a stationary stochastic process $X_0, X_1,\ldots$ be a stochastic process defined by \eqref{eq:def-stat-stoch-proc-DS} where $\varphi$ achieves a global maximum at a repelling periodic point $\zeta\in \X$, of prime period $p\in\N$, so that conditions  \ref{item:U-ball} and \ref{item:repeller} above hold.
  Assume further that conditions
  $D^p(u_n)$ and $D'_p(u_n)$ hold. Then
   $\lim_{n\to\infty}\p(M_n\leq u_n)=\lim_{n\to\infty}\p(\QQ_{p,0,n}(u_n))=\e^{-\vartheta \tau}$.
\end{theorem}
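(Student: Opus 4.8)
\textbf{Proof proposal for Theorem~\ref{thm:existence-EI}.}

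The plan is to mimic the classical Leadbetter-type blocking argument, but with the annulus $Q_p(u_n)$ playing the role of the ball $\{X_0>u_n\}$. By Theorem~\ref{thm:ball-annulus} we already know that $\lim_{n\to\infty}\p(M_n\le u_n)=\lim_{n\to\infty}\p(\QQ_{p,0,n}(u_n))$, so it suffices to show $\p(\QQ_{p,0,n}(u_n))\to \e^{-\vartheta\tau}$. The first step is to record the asymptotics of $\p(Q_p(u_n))$: using \ref{item:repeller}, $\p(Q_p(u_n))=\p(X_0>u_n)-\p(\{X_0>u_n\}\cap f^{-p}(\{X_0>u_n\}))\sim \vartheta\,\p(X_0>u_n)$, hence $n\,\p(Q_p(u_n))\to\vartheta\tau$ by \eqref{eq:un}. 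So the annulus $Q_p(u_n)$ behaves like a target set with the ``$\tau$'' replaced by ``$\vartheta\tau$''.

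Next I would set up the block decomposition. Fix the sequence $(k_n)$ from \eqref{eq:kn-sequence-1} and split $\{0,1,\dots,n-1\}$ into $k_n$ blocks of length roughly $n/k_n$, each followed by a gap of length $t_n=o(n)$ to be discarded; since $k_n t_n=o(n)$ the discarded part is negligible. Using stationarity and condition $D^p(u_n)$ iteratively (the standard telescoping over the $k_n$ blocks, each application costing $\gamma(n,t_n)$ and $k_n\gamma(n,t_n)\to 0$), one reduces $\p(\QQ_{p,0,n}(u_n))$ to $\big(\p(\QQ_{p,0,r_n}(u_n))\big)^{k_n}+o(1)$ where $r_n\sim n/k_n$. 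Then one shows $\p(\QQ_{p,0,r_n}(u_n))=1-\p\big(\bigcup_{j=0}^{r_n-1}f^{-j}(Q_p(u_n))\big)$ and estimates this union: by stationarity and Bonferroni,
\[
r_n\,\p(Q_p(u_n)) - r_n\sum_{j=1}^{r_n-1}\p\big(Q_{p,0}(u_n)\cap Q_{p,j}(u_n)\big)\le \p\Big(\bigcup_{j=0}^{r_n-1}f^{-j}(Q_p(u_n))\Big)\le r_n\,\p(Q_p(u_n)).
\]
Condition $D'_p(u_n)$ forces the double-sum correction to vanish after multiplying by $k_n$ (since $k_n r_n\sum_{j=1}^{r_n-1}\p(Q_{p,0}(u_n)\cap Q_{p,j}(u_n))\le n\sum_{j=1}^{\lfloor n/k_n\rfloor}\p(Q_{p,0}(u_n)\cap Q_{p,j}(u_n))\to 0$), so $\p(\QQ_{p,0,r_n}(u_n))=1-(1+o(1))\,r_n\,\p(Q_p(u_n))$. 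Raising to the power $k_n$ and using $k_n r_n\,\p(Q_p(u_n))\to n\,\p(Q_p(u_n))\to\vartheta\tau$ gives $\big(1-\vartheta\tau/k_n+o(1/k_n)\big)^{k_n}\to\e^{-\vartheta\tau}$, which is the claim.

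The main obstacle is the passage from $\p(\QQ_{p,0,n}(u_n))$ to the product form over blocks, i.e.\ the rigorous telescoping using $D^p(u_n)$: one must verify that replacing ``no entrance into $Q_p$'' events on successive blocks by independent copies incurs only a total error of order $k_n\gamma(n,t_n)\to 0$, and that trimming the gaps of length $t_n$ (which could in principle create entrances into $Q_p$) contributes at most $k_n t_n\,\p(Q_p(u_n))=o(1)$ to the error. Everything here is modelled on \cite[Theorem~1]{FFT12}; the only genuinely new ingredient relative to the $D_2/D'$ case is bookkeeping the factor $\vartheta$ coming from $\p(Q_p(u_n))\sim\vartheta\,\p(X_0>u_n)$, and checking that $D'_p(u_n)$ is exactly the condition needed to kill the second-order term in the Bonferroni bound. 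I would also remark that the value $\vartheta$ of the extremal index is intrinsic to the periodic point via \ref{item:repeller} and does not depend on the choice of $(u_n)$ or $(k_n)$.
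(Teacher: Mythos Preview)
The paper does not supply its own proof of this theorem: it is quoted verbatim from \cite[Theorem~1]{FFT12} in Appendix~\ref{sec:periodicity} and used as a black box. Your proposal is a faithful reconstruction of the blocking argument carried out in \cite{FFT12} (which in turn adapts \cite[Theorem~1]{FF08a} by replacing balls with the annuli $Q_p(u_n)$), and all the essential steps---the reduction via Theorem~\ref{thm:ball-annulus}, the asymptotics $n\p(Q_p(u_n))\to\vartheta\tau$ from \ref{item:repeller}, the $k_n$-block telescoping controlled by $D^p(u_n)$, the gap-trimming at cost $k_nt_n\p(Q_p(u_n))=o(1)$, and the Bonferroni estimate killed by $D'_p(u_n)$---are correctly identified.
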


Regarding the convergence of the REPP, when there is clustering, one cannot use the aforementioned criterion of Kallenberg  because the point processes are not simple anymore and possess multiple events. This means that a much deeper analysis must be done in order to obtain convergence of the REPP. We carried this out in \cite{FFT12a} and we describe below the main results and conditions needed.  
First, we define the sequence $\left(U^{(\kappa)}(u)\right)_{\kappa\geq0}$ of nested balls centred at $\zeta$ given by: 
\begin{equation}
\label{eq:Uk-definition}
U^{(0)}(u)=U(u)  
 \quad\text{and}\quad U^{(\kappa)}(u)=f^{-p}(U^{(\kappa-1)}(u))\cap U(u), \quad\text{for all $\kappa\in\N$.} 
 \end{equation} 
For $i,\kappa,\ell,s\in\N\cup\{0\}$, we define the following events:
\begin{equation}\label{eq:Q-definition}
Q_{p,i}^\kappa(u):=f^{-i}\left(U^{(\kappa)}(u)-U^{(\kappa+1)}(u)\right).
\end{equation}

Observe that for each $\kappa$, the set $Q_{p,0}^\kappa(u)$ corresponds to an annulus centred at $\zeta$. Besides, 
$U(u)=\bigcup_{\kappa=0}^\infty Q_{p,0}^\kappa(u),$
which means that the ball centred at $\zeta$ which corresponds to $U(u)$ can be decomposed into a sequence of disjoint annuli where $Q_{p,0}^0(u)$ is the most outward ring and the inner ring $Q_{p,0}^{\kappa+1}(u)$ is sent outward by $f^p$ to the ring $Q_{p,0}^\kappa(u)$, i.e., 
$f^{p}(Q_{p,0}^{\kappa+1}(u))=Q_{p,0}^\kappa(u).$

We are now ready to state:\begin{condition}[$D_p(u_n)^*$]\label{cond:Dp*}We say that $D_p(u_n)^*$ holds
for the sequence $X_0,X_1,X_2,\ldots$ if for any integers $t, \kappa_1,\ldots,\kappa_\varsigma$, $n$ and
 any $J=\cup_{i=2}^\varsigma I_j\in \mathcal R$ with $\inf\{x:x\in J\}\ge t$, 
 \[ \left|\p\left(Q_{p,0}^{\kappa_1}(u_n)\cap \left(\cap_{j=2}^\varsigma \nn_{u_n}(I_j)=\kappa_j \right) \right)-\p\left(Q_{p,0}^{\kappa_1}(u_n)\right)
  \p\left(\cap_{j=2}^\varsigma \nn_{u_n}(I_j)=\kappa_j \right)\right|\leq \gamma(n,t),
\]
where for each $n$ we have that $\gamma(n,t)$ is nonincreasing in $t$  and
$n\gamma(n,t_n)\to0$  as $n\rightarrow\infty$, for some sequence
$t_n=o(n)$. 
\end{condition}
This mixing condition is stronger than $D^p(u_n)$ because it requires a uniform bound for all $\kappa_1$, nonetheless, it still is much weaker than the original $D(u_n)$ from Leadbetter \cite{L73} or any of the kind. As all the other preceding conditions ($D_2,D_3,D^p$) it can be easily verified for systems with sufficiently fast decay of correlations (see \cite[Section~5.1]{F12}).     

In \cite{FFT12a}, for technical reasons only, we also introduced a slight modification to $D'_p(u_n)$. The new condition was denoted by $D'_p(u_n)^*$ and the difference is that we require that 
$\lim_{n\rightarrow\infty}\,n\sum_{j=1}^{[n/k_n]}\p( Q_{p,0}(u_n)\cap
\{X_j>u_n\})=0$ holds.

We can now state the main theorem in \cite{FFT12a}. 
\begin{theorem}[{ \cite[Theorem~1]{FFT12a}}]
\label{thm:convergence-point-process} Let $X_0, X_1, \ldots$ be given by \eqref{eq:def-stat-stoch-proc-DS}, where $\varphi$ achieves a global maximum at the repelling periodic point $\zeta$, of prime period $p$, and conditions \ref{item:U-ball} 
 and \ref{item:repeller} hold. 
 Let
$(u_n)_{n\in\N}$ be a sequence satisfying \eqref{eq:un}.  
Assume that conditions 
$D_p(u_n)^*$, $D'_p(u_n)^*$ hold.
Then the REPP $N_n$ converges in distribution to a compound Poisson process $N$ with intensity $\vartheta$ and multiplicity d.f. $\pi$ given by
$
\pi(\kappa)=\vartheta(1-\vartheta)^\kappa,
$
for every $\kappa\in\N_0$, where the extremal index $\vartheta$ is given by the expansion rate at $\zeta$ stated in \ref{item:repeller}.
\end{theorem}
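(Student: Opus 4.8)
The plan is to prove convergence in distribution of the point processes $N_n$ to the compound Poisson process $N$ by establishing convergence of finite–dimensional distributions over a dissecting family of intervals, following the blocking scheme of Hsing--Hüsler--Leadbetter adapted to the annular decomposition $U(u_n)=\bigsqcup_{\kappa\ge0}Q_{p,0}^\kappa(u_n)$ recalled in \eqref{eq:Uk-definition}--\eqref{eq:Q-definition}. Since the limiting process is not simple, Kallenberg's criterion does not apply directly, so I would use the criterion that $N_n\xrightarrow{d}N$ as soon as, for every finite family of pairwise disjoint intervals $I_1=[a_1,b_1),\dots,I_k=[a_k,b_k)$ and all $m_1,\dots,m_k\in\N_0$,
\[
\p\bigl(N_n(I_1)=m_1,\dots,N_n(I_k)=m_k\bigr)\xrightarrow[n\to\infty]{}\prod_{j=1}^k\p\bigl(N(I_j)=m_j\bigr),
\]
with $N(I_j)$ a Pólya--Aeppli variable of parameters $\vartheta$ and $\vartheta(b_j-a_j)$ as in Remark~\ref{rem:compound-poisson}; escape of mass is precluded because $\mathbb E[N_n([0,t))]=\lfloor tv_n\rfloor\,\p(X_0>u_n)\to t$.

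First I would decouple blocks. Partition each rescaled interval $v_nI_j$ into $\approx k_n$ sub-blocks of length $\approx v_n(b_j-a_j)/k_n$, separated by buffer gaps of length $t_n=o(n/k_n)$. Condition $D_p(u_n)^*$, applied with the events $Q_{p,0}^{\kappa_1}(u_n)$ against counting configurations in the remaining sub-blocks, is exactly what is needed to factorise the joint law across all sub-blocks of all the $I_j$ simultaneously, with total error $\lesssim k_n\gamma(n,t_n)\to0$. This reduces the problem to analysing a single sub-block, and is the routine part once $D_p(u_n)^*$ is granted.

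The core is the single-block analysis together with the identification of the cluster-size law. Within one block, condition $D'_p(u_n)^*$ guarantees that with probability $1-o(1)$, uniformly over the $k_n$ blocks, there is at most one \emph{fresh} entrance (a time $i$ at which $Q_{p,0}(u_n)=\{X_i>u_n,X_{i+p}\le u_n\}$ occurs with no contributing exceedance outside its period-$p$ chain): indeed $n\sum_{j=1}^{[n/k_n]}\p(Q_{p,0}(u_n)\cap\{X_j>u_n\})\to0$ rules out two such incompatible exceedances in one block. Moreover a truncation argument using $\p(U^{(K)}(u_n))\sim(1-\vartheta)^K\p(U(u_n))$ (from \ref{item:repeller}) shows cluster chains have temporal length $\le Kp$ with overwhelming probability for $K$ large, so clusters do not straddle block boundaries. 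Hence, up to a negligible event, $N_n$ on a block equals the indicator of a cluster there times the cluster size; since $n\,\p(Q_{p,0}(u_n))\to\vartheta\tau$, the number of clustered blocks inside $v_nI_j$ is asymptotically Binomial, hence Poisson with mean $\vartheta(b_j-a_j)$. The conditional cluster-size law comes from the annular decomposition: by \ref{item:repeller},
\[
\frac{\p\bigl(Q_{p,0}^\kappa(u_n)\bigr)}{\p(U(u_n))}=\frac{\p\bigl(U^{(\kappa)}(u_n)\bigr)-\p\bigl(U^{(\kappa+1)}(u_n)\bigr)}{\p(U(u_n))}\xrightarrow[n\to\infty]{}(1-\vartheta)^\kappa-(1-\vartheta)^{\kappa+1}=\vartheta(1-\vartheta)^\kappa ,
\]
and a fresh entrance into $Q_{p,0}^\kappa(u_n)$ produces exactly $\kappa+1$ exceedances, at times $i,i+p,\dots,i+\kappa p$, which all collapse to the same rescaled instant because $p/v_n\to0$; so the cluster size is $1$ plus a geometric$(\vartheta)$ variable on $\N_0$, i.e. the multiplicity distribution $\pi$ of Remark~\ref{rem:compound-poisson}. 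Invoking $D_p(u_n)^*$ once more to decouple the depth-$\kappa$ cluster event in a given block from later blocks and later intervals, one assembles a Binomial sum over independent blocks converging to $\sum_{i=1}^{\mathrm{Pois}(\vartheta(b_j-a_j))}D_i=N(I_j)$, with independence across $j$.

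The step I expect to be the main obstacle is the clean bookkeeping in the single-block analysis: showing rigorously that, after discarding an event of probability $o(1)$, the exceedance count in a block genuinely factors as a cluster-occurrence indicator times an asymptotically geometric cluster size, and that this factorisation is compatible with the independence produced by $D_p(u_n)^*$ — the delicate point being that the natural cluster events are the annuli $Q_{p,0}^\kappa(u_n)$, whose joint behaviour with future counting configurations must be controlled uniformly in $\kappa$ (this is precisely why $D^p(u_n)$ must be strengthened to $D_p(u_n)^*$), together with the truncation estimate ensuring tightness of cluster sizes so that block-boundary effects are negligible. The decoupling via $D_p(u_n)^*$ and the annulus-measure computation via \ref{item:repeller} are, by comparison, mechanical.
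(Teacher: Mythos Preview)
Your proposal is correct and follows essentially the same strategy that the paper sketches in Appendix~\ref{sec:EI-multiplicity-formulas}: block into $k_n$ sub-blocks separated by gaps of length $t_n$, use $D_p(u_n)^*$ to decouple the blocks, use $D'_p(u_n)^*$ to reduce to at most one cluster per block, and read off the multiplicity law from the annular decomposition via Lemma~\ref{lem:k-ring} and formula~\eqref{eq:multiplicity}. Note that the paper does not actually contain a full proof of this theorem --- it is quoted verbatim from \cite{FFT12a} --- so the only basis for comparison is the informal sketch around \eqref{eq:multiplicity}, which your outline matches.
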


\section{Computing the EI and the multiplicity distribution}
\label{sec:EI-multiplicity-formulas}

In order to prove the existence of an EI around a repelling periodic point, we may use Theorem~\ref{thm:existence-EI} and, basically, observe that, once conditions $D^p(u_n)$ and $D'_p(u_n)$ are verified, by \ref{item:repeller} the EI may be computed from the formula:
\begin{equation}
\label{eq:EI-formula}
\vartheta=\lim_{n\to\infty}\frac{\p(Q_{p,0}(u_n))}{\p(U_n)}.
\end{equation}
In order to compute the multiplicity distribution of the limiting compound Poisson process for the REPP, when $\zeta$ is a repelling periodic point, we can use the following estimate :
\begin{lemma}[{\cite[Corollary~2.4]{FFT12a}}]
\label{lem:k-ring}
Assuming that $\varphi$ achieves a global maximum at the repelling periodic point $\zeta$, of prime period $p$, and conditions \ref{item:U-ball} and \ref{item:repeller} hold, there exists $C>0$ depending only on $\vartheta$ given by property \ref{item:repeller} such that for any $s,\kappa\in\N$ and $u$ sufficiently close to $u_F=\varphi(\zeta)$ we have for $\kappa>0$
\begin{multline*}
\left|\p\big(\nn_{u,0}^{s+1}=\kappa\big)-s\left(\p(Q_{p,0}^{\kappa-1}(u))-\p(Q_{p,0}^\kappa(u))\right)\right|\\ \leq 4s\sum_{j=p+1}^s\p(Q_{p,0}^0(u)\cap \{X_j>u\})+2C\,\p(X_0>u_n),
\end{multline*}
and in the case $\kappa=0$
$$
\left|\p\big(\nn_{u,0}^{s+1}=0\big)-\left(1-s\p(Q_{p,0}^0(u))\right)\right|\leq 2s\sum_{j=p+1}^s\p(Q_{p,0}^0(u)\cap \{X_j>u\})+C\,\p(X_0>u).
$$
\end{lemma}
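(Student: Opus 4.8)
The idea is to exploit the cluster structure forced by condition~\ref{item:repeller}. Near the repelling periodic point $\zeta$, an exceedance at time $i$ can only be followed by further exceedances at the times $i+p,i+2p,\dots$, and more precisely $f^i(x)\in Q_{p,0}^{\kappa-1}(u)$ exactly when there are exceedances at $i,i+p,\dots,i+(\kappa-1)p$ and none at $i+\kappa p$. Hence every maximal ``run'' of exceedances inside the window $\{0,1,\dots,s\}$ has length equal to the ring index of its first point plus $1$, up to a possible truncation at the right endpoint $s$. Since $U(u)$ shrinks to $\zeta$, with very high probability the window contains at most one such run, so $\p(\nn_{u,0}^{s+1}\ge\kappa)$ should be close to the expected number of runs of length $\ge\kappa$; then
$\p(\nn_{u,0}^{s+1}=\kappa)=\p(\nn_{u,0}^{s+1}\ge\kappa)-\p(\nn_{u,0}^{s+1}\ge\kappa+1)$
telescopes into the asserted difference $s\bigl(\p(Q_{p,0}^{\kappa-1}(u))-\p(Q_{p,0}^{\kappa}(u))\bigr)$, while the case $\kappa=0$ follows from $\p(\nn_{u,0}^{s+1}=0)=1-\p(\nn_{u,0}^{s+1}\ge1)$, which is why that estimate carries only half the error.

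For $\kappa\ge1$ I would make this quantitative through the counter
\[
\mathcal F_\kappa:=\#\bigl\{\,0\le i\le s:\ f^i(x)\in U^{(\kappa-1)}(u)\ \text{and}\ \bigl(i<p\ \text{or}\ X_{i-p}\le u\bigr)\,\bigr\},
\]
which registers exactly the within-window run starts whose run would have length $\ge\kappa$ were it not cut off at $s$. Using invariance of $\p$ together with the identity $U(u)\cap f^{-p}\bigl(U^{(\kappa-1)}(u)\bigr)=U^{(\kappa)}(u)$, which is immediate from \eqref{eq:Uk-definition}, one computes
\[
\E[\mathcal F_\kappa]=p\,\p\bigl(U^{(\kappa-1)}(u)\bigr)+(s-p+1)\bigl(\p\bigl(U^{(\kappa-1)}(u)\bigr)-\p\bigl(U^{(\kappa)}(u)\bigr)\bigr)=s\,\p\bigl(Q_{p,0}^{\kappa-1}(u)\bigr)+O\!\bigl(p\,\p(X_0>u)\bigr).
\]
On the event that the window contains at most one run, $\{\nn_{u,0}^{s+1}\ge\kappa\}$ agrees, up to the truncation defect, with $\{\mathcal F_\kappa\ge1\}$; via the elementary inclusions $\{\mathcal F_\kappa\ge1\}\setminus\{\text{some run truncated}\}\subseteq\{\nn_{u,0}^{s+1}\ge\kappa\}\subseteq\{\mathcal F_\kappa\ge1\}\cup\{\text{at least two runs}\}$ one obtains $\p(\nn_{u,0}^{s+1}\ge\kappa)=\E[\mathcal F_\kappa]$ plus the error terms discussed below, and taking the difference for $\kappa$ and $\kappa+1$ produces the main term.

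Two error sources must then be controlled, both by soft arguments. First, the event that the window contains at least two runs (equivalently $\binom{\#\text{runs}}{2}\ge1$) forces a time $i'\le s$ with $f^{i'}(x)\in Q_{p,0}^{0}(u)$ followed by an exceedance at some $j$ with $i'+p+1\le j\le s$, the gap $\ge p+1$ being forced by \ref{item:repeller}; a union bound plus invariance then bounds the relevant expectations by $(s+1)\sum_{j=p+1}^s\p\bigl(Q_{p,0}^{0}(u)\cap\{X_j>u\}\bigr)$, which up to a numerical factor is the first term on the right-hand side, the constants $2$ and $4$ absorbing the combinatorics and the fact that for $\kappa\ge1$ one subtracts two such estimates. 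Second, a right-truncated run always produces a time $i''\in(s-p,s]$ with $f^{i''}(x)\in U^{(1)}(u)$, so its contribution is at most $p\,\p\bigl(U^{(1)}(u)\bigr)\le p\,\p(X_0>u)$; together with the $O(p\,\p(X_0>u))$ already present in $\E[\mathcal F_\kappa]$ and the $s$-versus-$(s+1)$ discrepancy, this is the $C\,\p(X_0>u)$ term, the constant controlled via \ref{item:repeller}. The main obstacle is precisely this boundary bookkeeping: one must make sure the union bound for ``two runs'' is not itself compromised by truncation, which rests on the observation that a right-truncated run is necessarily the last run in the window, so the two error mechanisms never act simultaneously; once that is pinned down, the rest is stationarity and the elementary set identities among the nested balls $U^{(\kappa)}(u)$.
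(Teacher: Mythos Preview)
The paper itself does not prove this lemma; it is quoted as \cite[Corollary~2.4]{FFT12a} and used as a black box in Appendix~\ref{sec:EI-multiplicity-formulas}, so there is no in-paper argument to compare against. Your sketch is the natural cluster-counting approach and is essentially correct: introduce a counter $\mathcal F_\kappa$ for within-window run starts landing in $U^{(\kappa-1)}$, compute $\E[\mathcal F_\kappa]$ exactly via stationarity and the identity $U\cap f^{-p}\bigl(U^{(\kappa-1)}\bigr)=U^{(\kappa)}$, relate $\p(\nn_{u,0}^{s+1}\ge\kappa)$ to $\E[\mathcal F_\kappa]$ through the two error mechanisms you name, and take differences. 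Your observation that a right-truncated run is necessarily the last one, so that truncation does not compromise the union bound for ``two runs'' (which only uses \emph{non-final} run ends, all of which lie in $Q_{p,0}^0$), is precisely what makes the bookkeeping close.

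Two refinements are worth recording. First, your set inclusions only compare $\p(\nn\ge\kappa)$ with $\p(\mathcal F_\kappa\ge1)$; to reach $\E[\mathcal F_\kappa]$ you also need
\[
\E[\mathcal F_\kappa]-\p(\mathcal F_\kappa\ge1)=\E[(\mathcal F_\kappa-1)_+]\le\E[(R-1)_+],
\]
where $R$ is the number of runs. Since each non-terminal run ends at a time $i''$ with $f^{i''}(x)\in Q_{p,0}^0(u)$ followed by an exceedance at distance $\ge p+1$, this is bounded by the same sum $\sum_{j>p}\p(Q_{p,0}^0\cap\{X_j>u\})$; this second use of the multi-run bound is what forces the factor $2$ (and then $4$ after differencing). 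You allude to this (``the constants $2$ and $4$ absorbing the combinatorics'') but it is a separate step from the inclusions and should be stated. Second, the boundary terms in your argument are all of size $O(p\,\p(X_0>u))$: both the truncation estimate $p\,\p(U^{(1)})$ and the remainder $\E[\mathcal F_\kappa]-s\,\p(Q_{p,0}^{\kappa-1})=\p(Q_{p,0}^{\kappa-1})+p\,\p(U^{(\kappa)})$ carry the period $p$. Thus the constant $C$ your proof produces depends on $p$ as well as on $\vartheta$, whereas the statement as quoted says $C$ depends only on $\vartheta$. For the applications in this paper the distinction is harmless, since $p$ is fixed with $\zeta$; if you want the constant exactly as stated you would need to consult the original formulation in \cite{FFT12a}.
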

The idea then is to realise that in the proof of Theorem~\ref{thm:convergence-point-process} one splits the first $n$ r.v. $X_0, \ldots, X_{n-1}$ into blocks of size $\lfloor n/k_n\rfloor$ with a time gap of size $t_n$ between them. Then using the asymptotic ``independence'' obtained from $D^p(u_n)^*$ and $D'_p(u_n)^*$ we get the compound Poisson limit with multiplicity distribution determined by the distributional limit of the number of exceedances in each block of size  $\lfloor n/k_n\rfloor$, given that at least one exceedance occurs. Hence, we need to compute, for all $\kappa\in\N$:
$\lim_{n\to\infty}\p\left(\nn_{u_n,0}^{\lfloor n/k_n\rfloor+1}=\kappa | \nn_{u_n,0}^{\lfloor n/k_n\rfloor+1}>0\right)$. Since, by $D'_p(u_n)^*$, we have that 
$\lfloor n/k_n\rfloor\sum_{j=p+1}^{\lfloor n/k_n\rfloor}\p(Q_{p,0}^0(u_n)\cap \{X_j>u_n\})=o(1/k_n)$, then by Lemma~\ref{lem:k-ring} we have that, for every $\kappa\in\N$,
\begin{equation}
\label{eq:multiplicity}
\pi(\kappa)=\lim_{n\to\infty}\p\left(\nn_{u_n,0}^{\lfloor n/k_n\rfloor+1}=\kappa | \nn_{u_n,0}^{\lfloor n/k_n\rfloor+1}>0\right)=\lim_{n\to\infty}\frac{
\left(\p(Q_{p,0}^{\kappa-1}(u_n))-\p(Q_{p,0}^\kappa(u_n))\right)}{
\p(Q_{p,0}^0(u_n))}.
\end{equation}

\bibliographystyle{novostyle}
\bibliography{RandomExtremes}

\begin{thebibliography}{FFT12a}
\providecommand{\bibinfo}[2]{#2}

\bibitem[AA03]{AA03}
\bibinfo{author}{J.~F. Alves} and \bibinfo{author}{V.~Ara{\'u}jo},
  \emph{\bibinfo{title}{Random perturbations of nonuniformly expanding maps}},
  \bibinfo{journal}{Ast\'erisque}  (\bibinfo{year}{2003}), no.
  \bibinfo{number}{286}, \bibinfo{pages}{xvii, 25--62},
  \bibinfo{note}{geometric methods in dynamics. I}.

\bibitem[AFL11]{AFL11}
\bibinfo{author}{J.~F. Alves}, \bibinfo{author}{J.~M. Freitas},
  \bibinfo{author}{S.~Luzzatto}, and \bibinfo{author}{S.~Vaienti},
  \emph{\bibinfo{title}{From rates of mixing to recurrence times via large
  deviations}}, \bibinfo{journal}{Adv. Math.} \textbf{\bibinfo{volume}{228}}
  (\bibinfo{year}{2011}), no.~\bibinfo{number}{2}, \bibinfo{pages}{1203--1236}.

\bibitem[BBM02]{BBM02}
\bibinfo{author}{V.~Baladi}, \bibinfo{author}{M.~Benedicks}, and
  \bibinfo{author}{V.~Maume-Deschamps}, \emph{\bibinfo{title}{Almost sure rates
  of mixing for i.i.d.\ unimodal maps}}, \bibinfo{journal}{Ann. Sci. \'Ecole
  Norm. Sup. (4)} \textbf{\bibinfo{volume}{35}} (\bibinfo{year}{2002}),
  no.~\bibinfo{number}{1}, \bibinfo{pages}{77--126}.

\bibitem[BBM03]{BBM03}
\bibinfo{author}{V.~Baladi}, \bibinfo{author}{M.~Benedicks}, and
  \bibinfo{author}{V.~Maume-Deschamps}, \emph{\bibinfo{title}{Corrigendum:
  ``{A}lmost sure rates of mixing for i.i.d.\ unimodal maps'' [{A}nn. {S}ci.
  \'{E}cole {N}orm. {S}up. (4) {\bf 35} (2002), no. 1, 77--126; {MR}1886006
  (2003d:37027)]}}, \bibinfo{journal}{Ann. Sci. \'Ecole Norm. Sup. (4)}
  \textbf{\bibinfo{volume}{36}} (\bibinfo{year}{2003}),
  no.~\bibinfo{number}{2}, \bibinfo{pages}{319--322}.

\bibitem[BG97]{BG97}
\bibinfo{author}{A.~Boyarsky} and \bibinfo{author}{P.~G{\'o}ra},
  \bibinfo{title}{Laws of chaos}, Probability and its Applications,
  \bibinfo{address}{Boston, MA}: \bibinfo{publisher}{Birkh\"auser Boston Inc.}
  (\bibinfo{year}{1997}), \bibinfo{note}{invariant measures and dynamical
  systems in one dimension}.

\bibitem[CCC09]{CCC09}
\bibinfo{author}{J.-R. Chazottes}, \bibinfo{author}{Z.~Coelho}, and
  \bibinfo{author}{P.~Collet}, \emph{\bibinfo{title}{Poisson processes for
  subsystems of finite type in symbolic dynamics}}, \bibinfo{journal}{Stoch.
  Dyn.} \textbf{\bibinfo{volume}{9}} (\bibinfo{year}{2009}),
  no.~\bibinfo{number}{3}, \bibinfo{pages}{393--422}.

\bibitem[D98]{D98}
\bibinfo{author}{D.~Dolgopyat}, \emph{\bibinfo{title}{On decay of correlations
  in {A}nosov flows}}, \bibinfo{journal}{Ann. of Math. (2)}
  \textbf{\bibinfo{volume}{147}} (\bibinfo{year}{1998}),
  no.~\bibinfo{number}{2}, \bibinfo{pages}{357--390}.

\bibitem[FP12]{FP12}
\bibinfo{author}{A.~Ferguson} and \bibinfo{author}{M.~Pollicott},
  \emph{\bibinfo{title}{Escape rates for {G}ibbs measures}},
  \bibinfo{journal}{Ergodic Theory Dynam. Systems}
  \textbf{\bibinfo{volume}{32}} (\bibinfo{year}{2012}),
  no.~\bibinfo{number}{3}, \bibinfo{pages}{961--988}.

\bibitem[FF08a]{FF08a}
\bibinfo{author}{A.~C.~M. Freitas} and \bibinfo{author}{J.~M. Freitas},
  \emph{\bibinfo{title}{On the link between dependence and independence in
  extreme value theory for dynamical systems}}, \bibinfo{journal}{Statist.
  Probab. Lett.} \textbf{\bibinfo{volume}{78}} (\bibinfo{year}{2008}),
  no.~\bibinfo{number}{9}, \bibinfo{pages}{1088--1093}.

\bibitem[FFT10]{FFT10}
\bibinfo{author}{A.~C.~M. Freitas}, \bibinfo{author}{J.~M. Freitas}, and
  \bibinfo{author}{M.~Todd}, \emph{\bibinfo{title}{Hitting time statistics and
  extreme value theory}}, \bibinfo{journal}{Probab. Theory Related Fields}
  \textbf{\bibinfo{volume}{147}} (\bibinfo{year}{2010}),
  no.~\bibinfo{number}{3}, \bibinfo{pages}{675--710}.

\bibitem[FFT11]{FFT11}
\bibinfo{author}{A.~C.~M. Freitas}, \bibinfo{author}{J.~M. Freitas}, and
  \bibinfo{author}{M.~Todd}, \emph{\bibinfo{title}{Extreme value laws in
  dynamical systems for non-smooth observations}}, \bibinfo{journal}{J. Stat.
  Phys.} \textbf{\bibinfo{volume}{142}} (\bibinfo{year}{2011}),
  \bibinfo{pages}{108--126}.

\bibitem[FFT12a]{FFT12a}
\bibinfo{author}{A.~C.~M. Freitas}, \bibinfo{author}{J.~M. Freitas}, and
  \bibinfo{author}{M.~Todd}, \emph{\bibinfo{title}{The compound {P}oisson limit
  ruling periodic extreme behaviour of non-uniformly hyperbolic dynamics}},
  \bibinfo{howpublished}{To appear in Comm. Math. Phys. (arXiv:1204.2304)}
  (\bibinfo{year}{2012}).

\bibitem[FFT12]{FFT12}
\bibinfo{author}{A.~C.~M. Freitas}, \bibinfo{author}{J.~M. Freitas}, and
  \bibinfo{author}{M.~Todd}, \emph{\bibinfo{title}{The extremal index, hitting
  time statistics and periodicity}}, \bibinfo{journal}{Adv. Math.}
  \textbf{\bibinfo{volume}{231}} (\bibinfo{year}{2012}),
  no.~\bibinfo{number}{5}, \bibinfo{pages}{2626 -- 2665}.

\bibitem[F12]{F12}
\bibinfo{author}{J.~M. Freitas}, \emph{\bibinfo{title}{Extremal behaviour of
  chaotic dynamics}}, \bibinfo{howpublished}{Preprint CMUP 2012-36}
  (\bibinfo{year}{2012}).

\bibitem[HLV05]{HLV05}
\bibinfo{author}{N.~Haydn}, \bibinfo{author}{Y.~Lacroix}, and
  \bibinfo{author}{S.~Vaienti}, \emph{\bibinfo{title}{Hitting and return times
  in ergodic dynamical systems}}, \bibinfo{journal}{Ann. Probab.}
  \textbf{\bibinfo{volume}{33}} (\bibinfo{year}{2005}),
  no.~\bibinfo{number}{5}, \bibinfo{pages}{2043--2050}.

\bibitem[HV09]{HV09}
\bibinfo{author}{N.~Haydn} and \bibinfo{author}{S.~Vaienti},
  \emph{\bibinfo{title}{The compound {P}oisson distribution and return times in
  dynamical systems}}, \bibinfo{journal}{Probab. Theory Related Fields}
  \textbf{\bibinfo{volume}{144}} (\bibinfo{year}{2009}), no.
  \bibinfo{number}{3-4}, \bibinfo{pages}{517--542}.

\bibitem[H93]{H93}
\bibinfo{author}{M.~Hirata}, \emph{\bibinfo{title}{Poisson law for {A}xiom {A}
  diffeomorphisms}}, \bibinfo{journal}{Ergodic Theory Dynam. Systems}
  \textbf{\bibinfo{volume}{13}} (\bibinfo{year}{1993}),
  no.~\bibinfo{number}{3}, \bibinfo{pages}{533--556}.

\bibitem[K86]{K86}
\bibinfo{author}{O.~Kallenberg}, \bibinfo{title}{Random measures},
  \bibinfo{address}{Berlin}: \bibinfo{publisher}{Akademie-Verlag},
  \bibinfo{edition}{fourth} edition (\bibinfo{year}{1986}).

\bibitem[K82]{K82}
\bibinfo{author}{G.~Keller}, \emph{\bibinfo{title}{Stochastic stability in some
  chaotic dynamical systems}}, \bibinfo{journal}{Monatsh. Math.}
  \textbf{\bibinfo{volume}{94}} (\bibinfo{year}{1982}),
  no.~\bibinfo{number}{4}, \bibinfo{pages}{313--333}.

\bibitem[K12]{K12}
\bibinfo{author}{G.~Keller}, \emph{\bibinfo{title}{Rare events, exponential
  hitting times and extremal indices via spectral perturbation{\^a}}},
  \bibinfo{journal}{Dynamical Systems} \textbf{\bibinfo{volume}{27}}
  (\bibinfo{year}{2012}), no.~\bibinfo{number}{1}, \bibinfo{pages}{11--27}.

\bibitem[KL09]{KL09}
\bibinfo{author}{G.~Keller} and \bibinfo{author}{C.~Liverani},
  \emph{\bibinfo{title}{Rare events, escape rates and quasistationarity: some
  exact formulae}}, \bibinfo{journal}{J. Stat. Phys.}
  \textbf{\bibinfo{volume}{135}} (\bibinfo{year}{2009}),
  no.~\bibinfo{number}{3}, \bibinfo{pages}{519--534}.

\bibitem[K86a]{K86a}
\bibinfo{author}{Y.~Kifer}, \bibinfo{title}{Ergodic theory of random
  transformations}, \emph{\bibinfo{series}{Progress in Probability and
  Statistics}}, volume~\bibinfo{volume}{10}, \bibinfo{address}{Boston, MA}:
  \bibinfo{publisher}{Birkh\"auser Boston Inc.} (\bibinfo{year}{1986}).

\bibitem[KL06]{KL06}
\bibinfo{author}{Y.~Kifer} and \bibinfo{author}{P.-D. Liu},
  \emph{\bibinfo{title}{Random dynamics}}, in \bibinfo{booktitle}{Handbook of
  dynamical systems. {V}ol. 1{B}}, \bibinfo{publisher}{Elsevier B. V.,
  Amsterdam} (\bibinfo{year}{2006}), \bibinfo{pages}{379--499}.

\bibitem[KR12]{KR12}
\bibinfo{author}{Y.~Kifer} and \bibinfo{author}{A.~Rapaport},
  \emph{\bibinfo{title}{Poisson and compound {P}oisson approximations in a
  nonconventional setup}}, \bibinfo{howpublished}{Preprint arXiv:1211.5238}
  (\bibinfo{year}{2012}).

\bibitem[L73]{L73}
\bibinfo{author}{M.~R. Leadbetter}, \emph{\bibinfo{title}{On extreme values in
  stationary sequences}}, \bibinfo{journal}{Z. Wahrscheinlichkeitstheorie und
  Verw. Gebiete} \textbf{\bibinfo{volume}{28}} (\bibinfo{year}{1973/74}),
  \bibinfo{pages}{289--303}.

\bibitem[MR11]{MR11}
\bibinfo{author}{P.~Marie} and \bibinfo{author}{J.~Rousseau},
  \emph{\bibinfo{title}{Recurrence for random dynamical systems}},
  \bibinfo{journal}{Discrete Contin. Dyn. Syst.} \textbf{\bibinfo{volume}{30}}
  (\bibinfo{year}{2011}), no.~\bibinfo{number}{1}, \bibinfo{pages}{1--16}.

\bibitem[R83]{R83}
\bibinfo{author}{M.~Rychlik}, \emph{\bibinfo{title}{Bounded variation and
  invariant measures}}, \bibinfo{journal}{Studia Math.}
  \textbf{\bibinfo{volume}{76}} (\bibinfo{year}{1983}),
  no.~\bibinfo{number}{1}, \bibinfo{pages}{69--80}.

\bibitem[S00]{S00}
\bibinfo{author}{B.~Saussol}, \emph{\bibinfo{title}{Absolutely continuous
  invariant measures for multidimensional expanding maps}},
  \bibinfo{journal}{Israel J. Math.} \textbf{\bibinfo{volume}{116}}
  (\bibinfo{year}{2000}), \bibinfo{pages}{223--248}.

\bibitem[S09]{S09}
\bibinfo{author}{B.~Saussol}, \emph{\bibinfo{title}{An introduction to
  quantitative {P}oincar\'e recurrence in dynamical systems}},
  \bibinfo{journal}{Rev. Math. Phys.} \textbf{\bibinfo{volume}{21}}
  (\bibinfo{year}{2009}), no.~\bibinfo{number}{8}, \bibinfo{pages}{949--979}.

\bibitem[V97]{V97}
\bibinfo{author}{M.~Viana}, \emph{\bibinfo{title}{Stochastic dynamics of
  deterministic systems}}, \bibinfo{howpublished}{Brazillian Math. Colloquium
  1997, IMPA.} (\bibinfo{year}{1997}).

\bibitem[W82]{W82}
\bibinfo{author}{P.~Walters}, \bibinfo{title}{An introduction to ergodic
  theory}, \emph{\bibinfo{series}{Graduate Texts in Mathematics}},
  volume~\bibinfo{volume}{79}, \bibinfo{address}{New York}:
  \bibinfo{publisher}{Springer-Verlag} (\bibinfo{year}{1982}).

\end{thebibliography}

\end{document}